\theoremstyle{definition}
\newtheorem{thm}{Theorem}[section]
\newtheorem{prop}[thm]{Proposition}
\newtheorem{lem}[thm]{Lemma}
\newtheorem{conj}[thm]{Conjecture}
\newtheorem{cor}[thm]{Corollary}
\theoremstyle{definition}
\newtheorem{defn}[thm]{Definition}
\newtheorem{exa}[thm]{Example}
\theoremstyle{remark}
\newtheorem*{remark}{Remark}
\theoremstyle{definition}
\theoremstyle{definition}
\newcommand{\Fer}{\mathrm{Fer}}
\newcommand{\eqdef}{\stackrel{\rm def}{=}}
\newcommand{\Aut}{\mathrm{Aut}}
\newcommand{\qc}{G(r,p,q,n)}
\newcommand{\Irr}{\mathrm{Irr}}
\newcommand{\GCD}{\mathrm{GCD}}
\newcommand{\ZZ}{\mathbb{Z}}
\newcommand{\CC}{\mathbb{C}}
\newcommand{\cV}{\mathcal{V}}
\newcommand{\cR}{\mathcal{R}}
\def\End{\mathrm{End}}
\def\({\left(}
\def\){\right)}
\def\modu{\ (\mathrm{mod}\ }
\newcommand\spanning{\textnormal{-span}}
\def\Ad{\mathrm{Ad}}
\def\cI{\mathcal{I}}
\def\cX{\mathcal{X}}
\def\ba{\begin{aligned}}
\def\ea{\end{aligned}}
\def\barr{\begin{array}}
\def\earr{\end{array}}
\def\Ind{\mathrm{Ind}}
\def\gcd{\GCD}
\def\Ind{\mathrm{Ind}}
  \def\Res{\mathrm{Res}}
  \newcommand{\sgn}{\mathrm{sgn}}
  \newcommand{\One}{{1\hspace{-.14cm} 1}}
  \numberwithin{equation}{section}
\author{Fabrizio Caselli and Eric Marberg}
\title[Isomorphisms, automorphisms, and generalized involution models]{Isomorphisms, automorphisms, and generalized involution models of projective reflection groups}
\begin{document}


\maketitle

\begin{abstract} 
We investigate the generalized involution models of the projective reflection groups $G(r,p,q,n)$. This  family of groups parametrizes all quotients of the complex reflection groups $G(r,p,n)$ by scalar subgroups. 
Our classification is ultimately incomplete, but we provide several necessary and sufficient conditions for generalized involution models to exist in various cases.
In the process we solve several intermediate problems concerning the structure of projective reflection groups. We derive a simple criterion for determining whether  two groups $G(r,p,q,n)$ and $G(r,p',q',n)$ are isomorphic.  
We also describe explicitly the form of all automorphisms of $G(r,p,q,n)$, outside a finite list of exceptional cases.
Building on prior work, this allows us to prove that  $G(r,p,1,n)$ has a generalized involution model if and only if $G(r,p,1,n) \cong G(r,1,p,n)$. 
We also  classify which groups $G(r,p,q,n)$ have generalized involution models when $n=2$, or $q$ is odd, or $n$ is odd.  
\end{abstract}

\tableofcontents

\section{Introduction}

A \emph{model} for a finite group $G$ is a set $\{ \lambda_i : H_i \to \CC\}$ of linear characters  of subgroups of $G$, such that the sum of induced characters
$ \sum_i \Ind_{H_i}^G(\lambda_i)$ is equal to the multiplicity-free sum of all irreducible characters $\sum_{\psi \in \Irr(G)} \psi$. 
Models are interesting because they lead to interesting representations in which  the irreducible representations of $G$ live. 
This is especially the case when the subgroups $H_i$ are taken to be the stabilizers of the orbits of some natural $G$-action.


\begin{exa}\label{exa2}
Let $G=G(r,n)$ be the group of complex $n\times n$ matrices with exactly one nonzero entry, given by an $r$th root of unity, in each row and column. Assume $r$ is odd. Then $G$ acts on its symmetric elements by $g : X \mapsto gXg^T$, and the distinct orbits of this action are represented by the block diagonal matrices of the form 
\[ X_i \eqdef \(\barr{ll} J_{2i} & 0  \\ 0 & I_{n-2i} \earr\),\] where $J_n$ denotes the $n\times n$ matrix with ones on the anti-diagonal and zeros elsewhere. 
Write $H_i$ for the stabilizer of $X_i$ in $G$. 
The elements of $H_i$ preserve the standard copy of $\CC^{2i} $ in $ \CC^n$, inducing a map $\pi_i : H_i \to \mathrm{GL}_{2i}(\CC)$. 
If 
$\lambda_i \eqdef \mathrm{det} \circ \pi_i$ then 
 $\{ \lambda_i : H_i \to \CC \}$ is a model for $G(r,n)$ \cite[Theorem 1.2]{APR}. 
\end{exa}

The following definition  of Bump and Ginzburg \cite{BG} captures the salient features of this example.
Let $\nu$ be an automorphism of $G$ with $\nu^2=1$. Then $G$ acts on the 
set of \emph{generalized involutions} 
\[\cI_{G,\nu} \eqdef \{ \omega \in G : \omega^{-1} = \nu(\omega)\}\] by the twisted conjugation $g: \omega \mapsto g\cdot \omega \cdot \nu(g)^{-1}$.  We write
\[ C_{G,\nu}(\omega) \eqdef \{ g\in G : g\cdot \omega \cdot \nu(g)^{-1} = \omega \}\]
to denote the stabilizer of $\omega \in \cI_{G,\nu}$ under this action, and 
 say that a model $\{ \lambda_i : H_i \to \CC\}$ is a  \emph{generalized involution model} (or \emph{GIM} for short) with respect to $\nu$ if each $H_i$ is the stabilizer  $C_{G,\nu}(\omega)$ of a generalized involution $\omega \in \cI_{G,\nu}$, with each twisted conjugacy class in $\cI_{G,\nu}$ contributing exactly one subgroup. The model in the example is a GIM with respect to the inverse transpose automorphism of $G(r,n)$. 

In \cite{Ma1,Ma}, the second author classified which finite complex reflection groups have GIMs. 
Subsequently, the first author discovered an interesting reformulation 
of this classification, which suggests that these results are most naturally interpreted in the broader context of \emph{projective reflection groups}.
These groups were introduced in \cite{Ca1} and  studied, for example, in \cite{BC3}. They include as an important special case an infinite series of groups $G(r,p,q,n)$ defined as follows. 

For positive integers $r,p,n$ 
with $p$ dividing $r$, let $G(r,p,n)$ denote the subgroup 
of $G(r,n)$ consisting of the matrices  whose nonzero entries, multiplied together, form an $(r/p)$th root of unity. 
Apart from thirty-four exceptions, the irreducible finite complex reflection groups are all groups $G(r,p,n)$ of this kind. The projective reflection group $G(r,p,q,n)$ is defined as  the quotient 
\[ G(r,p,q,n) \eqdef G(r,p,n) / C_q\] where $C_q$ is the cyclic subgroup of scalar $n\times n$ matrices of order $q$. Note that for this quotient to be well-defined we must have $C_q \subset G(r,p,n)$, which occurs precisely when $q$ divides $r$ and $pq$ divides $rn$. 
 Observe also that 
$G(r,n) = G(r,1,n)$ and $G(r,p,n) = G(r,p,1,n)$.

There is an interesting notion of duality for  projective reflection groups; by definition, the projective reflection group \emph{dual} to $G=G(r,p,q,n)$ is $G^* \eqdef G(r,q,p,n)$. 
 The starting point of the present collaboration is now the following theorem which reformulates the main result of \cite{Ma}.
 
\begin{thm}\label{thm1} The complex reflection group $G=G(r,p,1,n)$ has a GIM if and only if
 $G \cong G^*$; i.e., if and only if $G(r,p,1,n) \cong G(r,1,p,n)$.
\end{thm}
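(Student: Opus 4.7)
The plan is to prove Theorem~\ref{thm1} by reconciling two independent arithmetic descriptions. On one side, \cite{Ma} already classifies in closed form those triples $(r,p,n)$ for which $G(r,p,1,n)$ admits a GIM. On the other side, the theorem demands a characterization of when the abstract group isomorphism $G(r,p,1,n) \cong G(r,1,p,n)$ holds. If both conditions turn out to be described by the same arithmetic constraints on $(r,p,n)$, Theorem~\ref{thm1} follows immediately. So the proof reduces to (i) quoting the GIM classification, (ii) deriving the needed special case of an isomorphism criterion, and (iii) comparing the two lists.

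First I would state verbatim the main theorem of \cite{Ma}: it gives an explicit list of divisibility conditions on $(r,p,n)$ that is necessary and sufficient for $G(r,p,1,n)$ to carry a GIM. This step is purely citational and contributes no new content; it only pins down the combinatorial target that must be matched by the isomorphism condition.

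The substantive step is to decide when the concrete groups $G(r,p,n)$ and $G(r,n)/C_p$ are isomorphic as abstract groups, which is the $q=1$ versus $q=p$ case of the general isomorphism criterion for $G(r,p,q,n)$ promised in the abstract. My approach would be to analyze what an abstract isomorphism must do to the canonical normal subgroup of diagonal matrices and to the image of the symmetric group, using that these are detected group-theoretically (as a suitable abelian normal subgroup and a complement, up to the usual low-rank exceptions $n \le 2$). This forces the isomorphism to be, up to an automorphism of the diagonal subgroup, a parameter-matching map, and out of this falls an arithmetic condition on $(r,p,n)$ for $G(r,p,1,n) \cong G(r,1,p,n)$. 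The main obstacle lies here: one must rule out exotic isomorphisms that do not respect the standard generators, which is exactly what the paper's later sections on automorphisms of $G(r,p,q,n)$ are designed to handle. A clean treatment will require separating out the small exceptional $(r,p,n)$ before handing them back individually.

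With both arithmetic conditions in hand, the final step is a direct comparison. Since both the GIM classification of \cite{Ma} and the isomorphism criterion above are parametrized purely by $(r,p,n)$ with no hidden data, showing they define the same set of triples is an elementary verification in number theory. I would organize it as a two-way implication, checking for each family appearing in \cite{Ma} that the isomorphism holds via an explicit identification, and conversely, that any $(r,p,n)$ satisfying the isomorphism condition appears in the GIM list. Modulo the isomorphism criterion, this is routine and contains no further ideas.
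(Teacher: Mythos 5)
Your overall reduction is exactly the paper's: Theorem \ref{thm1} is deduced by quoting \cite[Theorem 5.2]{Ma} (GIM iff $n\neq 2$ and $\gcd(p,n)=1$, or $n=2$ and $p$ or $r/p$ odd), proving the isomorphism criterion of Theorem \ref{thm2}, and matching the two conditions. The gap is in your treatment of the key ingredient, the criterion for $G(r,p,1,n)\cong G(r,1,p,n)$, which you only sketch, and the sketched method breaks precisely where the theorem is delicate. Your rigidity claim --- that an abstract isomorphism must carry diagonal subgroup to diagonal subgroup and then be ``parameter-matching'' up to an automorphism of that subgroup --- cannot be correct as stated at $n=2$: there $G(r,p,1,2)\cong G(r,1,p,2)$ holds whenever $p$ or $r/p$ is odd, even though $\gcd(p,2)\neq\gcd(1,2)$ when $p$ is even, and the isomorphism realizing this is the genuinely non-obvious map $(\pi;i,j)\mapsto(\pi;i,j+di)$ of Lemma \ref{coprime}; conversely, when $p$ and $r/p$ are both even one must \emph{disprove} isomorphism, which the paper does by counting $(r/p)$-th powers (Lemmas \ref{npnq} and \ref{1odd}). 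This $n=2$ regime is an infinite family, not a finite list of small exceptions to be handled individually, and it is exactly the second clause of \cite[Theorem 5.2]{Ma}, so Theorem \ref{thm1} cannot be closed without it. Your parenthetical that the diagonal subgroup is group-theoretically detected ``up to the usual low-rank exceptions $n\le 2$'' is also inaccurate: Proposition \ref{characteristic-prop} exhibits failures at $n=3$ and $n=4$ (e.g.\ $(3,3,1,3)$, $(2,2,1,4)$), and $n=6$ brings in the outer automorphism of $S_6$.

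Even for $n\geq 3$ the plan leaves the real work undone in both directions. For necessity the paper does not argue via rigidity of isomorphisms at all; it compares two computable invariants, $|Z(G)|=\frac{r}{pq}\GCD(p,n)$ and $|G/[G,G]|=\frac{2r}{pq}\GCD(q,n)$, which instantly force $\GCD(p,n)=1$ from $G(r,p,1,n)\cong G(r,1,p,n)$ --- a much lighter tool than controlling all isomorphisms, and one that sidesteps the ``exotic isomorphism'' problem you flag but do not resolve. For sufficiency, ``an explicit identification'' is not routine: the common-representative isomorphism of Proposition \ref{suff} applies to the pair $(p,1)$ versus $(1,p)$ only when $\GCD(rn/p,p)=1$, which can fail even with $\gcd(p,n)=1$ (e.g.\ $r=4$, $p=2$, $n=3$); the paper needs the decomposition $G\cong G(r,\delta p,q,n)\times\ZZ_\delta$ of Proposition \ref{dec} (or the CRT-based homomorphism of Section \ref{iso3-sect}) to cover the general case. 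So while your step (iii) comparison is indeed immediate once Theorem \ref{thm2} is available, steps (ii) as proposed would not produce it.
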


\begin{remark} Explicitly, $G$ has a GIM if and only if (i)  $n\neq 2$ and $\gcd(p,n) = 1$ or (ii) $n=2$ and either $p$ or $r/p$ is odd; this is the statement of \cite[Theorem 5.2]{Ma}.
\end{remark}

Deducing this theorem from \cite[Theorem 5.2]{Ma} is straightforward, given our next main result.
Let $r,n$ be positive integers and let $p,p',q,q'$ be positive divisors of $r$ such that $pq=p'q'$  divides $rn$.
The following result simplifies and extends \cite[Theorem 4.4]{Ca1}; its proof occupies Sections  \ref{iso1-sect}, \ref{iso2-sect} and \ref{iso3-sect}.

\begin{thm}\label{thm2} The projective reflection groups $G(r,p,q,n)$ and $G(r,p',q',n)$ are isomorphic if and only if either 
(i) $ \gcd(p,n) = \gcd(p',n)$ and $\gcd(q,n) = \gcd(q',n)$
or
 %
(ii) $n=2$ and the numbers $p+p'$ and $q+q'$ and $\frac{r}{pq}$ are all odd integers.

\end{thm}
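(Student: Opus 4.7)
\emph{Proof plan.} I would prove the two implications separately. Since $|G(r,p,q,n)| = r^n n!/(pq)$ depends on $(p,q)$ only through the product $pq$, the condition $pq = p'q'$ built into the hypothesis is automatic; the real question is which pairs $(p,q)$ with a fixed product yield isomorphic groups.

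For the ``if'' direction I would construct explicit isomorphisms. The main ingredient is a family of ``power maps'' on $G(r,n)$: for $a$ coprime to $r$, raising each nonzero entry of a matrix to the $a$-th power defines an automorphism of $G(r,n)$. Such a map restricts to an isomorphism $G(r,p,n) \to G(r,\tilde p,n)$ where $\tilde p$ depends on $a$ and $p$, and it descends to the projective quotients provided the scalar subgroups $C_q$ and $C_{q'}$ correspond correctly. A careful choice of $a$ (via the Chinese Remainder Theorem applied to the prime factorisations of $r$ and $n$) together with a ``determinantal twist'' that multiplies a distinguished nonzero entry by a root of unity depending on the product of all entries should produce, under hypothesis (i), an isomorphism $G(r,p,q,n) \to G(r,p',q',n)$. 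The exceptional case $n=2$ of (ii) must be handled separately: for $n=2$ the groups are small enough that extra isomorphisms arise using the sign character of $S_2$, and the parity conditions on $p+p'$, $q+q'$, and $r/(pq)$ are precisely those making such an ad hoc construction close up.

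For the ``only if'' direction I would recover $\gcd(p,n)$ and $\gcd(q,n)$ from abstract invariants of $G(r,p,q,n)$. Natural candidates are the center and the abelianization. For $n \geq 3$, the center of $G(r,p,n)$ is exactly its scalar subgroup, and a direct computation should show that $|Z(G(r,p,q,n))|$ depends on $(p,q)$ only through $\gcd(p,n)$ (together with the known quantities $r$, $n$, $pq$). Meanwhile the abelianization decomposes as a cyclic ``determinant'' factor whose size is controlled by $\gcd(q,n)$ and a $\mathbb{Z}/2$ sign factor, giving access to $\gcd(q,n)$. The case $n=2$ admits extra isomorphisms and must be analysed separately, producing condition (ii). The principal obstacle will be the ``if'' direction: arranging for the power map plus determinantal twist to be a genuine group homomorphism that carries $G(r,p,n)$ onto $G(r,p',n)$ and the scalar subgroup $C_q$ onto $C_{q'}$ requires delicate bookkeeping with the conditions ``product of nonzero entries in $\mu_{r/p}$'' and ``modulo $C_q$'', and the constructions for $n=2$ in (ii) require a separate bespoke argument.
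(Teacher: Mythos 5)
Your ``only if'' direction for $n\geq 3$ is essentially the paper's: the center of $G(r,p,q,n)$ has order $\frac{r}{pq}\gcd(p,n)$ and the abelianization has order $\frac{2r}{pq}\gcd(q,n)$, so with $pq=p'q'$ fixed these two invariants recover $\gcd(p,n)$ and $\gcd(q,n)$. But the ``if'' direction, as you describe it, rests on a mechanism that does not work. A power map $(\pi,x)\mapsto(\pi,ax)$ with $\gcd(a,r)=1$ multiplies $\Delta(g)$ by a unit mod $r$, hence maps $G(r,p,n)$ onto \emph{itself}: it never changes $p$, so no choice of $a$ can carry $G(r,p,q,n)$ toward $G(r,p',q',n)$. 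And the proposed ``determinantal twist'' that multiplies a single distinguished nonzero entry by a root of unity depending on $\Delta(g)$ is not a homomorphism for $n\geq 2$: writing the twist as $g\mapsto g\,t^{f(\Delta(g))}$ with $t$ diagonal and supported at one position, one has $\varphi(g)\varphi(h)=gh\,(h^{-1}t^{f(\Delta(g))}h)\,t^{f(\Delta(h))}$, and conjugation by $h$ moves the distinguished position, so this differs from $\varphi(gh)$ unless the twist is trivial. The twist must be by a \emph{central} element: the paper's explicit isomorphism is $g\mapsto g\cdot c^{\frac{\Delta(g)}{p}x}$ with $c=\zeta_r I_n$ and $x$ chosen by a CRT argument prime by prime (Lemmas \ref{=><-lem} and \ref{system}); the paper's primary proof instead splits off a cyclic direct factor, $G(r,p,q,n)\cong G(r,\delta p,q,n)\times\ZZ_\delta$ with $\delta$ the product of ``special'' primes (Proposition \ref{dec}), and then applies the common-representative criterion $\GCD(\frac{rn}{q},p')=\GCD(\frac{rn}{q'},p)$ of Proposition \ref{suff}. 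Your plan contains neither the centrality correction nor the precise divisibility conditions the CRT choice must satisfy, and these are exactly where the work lies.

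The second genuine gap is the case $n=2$, which you defer entirely to an unspecified ``bespoke argument.'' Both halves there require new ideas. For the positive half one needs an isomorphism $G(r,pq,1,2)\cong G(r,1,pq,2)$ when $\frac{r}{pq}$ is odd even though $\gcd$-conditions fail; the paper constructs it explicitly as $(\pi;i,j)\mapsto(\pi;i,j+di)$ for a suitable $d$ (Lemma \ref{coprime}), after reducing to the case where $\frac{r}{pq}$ and $pq$ are coprime. For the negative half, the center/abelianization invariants you rely on no longer separate the groups when $n=2$; the paper instead distinguishes them by counting the sets $\{g^{r/p}:g\in G\}$ of $(r/p)$-th powers (Lemmas \ref{npnq} and \ref{1odd}), e.g.\ showing these sets have sizes $p$ versus $p+1$ for $G(r,p,1,2)$ versus $G(r,1,p,2)$ when $p$ and $r/p$ are both even. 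Without some such invariant and some such construction, condition (ii) is not established in either direction, so as it stands the proposal is an outline whose $n\neq 2$ ``if'' mechanism is flawed and whose rank-two analysis is missing.
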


As a corollary, we can say precisely when the group $G(r,p,q,n)$ is ``self-dual'' as in Theorem \ref{thm1}.

\begin{cor}\label{cor2} The projective reflection group $G=G(r,p,q,n)$ is isomorphic to its dual $G^*=G(r,q,p,n)$ if and only if either (i) $\gcd(p,n) = \gcd(q,n)$ or (ii) $n=2$ and 
$\frac{r}{pq}$ is an odd integer.
\end{cor}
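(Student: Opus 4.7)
The plan is to derive the corollary directly from Theorem \ref{thm2} by setting $p'=q$ and $q'=p$, then simplifying the resulting disjunction. Under this substitution, case (i) of Theorem \ref{thm2} reads $\gcd(p,n) = \gcd(q,n)$ and $\gcd(q,n) = \gcd(p,n)$, which collapses to the single condition that appears as case (i) of the corollary. Case (ii) of Theorem \ref{thm2} reads $n=2$ together with the oddness of $p+q$, $q+p$, and $\frac{r}{pq}$; since the first two coincide, this becomes $n=2$, $p+q$ odd, and $\frac{r}{pq}$ odd. So the task reduces to showing that the disjunction ``(i) or (ii)'' from the corollary is equivalent to the disjunction coming from Theorem \ref{thm2}.

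For the direction that assumes one of the conditions in Theorem \ref{thm2}, the argument is immediate: if case (i) of Theorem \ref{thm2} holds, then case (i) of the corollary holds; if case (ii) of Theorem \ref{thm2} holds, then in particular $n=2$ and $\frac{r}{pq}$ is odd, which is case (ii) of the corollary.

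For the reverse direction, I would perform a small parity analysis in the $n=2$ situation. If case (i) of the corollary holds, then case (i) of Theorem \ref{thm2} applies verbatim. If case (ii) of the corollary holds, so that $n=2$ and $\frac{r}{pq}$ is odd, I split on the parities of $p$ and $q$. When $p$ and $q$ share the same parity, $\gcd(p,2) = \gcd(q,2)$, so case (i) of Theorem \ref{thm2} is satisfied. When $p$ and $q$ have opposite parities, $p+q$ is odd, and combining this with the hypothesis that $\frac{r}{pq}$ is odd gives case (ii) of Theorem \ref{thm2}.

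The only real point to notice, and the mildest of obstacles, is that the corollary's statement of case (ii) looks weaker than case (ii) of Theorem \ref{thm2} because the ``$p+q$ odd'' hypothesis has been dropped; the parity split above explains why this hypothesis is redundant once case (i) is also available. No deeper input is needed beyond Theorem \ref{thm2}.
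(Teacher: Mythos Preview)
Your proposal is correct and is exactly the intended argument: the paper states this result as an immediate corollary of Theorem~\ref{thm2} with no separate proof, and your specialization $p'=q$, $q'=p$ together with the parity split at $n=2$ is precisely the routine simplification needed to pass from Theorem~\ref{thm2} to the corollary.
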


On seeing Theorem \ref{thm1} one naturally asks whether for arbitrary projective reflection groups the property of having a GIM is equivalent to self-duality. Theorem \ref{thm2} allows us to attack this question directly; its answer turns out to be false, and the rest of our results are devoted to clarifying which groups $G(r,p,q,n)$ have GIMs.
The following theorem, proved in Section \ref{n=2-sect} below, completely solves this problem in the often pathological case $n=2$.

\begin{thm}\label{n=2-thm} 
The projective reflection group
$G(r,p,q,2)$  has a GIM if and only if $(r,p,q) = (4,1,2)$ or
$G(r,p,q,2) \cong G(r,q,p,2)$.
\end{thm}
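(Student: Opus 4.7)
The plan is to combine the isomorphism criterion of Theorem \ref{thm2} with the $q=1$ classification of Theorem \ref{thm1} to dispatch most cases of Theorem \ref{n=2-thm}, and to treat the genuinely projective cases by hand. Because having a GIM is an isomorphism invariant, I would first enumerate the isomorphism classes of groups $G(r,p,q,2)$ with $r$ and $pq$ held fixed according to the parities of $(p,q)$, noting that clause (ii) of Theorem \ref{thm2} merges the two mixed-parity classes exactly when $r/pq$ is odd. In parallel, Corollary \ref{cor2} tells us that self-duality at $n=2$ amounts to either $p$ and $q$ having the same parity or $r/pq$ being odd, which is the condition I ultimately want to match.

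The first reduction handles every triple $(r,p,q)$ for which $G(r,p,q,2)$ is isomorphic to some complex reflection group $G(r,p',1,2)$. By Theorem \ref{thm2} this happens whenever $q$ is odd (take $(p',q')=(p,1)$ in clause (i)), or whenever $q$ is even, $p$ is odd and $r/pq$ is odd (swap both parities via clause (ii)). In each such case, Theorem \ref{thm1} together with Corollary \ref{cor2} immediately yields that $G$ has a GIM if and only if $G \cong G^*$, so the statement of Theorem \ref{n=2-thm} follows. A direct parity check verifies that these reductions cover every situation except two ``genuinely projective'' families.

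What remains are two cases. First, when $p$ and $q$ are both even and $r/pq$ is even, $G(r,p,q,2)$ is automatically self-dual by Corollary \ref{cor2}, and the theorem predicts a GIM; the plan is to construct one by descending a GIM of the covering complex reflection group $G(r,p,2)$ supplied by \cite{Ma}, showing that a suitable choice of representatives and linear characters is invariant under the scalar subgroup $C_q$ and hence factors through the quotient. Second, when $p$ is odd, $q$ is even, and $r/pq$ is even, the group is not self-dual; here the aim is to rule out a GIM by checking the necessary numerical condition $|\cI_{G,\nu}| = \sum_{\psi \in \Irr(G)} \psi(1)$ for every involutive $\nu \in \Aut(G)$. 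The explicit description of $\Aut(G(r,p,q,n))$ obtained elsewhere in the paper lets us enumerate the candidate $\nu$, and a character-sum computation is expected to fail uniformly except at the single triple $(r,p,q)=(4,1,2)$.

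The main obstacle will be the exceptional case $G(4,1,2,2)$, which has order $16$ and for which the numerical nonexistence argument breaks down. Here I would resort to direct computation: describe the group concretely, identify an involutive automorphism $\nu$, and exhibit a GIM by selecting one representative per twisted conjugacy class in $\cI_{G,\nu}$ together with compatible linear characters on their stabilizers. Once this exception is verified and the uniform nonexistence argument is shown to succeed everywhere else in the (odd, even) family, the stated equivalence follows.
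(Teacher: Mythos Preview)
Your reduction to complex reflection groups via Theorems \ref{thm1} and \ref{thm2} is sound, but both of the arguments you propose for the remaining families break down.

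For $p$ and $q$ both even, you plan to descend a GIM from the cover $G(r,p,2)$. But by the remark after Theorem \ref{thm1}, the group $G(r,p,2)$ has a GIM only when $p$ or $r/p$ is odd; when both are even---for instance $(r,p,q)=(4,2,2)$---there is no GIM upstairs to descend, yet $G(4,2,2,2)$ is self-dual and the theorem asserts it has one. The paper's own descent tool (Proposition \ref{quotient-prop}, as applied in Corollary \ref{part1-cor}) also requires $q$ odd and is unavailable here. What the paper does instead is reduce to $p=2$ via Theorem \ref{thm2} and then construct GIMs for $G(r,2,q,2)$ with $q$ even by hand, in three separate subcases, writing down explicit linear characters on the $\tau$-twisted centralizers and verifying by Frobenius reciprocity and Theorem \ref{f} that the induced sum is the Gelfand character.

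For $p$ odd, $q$ even, $r/pq$ even, your plan is to show that the count $|\cI_{G,\nu}|=\sum_{\psi}\psi(1)$ fails for every involutive $\nu$. This cannot work: since $\gcd(p,2)\leq 2$ trivially, Theorem \ref{f} gives equality for $\nu=\tau$ in \emph{every} rank-two group, so the numerical obstruction never fires. (Note also that the automorphism description in Theorem \ref{vN=N} is proved only for $n\geq 3$.) The paper instead first shows directly (Lemma \ref{no-class-lem}) that $G(r,p,q,2)$ has no class-preserving outer automorphisms, so a GIM exists iff one exists with respect to $\tau$; then, after reducing to $p=1$, it exhibits (Lemma \ref{q3}) a specific generalized involution whose $\tau$-twisted centralizer is isomorphic to $\ZZ_2\times\ZZ_2$ and shows by Frobenius reciprocity that each of its four linear characters induces with a repeated constituent---except precisely when $(r,q)=(4,2)$.
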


\begin{remark} By Theorem \ref{thm2}, the  condition $G(r,p,q,2) \cong G(r,q,p,2)$ holds if and only if  (i) $p$ and $q$ have the same parity or (ii) 
 $\frac{r}{pq}$ is an odd integer.
\end{remark}

A few notable differences between complex reflection groups and projective reflection groups complicates  the task of determining the existence of GIMs, and in the case $n\neq 2$ our classification is incomplete.
For example, the groups $G(r,p,q,n)$  occasionally can have conjugacy class-preserving outer automorphisms (see Proposition \ref{no-class-prop} below). The fact that the groups $G(r,p,n)$  never have such automorphisms   \cite[Proposition 3.1]{MM} was the source of a significant reduction in the proof of \cite[Theorem 5.1]{Ma} which is no longer available in many cases of interest.
Nevertheless, by carrying out a detailed analysis of the conjugacy classes and automorphisms of $G(r,p,q,n)$, we are able to prove the following theorem.

\begin{thm}\label{thm3} 
Let $G=G(r,p,q,n)$ and assume $n \neq 2$.
\begin{enumerate}
\item[(1)] If $\gcd(p,n) = 1$ then $G$ has a GIM if $q$ or $n$ is odd.
\item[(2)] If $\gcd(p,n) = 2$ then $G$ has a GIM only if $q$ is even.
\item[(3)] If $\gcd(p,n) = 3$ then $G$ has a GIM if and only if $(r,p,q,n)$ is 
    \[ (3,3,3,3) \text{ or } (6,3,3,3) \text{ or } (6,6,3,3) \text{ or }(6,3,6,3).\]

\item[(4)] If $\gcd(p,n) = 4$ then $G$ has a GIM only if 
$r \equiv p \equiv q \equiv n \equiv 4 \modu 8).$

\item[(5)] If $\gcd(p,n) \geq 5$ then $G$ does not have a GIM.

\end{enumerate}
\end{thm}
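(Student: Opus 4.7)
The strategy combines two tools. First, for any GIM with respect to an involutive automorphism $\nu$ one has the numerical identity
\[ \sum_{\chi \in \Irr(G)} \chi(1) = |\cI_{G,\nu}|, \]
so it suffices to enumerate the involutive automorphisms of $G = G(r,p,q,n)$ and compute $|\cI_{G,\nu}|$ for each. Second, since $G(r,p,q,n) = G(r,p,n)/C_q$, a GIM on $G(r,p,n)$ whose stabilizer subgroups all contain $C_q$ and whose linear characters restrict trivially to $C_q$ descends to a GIM on $G(r,p,q,n)$. The irreducible characters of $G(r,p,q,n)$ are precisely those irreducibles of $G(r,p,n)$ that are trivial on $C_q$, which gives explicit control of $\sum_\chi \chi(1)$ via the wreath product structure.

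For the sufficiency halves of (1) and (3), I would proceed as follows. When $\gcd(p,n)=1$, Theorem \ref{thm1} supplies a GIM on $G(r,p,n)$ analogous to the one of Example \ref{exa2}; when $q$ or $n$ is odd, this construction can be refined so that $C_q$ lies inside every stabilizer $C_{G,\nu}(\omega)$ and each linear character is trivial on $C_q$. The odd parity is precisely what allows the natural involution representatives to be rescaled by elements of $C_q$ while remaining in $G(r,p,n)$. For the four exceptional quadruples in part (3), Corollary \ref{cor2} yields $G \cong G^*$, and one verifies the existence of a GIM directly for each of these small groups, either by explicit construction or by confirming the numerical equality $\sum_\chi \chi(1) = |\cI_{G,\nu}|$ and invoking the structural existence criteria from \cite{Ma1}.

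For the necessity parts of (1)--(5), the plan is to use the automorphism classification developed earlier in the paper, together with Proposition \ref{no-class-prop} identifying the conjugacy-class-preserving outer automorphisms, to list every candidate involutive automorphism $\nu$ of $G$; for each such $\nu$ compute $|\cI_{G,\nu}|$ using the explicit centralizer formulas for $G(r,p,q,n)$; and compare against $\sum_\chi \chi(1)$. In each excluded case one shows that $|\cI_{G,\nu}| < \sum_\chi \chi(1)$ for every admissible $\nu$, ruling out a GIM. The divisibility obstructions associated to $\gcd(p,n) \in \{2,3,4,\geq 5\}$ enter through the explicit involution counts and through which outer automorphisms are involutive; in particular, for $\gcd(p,n)\geq 5$ the gap is large enough that no coincidences can occur.

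The main obstacle is the sporadic case (3), where the exceptional quadruples $(3,3,3,3)$, $(6,3,3,3)$, $(6,6,3,3)$, $(6,3,6,3)$ arise from accidental numerical coincidences; producing a uniform argument that these are the \emph{only} such coincidences requires sharp bounds on both sides of the identity together with a finite case check at small parameters, which rules out arbitrarily large quadruples with $\gcd(p,n)=3$ and leaves only finitely many to inspect. A secondary difficulty is the precise congruence $r \equiv p \equiv q \equiv n \equiv 4 \pmod 8$ appearing in part (4), which demands careful tracking of how the quotient by $C_q$ interacts with the $2$-Sylow structure of $G(r,p,n)$ and with the involutive outer automorphisms afforded by Proposition \ref{no-class-prop}.
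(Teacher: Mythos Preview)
Your overall architecture---reduce to the inverse-transpose automorphism and compare $|\cI_{G,\nu}|$ with $\sum_\chi \chi(1)$---matches the paper for parts (3)--(5), and your descent idea for part (1) is exactly what the paper does via Proposition~\ref{quotient-prop}. However, there is a genuine gap in your treatment of part (2).

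Your plan for the necessity assertions is to show that $|\cI_{G,\nu}| < \sum_{\chi}\chi(1)$ for every admissible $\nu$. This cannot work when $\gcd(p,n)=2$: Theorem~\ref{f} asserts that $|\cI_{G,\tau}| = \sum_\chi \chi(1)$ precisely when $\gcd(p,n)\le 2$ (or in the $4\pmod 8$ case), so for $\gcd(p,n)=2$ the numerical obstruction vanishes and you cannot rule out a GIM by counting alone. The paper proves part (2) by entirely different, structural arguments (Proposition~\ref{part2-prop}): when $r/p$ is even one shows that the central element $c^{r/2}$ lies in the commutator subgroup of every $\tau$-twisted centralizer, so every candidate induced character kills $c^{r/2}$, contradicting faithfulness of the Gelfand model; when $r/p$ is odd one exhibits two distinct $\tau$-twisted conjugacy classes whose centralizers are conjugate copies of $G(2,2,n)$ and invokes Baddeley's results on involution models of Weyl groups. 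Neither of these fits into your counting framework, and you would need some replacement.

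Two smaller remarks on parts (3)--(5). First, rather than enumerating all involutive automorphisms, the paper uses the Bump--Ginzburg criterion that a GIM automorphism must satisfy $\nu(h)\sim h^{-1}$ for all $h$; Lemma~\ref{vconginv} then pins $\nu$ down to $\Ad(g)\circ\tau$ outside a short list of exceptions. Second, instead of computing $|\cI_{G,\nu}|$ separately for each such $\nu$, the paper constructs a single injection $\cI_{G,\nu}\hookrightarrow\cI_{G,\tau}$ and then appeals once to Theorem~\ref{f}. Your approach would eventually get there but with more work. Finally, for the four exceptional quadruples in part (3), self-duality is a red herring: Theorem~\ref{thm1} concerns only $q=1$, and there is no general implication from $G\cong G^*$ to the existence of a GIM (indeed, producing such an implication is one of the open questions in Section~\ref{last-sect}). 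The paper simply verifies those four cases by direct computation.
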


In arriving at this result, we prove a useful criterion for determining conjugacy in $G(r,p,n)$ and give an explicit description of  the automorphism group of $G(r,p,q,n)$; see Proposition \ref{grpnconjugacy} and Theorem \ref{vN=N} below.
Parts (1) and (2) of this theorem are proved as Corollary \ref{part1-cor} and Proposition \ref{part2-prop}, while parts (3)-(5) comprise Theorem \ref{main-thm}. 
We note as a corollary that the theorem provides  a complete classification when $q$ or $n$ is odd.  
This shows that projective reflection groups which are not self-dual may still possess GIMs.

\begin{cor} Let $G=G(r,p,q,n)$ and assume $n \neq 2$ and $(r,p,q,n)$ is not one of the four exceptions $(3,3,3,3)$ or $(6,3,3,3)$ or $(6,6,3,3)$ or $(6,3,6,3)$.
 If $q$ or $n$ is odd, then $G$ has a GIM if and only if $\gcd(p,n) = 1$.
\end{cor}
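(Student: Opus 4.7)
The plan is to deduce the corollary directly from Theorem \ref{thm3} by a short case analysis on $d \eqdef \gcd(p,n)$, using the hypothesis that $q$ or $n$ is odd to rule out all values of $d$ except $d=1$.

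First, the forward implication is immediate. If $\gcd(p,n) = 1$, then by part (1) of Theorem \ref{thm3}, the hypothesis that $q$ or $n$ is odd guarantees that $G$ has a GIM. No further work is needed here, so the substance of the proof is the reverse implication.

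For the converse, I assume that $G = G(r,p,q,n)$ has a GIM, that $n \neq 2$, that $(r,p,q,n)$ is none of the four exceptions listed, and that $q$ or $n$ is odd; I must show $d = 1$. I will eliminate the possibilities $d=2$, $d=3$, $d=4$ and $d\geq 5$ one by one using the corresponding parts of Theorem \ref{thm3}. If $d = 2$, then $n$ is even, and part (2) forces $q$ to be even as well, contradicting the assumption that one of $q,n$ is odd. If $d = 3$, then part (3) forces $(r,p,q,n)$ to lie in the prescribed list of four tuples, which is excluded by hypothesis. If $d = 4$, then $n$ is a multiple of $4$ hence even, and part (4) forces $q \equiv 4 \pmod 8$ so $q$ is also even, again contradicting the assumption. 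Finally, if $d \geq 5$, part (5) rules out the existence of any GIM. Hence $d = 1$.

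There is no real obstacle here: the corollary is essentially a bookkeeping consequence of Theorem \ref{thm3}, whose parts (1)--(5) together cover all values of $\gcd(p,n)$ so that the case analysis is exhaustive. The only subtlety to verify is the consistency of the exceptional list in part (3) with the hypothesis that $q$ or $n$ is odd; a glance at the four tuples $(3,3,3,3)$, $(6,3,3,3)$, $(6,6,3,3)$, $(6,3,6,3)$ shows that each has $n = 3$ odd, which is why they genuinely need to be excluded from the statement of the corollary and are not vacuously ruled out by the parity hypothesis.
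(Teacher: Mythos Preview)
Your proof is correct and is precisely the intended derivation: the paper does not give a separate proof of this corollary but simply notes it as an immediate consequence of Theorem \ref{thm3}, and your case analysis on $\gcd(p,n)$ is exactly the unpacking that justifies this.
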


Combining Theorems \ref{thm2} and \ref{thm3} shows that to completely determine which projective reflection groups $G(r,p,q,n)$ have GIMs, it remains only to consider groups of the form
\[ G(2r,1,2q,2n)\quad\text{or}\quad G(2r,2,2q,2n)\quad\text{or}\quad G(8r+4,4,8q+4,8n+4).\] (Of course we only need to consider the first two types when $2n>2$). We state some conjectures concerning which of these groups should have GIMs at the end of Section \ref{last-sect}.

This research continues a line of inquiry taken up by a number of people in the past few decades. Researchers originally considered \emph{involution models}, which are simply GIMs defined with respect to the identity automorphism.
Inglis, Richardson, and Saxl described an elegant involution model for the symmetric group in \cite{IRS} (which is precisely the model in  Example \ref{exa2} when $r=1$). In his doctoral thesis, Baddeley \cite{B91} classified which finite Weyl groups have involution models. Vinroot \cite{V} extended this classification to show that the finite Coxeter groups with involution models are 
precisely those of type $A_n$, $BC_n$, $D_{2n+1}$, $F_4$,   $H_3$, and $I_2(m)$. In extending this classification to reflection groups, it is natural to consider generalized involution models, since only groups whose representations are all realizable over the real numbers can possess involution models. Adin, Postnikov, and Roichman \cite{APR} constructed a GIM for $G(r,n)$ extending Inglis, Richardson, and Saxl's original model for $S_n$, which provides the starting point of \cite{Ma1,Ma}.

As mentioned at the outset,
these sorts of classifications are interesting because they lead to interesting representations. 
We close this introduction with some recent evidence of this phenomenon. The model in Example \ref{exa2} with $r=1$ gives rise via induction to a representation of $S_n$ on the vector space spanned by its involutions. 
This representation turns out to have a simple combinatorial definition  \cite[\S1.1]{APR2007}, which surprisingly makes sense \emph{mutatis mutandis} for any Coxeter group. The generic Coxeter group representation we get in this way  corresponds to an involution  model (in the finite cases)  
 in precisely types $A_n$, $H_3$, and $I_2(2m+1)$. What's more, recent work of Lusztig and Vogan \cite{LV1,LV2,LV3} indicates that this representation 
is the specialization of a Hecke algebra representation which for Weyl groups is expected to have deep connections to the unitary representations of real reductive groups.

\section{Preliminaries}\label{prelim-sect}

Throughout we let 
$ [n] \eqdef \{ i \in \ZZ : 1 \leq i \leq n \}$  denote the set of the first $n$ positive integers. 
Fix positive integers $r$ and $n$. We write $ \ZZ_r$ 
to denote the cyclic group of order $r$; for convenience we  view this as the set $\{0,1,\dots,r-1\}$, with addition computed modulo $r$.
Likewise we write 
$ S_n$
 to denote the {symmetric group} of permutations of the set $[n]$.

Recall the definition of the group $G(r,n)$ from Example \ref{exa2} in the introduction.
This group is isomorphic to the semidirect product of $(\ZZ_r)^n$ by $S_n$ with respect to the natural action of the symmetric group and we frequently employ the following notation to refer to its elements.

\begin{defn}
Given $\pi \in S_n$ and $x=(x_1,x_2,\dots,x_n) \in (\ZZ_r)^n$,  let 
\[ (\pi,x) \in G(r,n) \]
denote the matrix whose $i$th column has $(\zeta_r)^{x_i}$  in row $\pi(i)$ and zeros in all other rows, where $\zeta_r \eqdef \exp\(\frac{2\pi  \sqrt{-1}}{r}\) $ is a fixed primitive $r$th root of unity.

\end{defn}

\begin{remark} 
When describing  elements of $(\ZZ_r)^n$, we often write $e_1,e_2\dots ,e_n$ for the standard basis of the free $\ZZ$-module $(\ZZ_r)^n$, so that 
we may then express the element $x=(x_1,x_2,\dots,x_n) \in (\ZZ_r)^n$ as $x = \sum_{i=1}^n x_i e_i$.
\end{remark}


The product of two elements $(\pi,x), (\sigma,y) \in G(r,n)$ is described as follows.
The symmetric group $S_n$ acts on $(\ZZ_r)^n$ by permuting coordinates; denote this action by letting
\[ \pi(x) \eqdef \( x_{\pi^{-1}(1)}, x_{\pi^{-1}(2)},\dots,x_{\pi^{-1}(n)}\)\qquad\text{for $\pi \in S_n$ and $x \in (\ZZ_r)^n$}.\]
One then checks that if  $\pi,\sigma \in S_n$ and $x,y \in (\ZZ_r)^n$  then 
\[ (\pi,x)(\sigma,y) = ( \pi \sigma, \sigma^{-1}(x) + y).\]
We may thus identify $S_n$ and $(\ZZ_r)^n$  as the respective subgroups of $G(r,n)$ consisting of all elements $(\pi,x)$ with $x = 0$ and $\pi =1$. 

To extract the pair $(\pi,x)$ from an arbitrary element $g \in G(r,n)$, 
we make the following definition.

\begin{defn}
Given $g = (\pi,x) \in G(r,n)$ and an integer $i \in [n]$, let
\[  |g| \eqdef \pi \in S_n \qquad\text{and}\qquad z_i(g) \eqdef x_i \in \ZZ_r
\qquad\text{and}\qquad
\Delta(g) \eqdef \sum_{i=1}^n z_i(g)
.\]
The map $g\mapsto |g|$ affords a homomorphism $G(r,n) \to S_n$, while $g \mapsto \Delta(g)$ affords a homomorphism $ G(r,n) \to \ZZ_r$.
\end{defn}

If $p$ is a positive divisor of $r$ then the subgroup $G(r,p,n)$ consists of all elements $g \in G(r,n)$ with $\Delta(g) \in 
p\ZZ_r \eqdef \{ p,2p,3p,\dots,rp\} \subset \ZZ_r$. In particular $G(r,1,n)= G(r,n)$ while $G(r,r,n) = \ker( \Delta)$.
Throughout, we write $c$ to denote the $n \times n$ scalar matrix $\zeta_r I_n$; this is the central element 
\[c = (1,e_1+e_2+\dots +e_n) \in G(r,n).\]
If $q$ divides $r$ and $pq$ divides $rn$, then $G(r,p,n)$ contains the cyclic central subgroup $C_q = \langle c^{r/q}\rangle $ of order $q$. In this case  $G(r,p,q,n)$ is defined as the quotient 
\[ G(r,p,q,n) \eqdef  G(r,p,n)/C_q.\]
We generally refer to elements of $G(r,p,q,n)$ by the same notation $(\pi,x)$ as for elements $G(r,p,n)$, with the added stipulation that $(\pi,x) =c^{ir/q}\cdot (\pi,x) $ for all $i$. This convention, while slightly abusive, does not present much ambiguity in practice.

%
%
%
%
%
%
%
%
 We define $|g|$ for $g \in G(r,p,q,n)$ exactly as for $g \in G(r,p,n)$, but the notation $z_i(g)$ is generally no longer well-defined. 
We also write
 \[ N(r,p,q,n) \eqdef  \{ g \in G(r,p,q,n) : |g| = 1\} \]
 for the normal abelian subgroup of $G(r,p,q,n)$ given by the images of the diagonal matrices in $G(r,p,n)$.

One final piece of notation which we introduce here is the \emph{inverse transpose} or \emph{complex conjugation automorphism}
\[\tau \in \Aut(G(r,p,q,n)).\] Explicitly, we define this automorphism by the formula 
\[ \tau(\pi,x) = (\pi,-x),\qquad\text{for }(\pi,x) \in G(r,p,q,n).\]
In words, note that taking the inverse of the transpose of  $g \in G(r,p,n)$ has the same effect as replacing all entries of the matrix $g$ by their complex conjugates. If we let $\tau$ denote the automorphism of $G(r,p,n)$ afforded by this operation, then $\tau$ preserves the normal subgroup $C_q$, and so descends to an automorphism of $G(r,p,q,n)$ which we denote by the same symbol. 
Note that $\tau^2 = 1$.

Related to this automorphism is the following  fact, proved by the first author in \cite{Ca2}. Let $\Irr(G)$ denote the set of complex irreducible characters of a finite group $G$, and fix positive integers $r,p,q,n$ with $p$ and $q$ dividing $r$ and $pq$ dividing $rn$. 
\begin{thm}[Proposition 4.4, Theorem 4.5, and Proposition 4.6 in \cite{Ca2}]\label{f}  Let $\tau$ denote the inverse transpose automorphism of $G=G(r,p,q,n)$ defined above. Then
\[|\cI_{G,\tau}| \eqdef \left|\left\{ \omega \in G : \omega^{-1} = \tau(\omega)  \right\} \right| \leq \sum_{\psi \in \Irr(G)} \psi(1). \] 
Furthermore, equality holds if and only if (i) $\gcd(p,n) \leq 2$ or (ii) $\gcd(p,n) = 4$ and $r\equiv p \equiv q \equiv n \equiv 4 \modu 8)$.
\end{thm}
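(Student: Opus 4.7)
The plan is to invoke the twisted Frobenius--Schur identity of Bump and Ginzburg, which attaches to each $\psi \in \Irr(G)$ and each order-two automorphism $\nu \in \Aut(G)$ a sign $\epsilon_\nu(\psi) \in \{-1,0,+1\}$ satisfying
\[ |\cI_{G,\nu}| = \sum_{\psi \in \Irr(G)} \epsilon_\nu(\psi)\, \psi(1), \]
with $\epsilon_\nu(\psi) = 0$ unless $\psi \circ \nu = \psi$. Specializing to $\nu = \tau$ immediately yields the asserted inequality, and reduces the characterization of equality to showing that $\epsilon_\tau(\psi) = +1$ for every $\psi \in \Irr(G)$ in the two listed regimes, and that at least one $\psi$ has $\epsilon_\tau(\psi) \leq 0$ in every other case.

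The next step is to parametrize $\Irr(G)$ explicitly. For $G(r,n)$ the irreducible characters $\chi_\lambda$ are indexed by $r$-tuples $\lambda = (\lambda^{(0)},\dots,\lambda^{(r-1)})$ of partitions with $\sum_i |\lambda^{(i)}| = n$. Clifford theory for the cyclic quotient $G(r,n)/G(r,p,n) \cong \ZZ_p$ identifies $\Irr(G(r,p,n))$ with orbits of such tuples under cyclic shift by $r/p$ positions, together with a multiplicity equal to the stabilizer size. Passing to the further quotient $G(r,p,q,n) = G(r,p,n)/C_q$ retains precisely those characters trivial on $C_q$, which is a congruence condition on $\sum_i i|\lambda^{(i)}|$ modulo $r$. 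Under this dictionary $\tau$ acts on indexing tuples by the reversal $(\lambda^{(0)}, \lambda^{(1)}, \dots, \lambda^{(r-1)}) \mapsto (\lambda^{(0)}, \lambda^{(r-1)}, \dots, \lambda^{(1)})$ that fixes the zeroth slot, because $\tau$ negates the color coordinates and $0 \in \ZZ_r$ is the unique fixed point of $i \mapsto -i$.

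To compute $\epsilon_\tau(\chi_\lambda)$ for a $\tau$-invariant character one extends the representation to the semidirect product $G \rtimes \langle \tau \rangle$ and reads off the extended trace at $\tau$. When $\gcd(p,n) \leq 2$, the Clifford stabilizers are small enough that every $\tau$-invariant character admits an explicit extension in which $\tau$ acts by a real symmetric intertwiner; combined with a direct check that the $C_q$-triviality condition is preserved by reversal, this yields $\epsilon_\tau = +1$ throughout. The exceptional regime $\gcd(p,n)=4$ with $r\equiv p\equiv q\equiv n\equiv 4 \modu 8)$ requires a separate argument in which the Clifford stabilizer is nontrivial but the congruences force the associated cocycle at $\tau$ to be a coboundary, again giving $\epsilon_\tau = +1$.

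The main obstacle is the converse: in every remaining case with $\gcd(p,n) \geq 3$, one must exhibit a specific multipartition $\lambda$ satisfying the $C_q$-triviality condition and fixed up to cyclic shift by reversal, whose character $\chi_\lambda$ of $G(r,p,q,n)$ has $\epsilon_\tau \leq 0$. The construction would select the nonzero components of $\lambda$ so that the composite of the shift by $r/p$ and the reversal stabilizing $\lambda$ lifts to an element of $G \rtimes \langle \tau \rangle$ with the wrong order, producing a nontrivial sign in the Clifford cocycle. Arranging for this multipartition to simultaneously satisfy the $C_q$-congruence is a delicate parity computation combining the three divisibilities imposed by $p$, $q$, and the reversal, and it is precisely where the hypotheses in part (ii) of the theorem enter in an essential way.
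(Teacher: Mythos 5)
The paper never proves Theorem \ref{f}: it is quoted wholesale from \cite{Ca2} (Proposition 4.4, Theorem 4.5, Proposition 4.6), so there is no internal argument to compare yours against; I can only judge the proposal on its own terms. Its first step is sound: the Bump--Ginzburg twisted Frobenius--Schur identity does give $|\cI_{G,\tau}|=\sum_{\psi\in\Irr(G)}\epsilon_\tau(\psi)\psi(1)$ with $\epsilon_\tau(\psi)\in\{-1,0,1\}$, and the inequality follows. But already here there is an error with consequences: the indicator vanishes unless $\psi\circ\tau=\overline{\psi}$, not unless $\psi\circ\tau=\psi$. For the quotients $G(r,p,q,n)$ with $\gcd(p,n)\geq 3$ the deficit $\sum_\psi\psi(1)-|\cI_{G,\tau}|$ comes precisely from split constituents for which these two conditions diverge (constituents that are not $\tau$-real and hence have indicator $0$), so with your stated criterion the converse direction cannot even be formulated correctly.

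The larger problem is that everything after the inequality is a plan rather than a proof, and the plan is exactly the content of the theorem. The assertions that for $\gcd(p,n)\leq 2$ every relevant character ``admits an explicit extension in which $\tau$ acts by a real symmetric intertwiner,'' that in the regime $r\equiv p\equiv q\equiv n\equiv 4\pmod 8$ with $\gcd(p,n)=4$ ``the congruences force the associated cocycle at $\tau$ to be a coboundary,'' and that in all remaining cases one ``would select'' a multipartition with $\epsilon_\tau\leq 0$, are precisely the statements that must be proved; none is accompanied by a construction or computation, and nowhere do the conditions $\gcd(p,n)\leq 2$ or the mod $8$ congruences actually emerge from an argument. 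Moreover the proposed mechanism for the converse is doubtful: the sign of a twisted indicator is not a question of a $2$-cocycle being a coboundary, but of whether an intertwiner realizing $\rho\circ\tau\cong\overline{\rho}$ can be normalized symmetrically, and in any case the typical failure mode here is indicator $0$ (loss of $\tau$-reality under splitting, governed by how the shift-by-$r/p$ orbit, the color reversal, and the $C_q$-congruence interact), which your outline never engages. As it stands the proposal establishes only the inequality; the equality criterion, which is what the rest of the paper actually uses, remains unproved.
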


A \emph{class-preserving outer automorphism} of a group $G$ is an automorphism which sends every element to a conjugate element but which is not a map of the form $g\mapsto xgx^{-1}$ for some $x \in G$.
The following lemma is immediate from the preceding theorem and \cite[Lemma 5.1]{Ma}.

\begin{lem}\label{5.1sub} Suppose $G=G(r,p,q,n)$ has no class-preserving outer automorphisms and equality holds in Theorem \ref{f}. Then $G$ has a GIM if and only if $G$ has a GIM with respect to the automorphism $\tau : (\pi,x) \mapsto (\pi,-x)$.
\end{lem}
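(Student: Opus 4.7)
The plan is to reduce to the specific involution $\tau$ via a dimension count followed by the cited lemma from \cite{Ma}. The backward implication is immediate, since a GIM with respect to $\tau$ is in particular a GIM.

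For the forward direction, I fix a hypothetical GIM $\{\lambda_i : H_i \to \CC\}$ for $G$ with respect to some involution $\nu \in \Aut(G)$. Comparing the degrees on both sides of the defining identity $\sum_i \Ind_{H_i}^G(\lambda_i) = \sum_{\psi \in \Irr(G)} \psi$ yields $\sum_i [G:H_i] = \sum_{\psi} \psi(1)$. Since each $H_i = C_{G,\nu}(\omega_i)$ is the stabilizer of a representative $\omega_i$ of a twisted conjugacy class in $\cI_{G,\nu}$, the index $[G:H_i]$ equals the size of that orbit, so the left-hand side is precisely $|\cI_{G,\nu}|$. Thus merely having a GIM with respect to $\nu$ already forces $|\cI_{G,\nu}| = \sum_{\psi} \psi(1)$.

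Next, I apply Theorem \ref{f}, whose equality case matches exactly what is hypothesized in our lemma, to conclude $|\cI_{G,\tau}| = \sum_{\psi} \psi(1)$ and hence $|\cI_{G,\nu}| = |\cI_{G,\tau}|$. With this numerical equality in hand and the assumption that $G$ has no class-preserving outer automorphisms, I invoke \cite[Lemma 5.1]{Ma}, which upgrades any GIM of $G$ with respect to an involution $\nu$ to a GIM with respect to $\tau$ whenever the two twisted-involution counts coincide. This completes the forward direction.

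The main obstacle is tucked inside the cited lemma from \cite{Ma}: its role is to argue that, absent class-preserving outer automorphisms, the $\nu$-twisted Frobenius--Schur indicators of all irreducible characters of $G$ agree with their $\tau$-twisted analogues once both indicator sums are maximal, so that the stabilizer data $H_i = C_{G,\nu}(\omega_i)$ can be replaced by appropriate stabilizers under $\tau$ without disturbing the model identity. In the present argument we simply appeal to this fact, so beyond the dimension count above and the equality case of Theorem \ref{f}, no new reasoning is required.
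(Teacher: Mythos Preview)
Your proof is correct and follows the same approach as the paper, which simply states that the lemma is immediate from Theorem~\ref{f} and \cite[Lemma 5.1]{Ma}. You have made the dimension count $|\cI_{G,\nu}| = \sum_\psi \psi(1)$ explicit and spelled out how the equality hypothesis of Theorem~\ref{f} feeds into the cited lemma, but the logical structure is identical.
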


\section{Isomorphism classes of $G(r,p,q,n)$} 
\label{iso1-sect}

Let $r$ and $n$ be positive integers and let $p$, $p'$, $q$, $q'$ be positive integer divisors 
of $r$. Throughout, we assume  $pq = p'q'$ and that this product divides $rn$, and we let
\[ G=G(r,p,q,n)
\qquad\text{and}\qquad
G'=G(r,p',q',n).\] The main goal of this section is   to determine a necessary and sufficient condition for $G$ and $G'$ to be isomorphic when $n \neq 2$. We will deal with the case $n=2$ in the next section. To begin, we recall from \cite[Proposition 4.2]{Ca1} the following result.
\begin{prop}\label{suff}
If $\GCD (\frac{rn}{q},p')=\GCD(\frac{rn}{q'},p)$ then for every $g\in G$ there exists a unique $g'\in G'$ such that $g$ and $g'$ have  common representatives in $G(r,n)$, and in this case the map $g\mapsto g'$ determines an isomorphism $G\cong G'$.

\end{prop}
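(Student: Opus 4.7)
The plan is to realize both $G$ and $G'$ as quotients of a single natural subgroup of $G(r,n)$ and to argue that the hypothesis forces these two quotients to be identified. Let $M = \lcm(p,p')$ and set $D = G(r,M,n)$, so that $D = G(r,p,n) \cap G(r,p',n)$. Inclusion followed by quotient gives two natural homomorphisms
\[
p_1 : D \hookrightarrow G(r,p,n) \twoheadrightarrow G, \qquad p_2 : D \hookrightarrow G(r,p',n) \twoheadrightarrow G'.
\]
A pair $(g,g') \in G \times G'$ has a common representative in $G(r,n)$ precisely when there exists $k \in D$ with $p_1(k) = g$ and $p_2(k) = g'$, so the entire proposition reduces to showing that $p_1$ and $p_2$ are both surjective with identical kernels.

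To handle surjectivity of $p_1$, I would count $|D \cap C_q|$ and $|DC_q|$ using the $\Delta$ homomorphism from Section~\ref{prelim-sect}. The scalar element $c^{ir/q}$ lies in $D = G(r,M,n)$ exactly when $\Delta(c^{ir/q}) = inr/q$ is divisible by $M$, i.e.\ when $Mq \mid inr$, giving $|D \cap C_q| = \gcd(Mq,nr)/M$ and hence $|DC_q| = r^n n!\,q/\gcd(Mq,nr)$. Comparing with $|G(r,p,n)| = r^n n!/p$ shows that $p_1$ is surjective iff $\gcd(Mq,nr) = pq$, equivalently $\gcd(M,nr/q) = p$. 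Applying the distributivity identity $\gcd(\lcm(a,b),c) = \lcm(\gcd(a,c),\gcd(b,c))$ and using $p \mid nr/q$ (which follows from $pq \mid nr$) reduces this to $\gcd(p',nr/q) \mid p$. But the hypothesis asserts $\gcd(p',nr/q) = \gcd(p,nr/q')$, and the latter clearly divides $p$, so $p_1$ is surjective; by symmetry, so is $p_2$.

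The same count then yields $|\ker p_1| = |D \cap C_q| = pq/M$ and symmetrically $|\ker p_2| = p'q'/M = pq/M$ since $pq = p'q'$. Both kernels sit inside the cyclic group $D \cap \langle c \rangle$, and subgroups of a cyclic group with equal order coincide; therefore $\ker p_1 = \ker p_2$. The isomorphism theorems then produce the desired bijection $G = D/\ker p_1 \xrightarrow{\sim} D/\ker p_2 = G'$, which is automatically a group homomorphism since $p_1$ and $p_2$ are, and which by construction sends $g$ to the unique $g'$ sharing a representative with it in $D \subset G(r,n)$. The main technical obstacle I anticipate is the bookkeeping of the various $\gcd$ and $\lcm$ identities together with the divisibility conditions $pq \mid nr$ and $M \mid nr$; once the two parallel quotient maps $p_1,p_2$ are in place, the conceptual shape of the argument is essentially forced.
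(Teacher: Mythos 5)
Your proposal is correct. One thing to be aware of: this paper does not prove Proposition \ref{suff} at all --- it is quoted from \cite[Proposition 4.2]{Ca1} --- so there is no in-paper argument to compare against; what you have written is a complete, self-contained proof. Your structural route through $D = G(r,\lcm(p,p'),n) = G(r,p,n)\cap G(r,p',n)$ with the two quotient maps $p_1,p_2$ checks out at every step: any common representative of a pair $(g,g')$ automatically lies in $D$, so the statement really does reduce to ``$p_1,p_2$ surjective with equal kernels''; the counts $|D\cap C_q| = \GCD(Mq,rn)/M$ and $|DC_q| = r^n n!\, q/\GCD(Mq,rn)$ are right; surjectivity of $p_1$ is indeed equivalent to $\GCD(M,rn/q)=p$ (using $\GCD(Mq,rn)=q\,\GCD(M,rn/q)$), and the lattice identity $\GCD(\lcm(p,p'),c)=\lcm(\GCD(p,c),\GCD(p',c))$ together with $p\mid rn/q$ turns this into $\GCD(p',rn/q)\mid p$, which is exactly what the hypothesis $\GCD(rn/q,p')=\GCD(rn/q',p)$ supplies (and symmetrically for $p_2$). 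Finally, $\ker p_1 = D\cap C_q$ and $\ker p_2 = D\cap C_{q'}$ are subgroups of the cyclic group $\langle c\rangle$ of the common order $pq/M=p'q'/M$, hence coincide, and the isomorphism-theorem conclusion, including existence and uniqueness of $g'$, follows as you say. Compared with an elementwise analysis of which scalars $c^{ir/q}$ land in $G(r,p',n)$, your subgroup-order computation makes the role of the hypothesis transparent: it is precisely what forces $\GCD(\lcm(p,p'),rn/q)=p$ and $\GCD(\lcm(p,p'),rn/q')=p'$.
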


Results in \cite[\S 4]{Ca1} completely characterize when $G \cong G^*$ if $n\neq 2$ (where we define $G^* = G(r,q,p,n)$). Our
 strategy is to generalize the ideas in that work to the present context. 

Say that a prime integer $\pi$ appears in a number $k$ with multiplicity $e$ if $\pi^e$ divides $k$ and  $\pi^{e+1}$ does not divide $k$.
A prime  is then \emph{special} if it appears in $p$ and $p'$ with different multiplicities. Since $pq = p'q'$, a prime is special if and only if it also appears in $q$ and $q'$ with different multiplicities. 
We now have the following proposition.

\begin{prop}\label{dec}
Assume that 
\[ \GCD(p,n) = \GCD(p',n) \qquad\text{and}\qquad
\GCD(q,n) = \GCD(q',n),\]
and write $\frac{rn}{pq} = \eta \delta$ where $\eta$ (respectively, $\delta$) is a positive integer equal to a product of non-special (respectively, special) primes.
Then $ G(r,\delta p,q,n)$ is well-defined and $$\qc\cong G(r,\delta p,q,n)\times \ZZ_{\delta}.$$
\end{prop}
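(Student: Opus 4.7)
The plan is to exhibit $G=G(r,p,q,n)$ as an internal direct product $Z\times H$, where $H$ is the image in $G$ of the subgroup $G(r,\delta p,n)\subseteq G(r,p,n)$ and $Z$ is a cyclic central subgroup of order $\delta$. First I would verify that $G(r,\delta p,q,n)$ is well-defined---i.e.\ that $\delta p\mid r$ and $\delta pq\mid rn$---and that $C_q\subseteq G(r,\delta p,n)$, so that $H$ is naturally isomorphic to $G(r,\delta p, q, n)$ and has index $\delta$ in $G$. Prime by prime, all three reduce to the inequalities: at every special prime $\pi$, the multiplicities of $\pi$ in $p$ and in $q$ are each at least the multiplicity of $\pi$ in $n$. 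These are forced by the hypotheses $\gcd(p,n)=\gcd(p',n)$ and $\gcd(q,n)=\gcd(q',n)$, since at a special prime $\pi$ the multiplicities of $\pi$ in $p$ and $p'$ (respectively $q$ and $q'$) are unequal, and the corresponding minima with the multiplicity of $\pi$ in $n$ can only agree when both exceed it.

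For the central subgroup, assuming $n\geq 2$, the center $Z(G)$ consists precisely of the images in $G$ of the scalar matrices $c^k$, and it is cyclic of order $r\gcd(p,n)/(pq)$. The same prime-multiplicity analysis shows that $\delta$ divides this order, so I would take $Z\subseteq Z(G)$ to be the unique subgroup of order $\delta$; concretely $Z$ is generated by the image of $c^{r/(q\delta)}$.

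The main point is then to show $Z\cap H=\{1\}$, after which $G\cong \ZZ_\delta \times G(r,\delta p, q, n)$ follows because $Z$ is central and the orders satisfy $|Z||H|=|G|$. A nonidentity element of $Z$ has the form (image of) $c^{jr/(q\delta)}$ with $1\leq j<\delta$, and lies in $H$ iff some coset representative $c^{(j+k\delta)r/(q\delta)}$ with $0\leq k<q$ belongs to $G(r,\delta p,n)$. Using $\Delta(c^m)=nm$ and the decomposition $rn/(pq)=\eta\delta$, this membership condition simplifies to $\delta\mid j\eta$, and the coprimality $\gcd(\eta,\delta)=1$ then forces $\delta\mid j$, a contradiction. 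The real obstacle throughout is the careful bookkeeping of prime multiplicities in these verifications; the group-theoretic picture---scalar center, natural embedding, intersection pinned down by a coprimality condition---is straightforward once the divisibilities are secured.
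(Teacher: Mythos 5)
Your proposal is correct and is essentially the paper's own argument: the same internal decomposition of $G(r,p,q,n)$ into the image of $G(r,\delta p,n)$ and the central cyclic subgroup generated by the image of $c^{r/(q\delta)}$, with well-definedness checked by the same prime-multiplicity inequalities at the special primes and the trivial intersection forced by $\delta \mid \eta j$ together with $\gcd(\eta,\delta)=1$. One small remark: your appeal to the description of the center (scalar images, of order $\tfrac{r}{pq}\gcd(p,n)$) holds only for $n\neq 2$, but it is also unnecessary---the scalar images always form a central cyclic subgroup of that order, and your explicit generator $c^{r/(q\delta)}$ (which requires the same verification that $\delta q\mid r$) already does the job for every $n$, including the case $n=2$ where the proposition is later invoked in Lemma \ref{coprime}.
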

\begin{proof}
Since $\GCD(q,n) = \GCD(q',n)$, the multiplicity of any special prime in $n$ is not  greater than the corresponding multiplicity in $q$. As $n$ divides $\eta \delta q = \frac{rn}{p}$, it follows that  $n$ divides $\eta q = \frac{rn}{\delta p}$.
Thus $\delta p$ divides $r$, and 
since $\delta pq$ likewise divides $rn$ as $\frac{rn}{\delta pq} = \eta$, 
we conclude that $G(r,\delta p,q,n)$ is well-defined.
 
 A symmetric argument using the assumption that 
 $\GCD(p,n) = \GCD(p',n)$
 shows that  $\delta q$ likewise divides $r$.  Therefore $c^{\frac{r}{\delta q}}$ is a well-defined element of order $\delta$ in $G$; let $C_\delta \cong \ZZ_\delta$ be the cyclic subgroup it generates.  Both $G(r,\delta p,q,n)$ and $C_\delta$ are normal subgroups of $\qc$, so 
to complete the proof of the proposition, we  have only to show that $G(r,\delta p,q,n)$ and $C_{\delta}$ intersect trivially. For this, it suffices to verify that 
\[(c^{\frac{r}{\delta q}})^k\in
G(r,\delta p,q,n)\quad\text{iff}\quad \tfrac{rnk}{\delta q}  \equiv 0 \modu \delta p)\quad \text{iff}\quad k\equiv 0  \modu \delta).\] 
The first equivalence follows by definition, and the second equivalence follows from the fact that if $\frac{rnk}{\delta q} = \delta p k'$ for some integers $k$, $k'$, then by dividing both sides by $p$ one obtains $\eta k = \delta k'$, which can only hold if $k$ is a multiple of $\delta $ as $\eta$ and $\delta$ are  necessarily coprime.
\end{proof}

The next pair of results establish Theorem \ref{thm2} in the case $n\neq 2$. This generalizes \cite[Theoroem 4.4]{Ca1}.

\begin{thm}\label{isomo} If $pq=p'q'$, then the groups $\qc$ and $G(r,p',q',n)$ are  isomorphic whenever
 $\GCD(p,n) = \GCD(p',n)$ and $\GCD(q,n) = \GCD(q',n)$.

\end{thm}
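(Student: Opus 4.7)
The plan is to chain together the two structural results that immediately precede the statement: Proposition~\ref{dec} reduces each side to a product with an explicit cyclic factor $\ZZ_\delta$, and Proposition~\ref{suff} handles the residual isomorphism between the two "reduced" projective reflection groups.

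First I would apply Proposition~\ref{dec} to both $G(r,p,q,n)$ and $G(r,p',q',n)$. The factorization $rn/(pq) = \eta\delta$ into non-special and special prime parts depends only on the set of primes at which $p$ and $p'$ differ in multiplicity, and (via $pq=p'q'$) this set is symmetric in $(p,p')$ and in $(q,q')$. Hence the same $\delta$ appears on both sides, yielding
\[
G(r,p,q,n) \cong G(r,\delta p, q, n) \times \ZZ_\delta
\qquad\text{and}\qquad
G(r,p',q',n) \cong G(r,\delta p', q', n) \times \ZZ_\delta.
\]
It therefore suffices to prove $G(r,\delta p,q,n) \cong G(r,\delta p',q',n)$.

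For this, the plan is to invoke Proposition~\ref{suff}, which requires verifying that $\gcd(rn/q,\delta p') = \gcd(rn/q',\delta p)$. I would check this one prime at a time. At a non-special prime $\pi$, we have $v_\pi(\delta)=0$ and $v_\pi(p)=v_\pi(p')$, $v_\pi(q)=v_\pi(q')$, so both sides are clearly equal. At a special prime $\pi$, write $a=v_\pi(p)$, $b=v_\pi(p')$, $c=v_\pi(q)$, $c'=v_\pi(q')$ with $a+c=b+c'$ and (WLOG) $a<b$, $c>c'$; then by construction $v_\pi(\delta)=v_\pi(rn)-a-c=v_\pi(rn)-b-c'$, from which a short computation shows that both $\min(v_\pi(rn/q),v_\pi(\delta p'))$ and $\min(v_\pi(rn/q'),v_\pi(\delta p))$ equal $v_\pi(rn)-c$. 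This verifies the hypothesis of Proposition~\ref{suff}, which then produces the desired isomorphism.

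The main obstacle is really bookkeeping: one has to make sure that $\delta p$ (resp.\ $\delta p'$) divides $r$ and that $\delta p \cdot q$ divides $rn$ so that $G(r,\delta p,q,n)$ is a bona fide projective reflection group — but this was already established inside the proof of Proposition~\ref{dec}, so I would simply cite it. Beyond that, the argument is a routine prime-by-prime valuation check once the reduction via Proposition~\ref{dec} has equalized the cyclic factors.
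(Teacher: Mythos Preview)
Your proposal is correct and follows essentially the same route as the paper: reduce via Proposition~\ref{dec} (applied symmetrically to both sides) to the claim $G(r,\delta p,q,n)\cong G(r,\delta p',q',n)$, then verify the hypothesis of Proposition~\ref{suff}. The only difference is cosmetic: the paper observes $rn/q=\eta\delta p$ and $rn/q'=\eta\delta p'$, so the condition becomes $\GCD(\eta p,p')=\GCD(\eta p',p)$, which equals $\GCD(p,p')$ since non-special primes have equal multiplicity in $p$ and $p'$; you instead carry out an equivalent prime-by-prime valuation check.
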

\begin{proof}
Write $\frac{rn}{pq} = \eta \delta$ as in Proposition \ref{dec}.
The theorem will follow immediately from Proposition \ref{dec} once we show that $G(r,\delta p,q,n)\cong G(r,\delta p',q',n)$. 
Since $\frac{rn}{q} = \eta \delta p$ and $\frac{rn}{q'} = \eta \delta p'$, it suffices by Proposition \ref{suff}  to verify that
$\GCD(\eta\delta p,\delta p') = \GCD (\eta\delta p',\delta p),$
which is equivalent to the identity
$
\GCD(\eta p,p') = \GCD(\eta p',p).$ This holds because every prime dividing $\eta$ appears in $p$ and $p'$ with equal multiplicity, and so we have in fact that $
\GCD(\eta p,p') = \GCD(\eta p',p) = \GCD(p,p')$.
\end{proof}

The next proposition implies  the converse of Theorem \ref{isomo}, provided $n\neq 2$.
\begin{prop}
   Assume  $n\neq 2$ and let $G=\qc$.
   \begin{enumerate}
   \item[(1)] The center of $G$ has order $\frac{r}{pq} \cdot \GCD(p,n)$.
   \item[(2)] The abelianization $ G/[G,G]$ of $G$ has order $ \frac{2r }{pq} \cdot \GCD(q,n)$. 
   \end{enumerate}
\end{prop}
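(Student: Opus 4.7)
The plan is to describe $Z(G(r,p,n))$ and $[G(r,p,n),G(r,p,n)]$ explicitly and then pass to the quotient by $C_q$. I focus on $n\geq 3$; the case $n=1$ is essentially trivial since $G(r,p,q,1)$ is then cyclic of order $r/(pq)$.

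For part (1), I would first show $Z(G(r,p,n))$ consists precisely of the scalar matrices it contains. Using the product rule $(\pi,x)(\sigma,y)=(\pi\sigma,\sigma^{-1}(x)+y)$, an element $(\pi,x)$ that commutes with every permutation matrix must satisfy $\pi\in Z(S_n)=\{1\}$ (using $n\geq 3$) and must have all coordinates of $x$ equal, so it is a power of $c$. The condition for $c^a$ to lie in $G(r,p,n)$ is $an\equiv 0\pmod p$, giving $|Z(G(r,p,n))|=r\gcd(p,n)/p$. For the quotient I would repeat the analysis with ``central'' replaced by ``central modulo $C_q$'': since $C_q$ has trivial $S_n$-part and consists of diagonals with all coordinates equal, the same conclusions $|g|=1$ and $x_i=x_j$ for all $i,j$ emerge from the commutator identity $[(1,x),(\sigma,0)] = (1,x-\sigma(x))$ applied to transpositions $\sigma=(ij)$ (here I use that for $n\geq 3$ the remaining coordinates of $x-\sigma(x)$ must vanish). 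Hence $Z(G(r,p,q,n))=Z(G(r,p,n))/C_q$ has order $r\gcd(p,n)/(pq)$.

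For part (2) the strategy is to identify $G(r,p,n)^{ab}$ with the image of the map $(\Delta,\sgn):G(r,p,n)\to\ZZ_{r/p}\times\ZZ_2$. Containment of the commutator subgroup in the kernel $K$ of this map is automatic; for the reverse I would cover the two factors of the semidirect product $K=A_n\ltimes\ker(\Delta|_{(\ZZ_r)^n})$ separately. The first factor $A_n$ comes from $A_n=[S_n,S_n]$ applied to permutation matrices, which all lie in $G(r,p,n)$. The second factor is generated, as $i\neq j$ varies, by the commutators $[(1,a),((ij),0)]=(1,(a_i-a_j)(e_i-e_j))$; the key point is that for $n\geq 3$ the entries $a_i,a_j$ may be chosen freely in $\ZZ_r$ with a third coordinate absorbing the divisibility constraint $\Delta(a)\in p\ZZ_r$, so $(1,c(e_i-e_j))$ is realized as a commutator for every $c\in\ZZ_r$. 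A counting check then forces equality $[G(r,p,n),G(r,p,n)]=K$ and $|G(r,p,n)^{ab}|=2r/p$. Finally, since $C_q$ is central, $G(r,p,q,n)^{ab}=G(r,p,n)^{ab}/(\text{image of }C_q)$, and a brief gcd calculation shows the image of $c^{r/q}$ in $\ZZ_{r/p}\times\ZZ_2$ has order $q/\gcd(q,n)$, yielding $|G(r,p,q,n)^{ab}|=2r\gcd(q,n)/(pq)$.

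The main obstacle is the second factor in part (2), namely showing the entire augmentation subgroup $\ker(\Delta|_{(\ZZ_r)^n})$ consists of commutators. This is where the hypothesis $n\geq 3$ is essential: for $n=2$ the absence of a third coordinate forces the divisibility constraint $\Delta(a)\in p\ZZ_r$ to restrict $a_i-a_j$ to a proper subgroup of $\ZZ_r$, the commutator subgroup becomes strictly smaller, and the formula fails, consistent with the proposition's standing assumption $n\neq 2$.
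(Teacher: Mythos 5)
Your proposal is correct, and for part (2) it takes a genuinely different route from the paper. For part (1) the two arguments essentially agree: both identify the center with the scalar elements and count them, though you spell out (via commutators with transpositions, using $n\geq 3$) why an element that is central only modulo $C_q$ must already be scalar, a point the paper leaves as ``one can easily check.'' For part (2) the paper never touches the commutator subgroup: it counts linear characters, using the parametrization of the irreducible characters by $r$-tuples of partitions from \cite[\S 6]{Ca1} to get $\frac{2r}{q}\GCD(q,n)$ linear characters of $G(r,1,q,n)$, and then argues that each linear character of $G$ is the common restriction of exactly $p$ of them. You instead compute $[G(r,p,n),G(r,p,n)]$ directly, showing it equals $\ker(\Delta,\sgn)$ because for $n\geq 3$ a third coordinate can absorb the constraint $\Delta \in p\ZZ_r$ in the commutators $(1,(a_i-a_j)(e_i-e_j))$, so that $G(r,p,n)/[G(r,p,n),G(r,p,n)]\cong p\ZZ_r\times\ZZ_2$ has order $\frac{2r}{p}$, and then you divide by the image of $C_q$, whose order $q/\GCD(q,n)$ follows from $\GCD(r,\frac{rn}{q})=\frac{r}{q}\GCD(q,n)$; these computations check out. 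Your route is more elementary and self-contained (no appeal to the representation theory of wreath products and no restriction--multiplicity claim), and it makes completely transparent where $n\neq 2$ enters; the paper's route is shorter given that the machinery of \cite{Ca1} is already in play. One caveat, which applies equally to the paper's own proof: for $n=1$ the group is cyclic of order $\frac{r}{pq}$, so the formula in (2) (which would give $\frac{2r}{pq}$) really requires $n\geq 3$; your remark that $n=1$ is ``essentially trivial'' glosses over this, but it is a defect of the statement shared with the paper rather than a gap in your argument.
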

\begin{proof}
   One can easily check that, since $n\neq 2$, the center of $G$ is given by the set of its scalar elements  (i.e. of the form $c^i$). 
   The number of scalar elements in $G$ is $\frac{1}{q}$ times the number of scalar elements in $G(r,p,n)$, which is $\frac{r}{p} \cdot \GCD(p,n)$ by \cite[Corollary 4.1]{Ma}.
   
  To prove (2), it suffices to count the linear characters of $G$ since these are equal in number to the order of $G/[G,G]$.
By \cite[\S 6]{Ca1}, the linear characters of $G(r,n)$ are parametrized by $r$-tuples of partitions $(\lambda_0,\ldots,\lambda_{r-1})$ where all partitions $\lambda_i$ are empty except one which can be either $(n)$ or $(1^n)$. The linear representations of $G(r,1,q,n)$ are parametrized by these $r$-tuples of partitions where, if the only non-empty partition appears in a position $i$, then $ni\equiv 0\modu q)$ (i.e. $(\lambda_0,\ldots,\lambda_{r-1})\in \Fer(r,q,1,n)$ in the notation of \cite[\S 6]{Ca1}). Therefore the number of linear characters of $G(r,1,q,n)$ is $\frac{2r}{q}\cdot \GCD(q,n)$. One can likewise check that, since $n\neq 2$, each linear character of $G$ is given by the common restriction of exactly $p$ distinct linear characters of $G(r,1,q,n)$.  Thus the number of linear characters of $G$ is $\frac{1}{p}$ times the number of linear characters of $G(r,1,q,n)$.
\end{proof}

Combining the preceding theorem and proposition gives this corollary, which forms one half of Theorem \ref{thm2} in the introduction.
\begin{cor}\label{cor3.5}
   Assume $n\neq 2$ and $pq=p'q'$. Then $G(r,p,q,n)\cong G(r,p',q',n)$ if and only if $\GCD(p,n) = \GCD(p',n)$ and $\GCD(q,n) = \GCD(q',n)$.
\end{cor}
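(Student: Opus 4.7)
The plan is to observe that one direction of the corollary, the sufficiency, is exactly the content of Theorem \ref{isomo}, so the only work left is the converse: assuming $G(r,p,q,n) \cong G(r,p',q',n)$ and $pq=p'q'$, deduce that $\gcd(p,n)=\gcd(p',n)$ and $\gcd(q,n)=\gcd(q',n)$.

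For this, I would exploit the two isomorphism invariants computed in the preceding proposition. Since isomorphic groups have centers of equal order, part (1) of that proposition gives
\[
\tfrac{r}{pq}\cdot \gcd(p,n) \;=\; \tfrac{r}{p'q'}\cdot \gcd(p',n).
\]
Using the hypothesis $pq = p'q'$, the factors $\tfrac{r}{pq}$ cancel, yielding $\gcd(p,n)=\gcd(p',n)$. Similarly, isomorphic groups have abelianizations of equal order, so part (2) of the proposition gives
\[
\tfrac{2r}{pq}\cdot \gcd(q,n) \;=\; \tfrac{2r}{p'q'}\cdot \gcd(q',n),
\]
and again cancellation via $pq=p'q'$ produces $\gcd(q,n)=\gcd(q',n)$. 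This yields the necessity, and combined with Theorem \ref{isomo} completes the proof.

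There is essentially no obstacle here: the substantive work was already done in proving the preceding proposition (identifying the center as the scalar subgroup when $n\neq 2$, and enumerating linear characters via the parametrization from \cite{Ca1}). The corollary is just the formal comparison of these two invariants, made nontrivial only by the fact that one needs both the center order and the abelianization order to separate the roles of $p$ and $q$ — the center order alone would not distinguish, say, $(p,q)$ from $(q,p)$ when they have equal $\gcd$ with $n$, which is precisely why Theorem \ref{thm2} in the case $n\neq 2$ requires exactly the two conjunctive conditions.
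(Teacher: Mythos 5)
Your proof is correct and matches the paper's argument exactly: sufficiency from Theorem \ref{isomo}, and necessity by comparing the orders of the center and of the abelianization from the preceding proposition, using $pq=p'q'$ to cancel the common factor. Nothing further is needed.
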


\section{Isomorphism classes in rank two} 
\label{iso2-sect} 
In  this section we fix $n=2$, and assume that $p$, $p'$, $q$, $q'$ divide $r$ and $pq=p'q'$ divides $2r$. We now determine when the two groups $G=G(r,p,q,2)$ and $G'=G(r,p',q',2)$ are isomorphic. 

In referring to elements of these groups, it is convenient to abbreviate our notation  by writing $(\pi;a,b)$ for the element otherwise denoted $(\pi,(a,b)) \in G(r,p,q,2)$.
We thus view $G(r,p,q,2)$ as the set of   triples $(\pi;a,b) \in S_2 \times \ZZ_r \times \ZZ_r$ with $a+b$ divisible by $p$, where $(\pi;a,b) = (\pi';a',b')$ if and only if $\pi=\pi'$ and $a-a'\equiv b-b' \equiv k\frac{r}{q} \modu r)$ for some integer $k$. Multiplication is given by 
\[ (\pi;a,b)(\pi';a',b') = \begin{cases} (\pi\pi';a+a',b+b'),&\text{if }\pi' = 1 \in S_2,\\ (\pi\pi';b+a',a+b'),&\text{if }\pi' \neq 1 \in S_2.\end{cases}\]
%
%
We  begin with this lemma:

\begin{lem}\label{npnq}
   If $p+p'$ and $q+q'$ are both odd and $\frac{r}{pq}$ is even then $G\not \cong G'$.
\end{lem}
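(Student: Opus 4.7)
The plan is to distinguish $G$ from $G'$ by showing that their centers are non-isomorphic abelian groups of the same order: $Z(G')$ will turn out to be cyclic, while $Z(G)$ will not.

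After renaming if necessary, the hypotheses that $p+p'$ and $q+q'$ are odd, together with $pq=p'q'$, force $p$ odd, $p'$ even, $q$ even, and $q'$ odd. With this normalization the assumption that $r/(pq)$ is even makes $r$ a multiple of $2pq$, so that $r/2$ is divisible by both $p$ and $p'$.

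The scalars of $G(r,p,2)$ are the elements $(1;i,i)$ with $p\mid 2i$; since $p$ is odd this forces $p\mid i$, giving a cyclic group of order $r/p$ that descends to a cyclic scalar subgroup of order $r/(pq)$ in $G$. The analogous count for $G'$ (with $p'$ even, so $p'/2\mid i$) produces a cyclic scalar subgroup of order $2r/(pq)$ in $G'$. To account for this order discrepancy I would next exhibit an extra central element of $G$ beyond its scalars: the element $\xi:=(1;0,r/2)$ lies in $G(r,p,2)$ because $p\mid r/2$, commutes with every translation automatically, and by direct multiplication one sees it commutes with the transposition modulo $C_q$ precisely when $r/2\in(r/q)\ZZ_r$, which is equivalent to $q$ being even. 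Thus $\xi\in Z(G)$; it has order $2$ and is not equivalent to any scalar in the quotient. The analogous element $(1;0,r/2)\in G(r,p',2)$ is not central in $G'$ because $q'$ is odd.

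The remaining step would be to rule out further central elements in either group -- specifically, no element with nontrivial permutation part can be central. This amounts to showing that the set $\{a'-b':p\mid a'+b'\}\subseteq\ZZ_r$, which equals all of $\ZZ_r$ when $p$ is odd and equals $2\ZZ_r$ when $p$ is even, cannot be contained in the intersection $\{0,r/2\}\cap(r/q)\ZZ_r$ (of cardinality at most $2$); since $r\geq 2pq\geq 4$ in our setting, this containment fails. Once this is verified one concludes
\[
Z(G)\;\cong\;\ZZ_{r/(pq)}\times \ZZ_2
\qquad\text{and}\qquad
Z(G')\;\cong\;\ZZ_{2r/(pq)}.
\]
Because $r/(pq)$ is even by hypothesis, $Z(G)$ has exponent $r/(pq)$ and is noncyclic, whereas $Z(G')$ is cyclic of twice that exponent. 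These two abelian groups are non-isomorphic, so $G\not\cong G'$. The principal obstacle is the centralizer analysis just sketched, which parallels arguments used in Section~\ref{iso1-sect} but must be carried out by direct computation in rank two since Proposition~\ref{dec} does not apply.
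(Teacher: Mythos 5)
Your argument is correct, but it takes a genuinely different route from the paper. After the same normalization ($p,q'$ of one parity, $p',q$ of the other, WLOG $p$ odd, $q$ even, $p'$ even, $q'$ odd), the paper first invokes Theorem \ref{isomo} to reduce to the single comparison $G(r,p,1,2)$ versus $G(r,1,p,2)$ with $p$ and $r/p$ both even, and then distinguishes these by counting the image of the power map $g\mapsto g^{r/p}$ (the two images have sizes $p$ and $p+1$). You instead distinguish $G$ and $G'$ directly by their centers, with no reduction step: the scalar image together with the extra central element $\xi=(1;0,r/2)$ (central exactly because $q$ is even) gives $Z(G)\cong \ZZ_{r/pq}\times\ZZ_2$, which is noncyclic since $r/pq$ is even, while on the other side the oddness of $q'$ kills both the analogue of $\xi$ and the case $a-b=r/2$ in the centrality condition, so $Z(G')\cong\ZZ_{2r/pq}$ is cyclic; the two centers have the same order but different structure. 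I verified your computations, including the order counts $r/pq$ and $2r/pq$ for the scalar images and the exclusion of central elements with nontrivial permutation part; the argument is sound. One small point to state carefully when you write up the last exclusion: for $G'$ the achievable differences $\{a'-b' : p'\mid a'+b'\}$ form $2\ZZ_r$, which has exactly two elements when $r=4$ (forced to be $r=4$, $p'=2$, $q'=1$ in that borderline case), so the cardinality comparison alone does not finish; you need the observation that, $q'$ being odd, $r/2\notin \frac{r}{q'}\ZZ_r$, so the relevant intersection is just $\{0\}$ and the containment still fails. Your approach has the merit of being self-contained (it does not rely on Theorem \ref{isomo}) and of identifying a structural invariant (cyclicity of the center) rather than a counting invariant; the paper's power-count is shorter once the reduction is available.
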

\begin{proof}
Since $pq=p'q'$ we may assume without loss of generality that $p'$ and $q$ are odd and that $p$ and $q'$ are even. 
By Theorem \ref{isomo} we then have that $G\cong G(r,pq,1,2)$ and $G'\cong G(r,1,p'q',2)$, and so it is enough to show that  if  $p$ and $\frac{r}{p}$ are both even then $G(r,p,1,2) \not\cong G(r,1,p,2)$.

To this end, let  $A = \{g^{r/p}:g\in G(r,p,1,2)\}$ and  $B = \{g^{r/p}:g\in G(r,1,p,2)\}$.  It suffices to show that $|A|=p$ and $|B|=p+1$. 
 It is easy to check that $A$ consists  of the distinct elements $(1;\frac{ir}{p},-\frac{ir}{p}) \in G(r,p,1,2)$ for $i \in [p]$. It is likewise a straightforward exercise to show that $B$ consists of the distinct images in $G(r,1,p,2)$ of $(1;0,\frac{ir}{p}) \in G(r,2)$ for $i \in [p]$ together with $(1;\frac{r}{2p},\frac{r}{2p}) \in G(r,2)$.
\end{proof}

Our next lemma is similar.

\begin{lem}\label{1odd} If exactly one of the four parameters $p$, $p'$, $q$, $q'$ is odd then $G\not \cong G'$.
   
\end{lem}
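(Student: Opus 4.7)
The plan is to mirror Lemma \ref{npnq}: reduce both $G$ and $G'$ to simple standard forms via Theorem \ref{isomo}, then distinguish the standard forms by a group-theoretic invariant. The natural invariant here is the exponent of the group.

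First I would normalize by the symmetries in the hypothesis. The conclusion is invariant under swapping $G$ and $G'$, and isomorphism is compatible with the duality $G \mapsto G^{\ast}=G(r,q,p,2)$, so I may assume without loss of generality that the unique odd parameter is $p$, with $q$, $p'$, $q'$ all even. Write $Q:=pq=p'q'$. A short $2$-adic valuation argument then upgrades the hypothesis $Q\mid 2r$ to $Q\mid r$: since $p$ is odd, $v_2(Q)=v_2(q)\le v_2(r)$, while the odd parts of $p$ and $q$ separately divide $r$. Moreover $4\mid Q$, because $Q=p'q'$ is a product of two even integers.

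Next I would apply Theorem \ref{isomo} to put each group in a standard form. Since $\gcd(p,2)=\gcd(1,2)=1$ and $\gcd(q,2)=\gcd(Q,2)=2$, we get $G\cong G(r,1,Q,2)$. Since $4\mid Q$ implies $Q/2$ is even, an analogous verification of the gcd conditions gives $G'\cong G(r,2,Q/2,2)$. Both standard forms are well-defined because $Q\mid r$.

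It then suffices to show these two standard forms have different exponents. In $G(r,1,Q,2)$, the element $(1;0,1)$ has order $r$: its $k$-th power is $(1;0,k)$, which lies in $C_Q=\langle(1;r/Q,r/Q)\rangle$ exactly when $r\mid k$. On the other hand, every element of $G(r,2,Q/2,2)$ has order dividing $r/2$. For a diagonal element $(1;a,b)$ with $a+b$ even, the $(r/2)$-th power lies in $\{(1;0,0),(1;r/2,r/2)\}$, and the element $(1;r/2,r/2)=(Q/4)\cdot(1;2r/Q,2r/Q)$ is trivial in the quotient by $C_{Q/2}$ (since $4\mid Q\mid r$ makes $Q/4$ and $2r/Q$ integers, and $(1;2r/Q,2r/Q)$ generates $C_{Q/2}$). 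For an element $(s;a,b)$, squaring gives a diagonal element with even coordinate $a+b$, and since $4\mid r$ one has $(s;a,b)^{r/2}=((s;a,b)^2)^{r/4}$, reducing to the diagonal case already handled. Hence the exponent of the first standard form is $r$ while that of the second is at most $r/2$, and so $G\not\cong G'$.

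The main obstacle is purely the bookkeeping of the divisibilities $Q\mid r$, $4\mid Q$, and $4\mid r$ needed in the two applications of Theorem \ref{isomo} and in the final exponent computation. Each follows directly from the ``exactly one odd'' hypothesis together with the definedness of the two groups, but the reductions must be organized in the right order.
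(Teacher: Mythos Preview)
Your argument has one genuine gap: the appeal to duality. You claim that ``isomorphism is compatible with the duality $G\mapsto G^*$'' in order to reduce to the case where $p$ is the odd parameter. But the implication $G\cong G'\Rightarrow G^*\cong (G')^*$ is not established anywhere prior to this lemma; it is in fact a \emph{consequence} of Theorem~\ref{thm2}, which this lemma is being used to prove. So invoking it here is circular. The fix is painless: after swapping $G\leftrightarrow G'$ you may assume the odd parameter lies in $\{p,q\}$, and then simply treat the two cases in parallel. If $q$ is odd (so $p,p',q'$ are even) the same $2$-adic bookkeeping gives $Q\mid r$ and $4\mid Q$, and Theorem~\ref{isomo} yields $G\cong G(r,Q,1,2)$ while still $G'\cong G(r,2,Q/2,2)$. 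The element $(1;1,-1)\in G(r,Q,1,2)$ has order $r$, and your exponent bound for $G(r,2,Q/2,2)$ is unchanged, so the conclusion follows.

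Apart from this, the proof is correct and takes a genuinely different route from the paper's. Both arguments reduce via Theorem~\ref{isomo} to comparing $G(r,1,Q,2)$ (or $G(r,Q,1,2)$) against a group isomorphic to $G(r,2,Q/2,2)$; the paper's standard form $G(r,Q/2,2,2)$ is isomorphic to yours by another application of Theorem~\ref{isomo}, since $Q/2$ is even. The paper then distinguishes the two standard forms by computing the cardinalities of the sets $\{g^{m}:g\in G\}$ for a suitable $m$ and observing they differ by a factor of $2$. You instead compare exponents: one group contains an element of order $r$, the other has exponent dividing $r/2$. Your invariant is cleaner and avoids the element-by-element enumeration; the paper's power-set invariant is more hands-on but would in principle separate groups that happen to share an exponent. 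Either works here.
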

\begin{proof} We may assume that the unique odd parameter is either $q'$ or $p'$.
By Theorem \ref{isomo},
   if $q'$ is the unique odd parameter then   $G'\cong G(r,pq,1,2)$,
   and if $p'$ is the unique odd parameter then $G'\cong G(r,1,pq,2)$, and in either case $G \cong G(r,\frac{pq}{2},2,2)$.  It thus suffices to show that if $p$ and $\frac{r}{p}$ are even then $G(r,2p,1,2)\not\cong G(r,p,2,2)$ and $G(r,1,2p,2)\not\cong G(r,p,2,2)$.
  With these hypotheses on $p$ and $\frac{r}{p}$,  let 
  \[\begin{aligned} A &= \{ g^{r/p} : g \in G(r,2p,1,2)\}, \\ B &= \{ g^{r/p} : g \in G(r,1,2p,2)\},\\ C &= \{g^{r/p} : g \in G(r,p,2,2)\}.
  \end{aligned}\]
   As in the proof of Lemma \ref{npnq},
   it is not difficult to check that $A$ consists of the distinct elements $(1;\frac{ir}{p},-\frac{ir}{p}) \in G(r,2p,2)$ for $i \in [p]$. On the other hand,    one finds similarly that 
   $B$ consists of the distinct images in $G(r,1,2p,2)$ of the elements $(1;0,\frac{ir}{p}) \in G(r,2)$ for $i \in [p]$. Finally, $C$ consists of the distinct images in $G(r,p,2,2)$ of the elements $(1;\frac{ir}{p},-\frac{ir}{p}) \in G(r,p,2)$ for $i \in [\frac{p}{2}]$.
 Thus $|A| = |B| = p$ and $|C| = \frac{p}{2}$, which  establishes the desired non-isomorphisms.
   \end{proof}

We now examine a particular class of groups $G=G(r,p,2)$ where we can explicitly describe an isomorphism $\phi:G\rightarrow G^*$. 
\begin{lem} \label{coprime}
If $p$ or $\frac{r}{p}$ is odd then $G(r,p,1,2) \cong G(r,1,p,2)$.
\end{lem}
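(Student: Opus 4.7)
The plan is to split into two cases: $p$ odd, or $p$ even with $s := r/p$ odd. When $p$ is odd, one has $\gcd(p,2) = 1 = \gcd(1,2)$, so Theorem \ref{isomo} applied with $(p',q') = (1,p)$ yields $G(r,p,1,2) \cong G(r,1,p,2)$ immediately.

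The substantive case is $p$ even with $s$ odd, which forces $r$ and $p$ to share the same $2$-adic valuation. I would write $r = 2^a r_0$ and $p = 2^a p_0$ with $r_0, p_0$ odd and $p_0 \mid r_0$, then invoke the Chinese Remainder isomorphism $\ZZ_r \cong \ZZ_{2^a} \oplus \ZZ_{r_0}$. This induces a decomposition of $G(r,2)$ as a fiber product
\[ G(r,2) \cong G(2^a, 2) \times_{S_2} G(r_0, 2) \]
over the common projection $g \mapsto |g|$. Under this identification I would verify that
\[ G(r,p,1,2) \cong G(2^a, 2^a, 1, 2) \times_{S_2} G(r_0, p_0, 1, 2) \]
and
\[ G(r,1,p,2) \cong G(2^a, 1, 2^a, 2) \times_{S_2} G(r_0, 1, p_0, 2). \]
The first identity follows because the condition $\Delta(g) \in p\ZZ_r$ decomposes under CRT into separate congruences in each factor. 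The second follows because the central element $(1;s,s)$, whose powers we quotient to obtain $G(r,1,p,2)$, projects to a unit in $\ZZ_{2^a}$ (so it generates the full center $C_{2^a}$ of $G(2^a,2)$) and to an element of order $p_0$ in $\ZZ_{r_0}$; since $\gcd(2^a, p_0) = 1$, the cyclic subgroup generated by this pair splits as a direct product of the two central pieces, so the quotient splits as a fiber product of the corresponding quotients.

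It then remains to establish two factorwise isomorphisms: $G(r_0, p_0, 1, 2) \cong G(r_0, 1, p_0, 2)$, which follows from the first case of this proof since $p_0$ is odd; and $G(2^a, 2^a, 1, 2) \cong G(2^a, 1, 2^a, 2)$, where I would check directly that both groups are dihedral of order $2^{a+1}$. Explicitly, $G(2^a, 2^a, 1, 2) = \langle (1;1,-1),\, (\tau;0,0)\rangle$ satisfies the dihedral relations of order $2 \cdot 2^a$, while $G(2^a, 1, 2^a, 2) = G(2^a, 2)/\langle (1;1,1)\rangle$ is generated by the images of $(1;1,0)$ and $(\tau;0,0)$ with the same presentation; matching generators gives the isomorphism, which manifestly preserves the projection to $S_2$. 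The main obstacle is the bookkeeping in the CRT step: verifying that both the subgroup $G(r,p,1,2)$ and the quotient $G(r,1,p,2)$ decompose compatibly with the fiber product structure, and that the factorwise isomorphisms preserve the $S_2$-projection so that they glue into a single isomorphism of fiber products.
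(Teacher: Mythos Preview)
Your argument is correct and takes a genuinely different route from the paper's.

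Both approaches agree on the easy case ($p$ odd) and both, in the hard case, separate the $2$-primary part of $p$ from its odd part. The mechanisms differ. The paper applies Proposition~\ref{dec} with $p' = 2^a$, $q' = p_0$ to peel off a cyclic direct factor $\ZZ_\delta$ on each side, thereby reducing to the situation where $r/p$ and $p$ are coprime; in that reduced case it writes down a single explicit map $(\pi;i,j)\mapsto(\pi;i,j+di)$ with $d=r/p'$ and checks injectivity by hand. Your argument instead uses the Chinese Remainder splitting $\ZZ_r\cong\ZZ_{2^a}\times\ZZ_{r_0}$ to realize $G(r,2)$ as a fiber product over $S_2$, verifies that both the subgroup $G(r,p,2)$ and the quotient $G(r,2)/C_p$ respect this splitting, and then matches the two resulting fiber factors separately: the odd factor by Theorem~\ref{isomo}, the $2$-primary factor by recognizing both $G(2^a,2^a,1,2)$ and $G(2^a,1,2^a,2)$ as dihedral of order $2^{a+1}$. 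Your approach is more structural and avoids the somewhat delicate injectivity check in the paper; the paper's approach has the advantage of producing a single closed-form isomorphism.

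One point to make explicit when you write this up: to glue the factorwise isomorphisms into a fiber-product isomorphism you need the odd-factor isomorphism $G(r_0,p_0,1,2)\cong G(r_0,1,p_0,2)$ to commute with the projection to $S_2$. The statement of Theorem~\ref{isomo} does not assert this, but its proof (via Propositions~\ref{suff} and~\ref{dec}) does produce such an isomorphism; alternatively, the explicit map $g\mapsto g\cdot c^{\Delta(g)x/p}$ of Section~\ref{iso3-sect} visibly preserves $|g|$. You flagged this as ``the main obstacle,'' so just be sure to cite one of these when filling in the details.
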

\begin{proof}
If $p$ is odd then $G(r,p,1,2) \cong G(r,1,p,2)$ by Theorem \ref{isomo}, so assume that $\frac{r}{p}$ is odd.
Let $p'$ be the largest power of 2 dividing $p$ (and hence also $r$), and let $q=1$ and $q' = p/p'$.
With respect to these choices of $p$, $p'$, $q$, $q'$, the special primes are precisely the odd primes  dividing $p$.  Write $\frac{2r}{p} = \frac{rn}{pq} = \eta \delta$ as in Proposition \ref{dec}, so that $\eta$ is a product of non-special primes and $\delta$ is a product of special primes, and we have
\[G(r,p,1,2) \cong G(r,\delta p,1,2) \times \ZZ_\delta
\]
and 
\[  G(r,1,p,2) \cong G(r,\delta,p,2) \times \ZZ_\delta
\cong G(r,1,\delta p,2) \times \ZZ_\delta,
\] 
the second congruence on the right following from Theorem \ref{isomo} as $\delta$ is odd.
Because $\frac{r}{p}$  is also odd,   $\eta$ is even and $\frac{\eta}{2} = \frac{r}{\delta p}$ is odd; thus $\frac{r}{\delta p}$ is a product of odd primes not dividing $p$, and so is coprime to both $p$ and $\delta$ and in particular to $\delta p$.

Since $G(r,p,1,2)\cong G(r,1,p,2)$ if $G(r,\delta p,1,2) \cong G(r,1,\delta p,2)$,
the preceding argument shows that we may assume without loss of generality that   $\frac{r}{p}$ and $p$ are coprime.
One checks that for $d = r/p'$  the map
\[\begin{array}{cccc}
 \phi: & G(r,p,1,2)&\rightarrow &G(r,1,p,2)\\
 & (\pi;i,j) &\mapsto&(\pi;i,j+di)
\end{array}\]  is a well-defined group homomorphism. 
To show that $\phi$ is an isomorphism it is  enough to demonstrate injectivity, so let $g\in G(r,p,1,2)$  such that $\phi(g)=1$. Then $g$ is necessarily of the form $(1;i,j)$ with 
\[ i+j \equiv 0 \modu p)\qquad\text{and}\qquad i\equiv j+di \equiv k\tfrac{r}{p}\modu r)\text{ for some $k \in [p]$},\] the second congruence following from the assumption that $\phi(1;i,j)=(1;i,j+di)$ represents the identity in $G(r,1,p,2)$.
These two congruences imply that $k\frac{r}{p}(2-d)$ is a multiple of $p$.
Since $d$ is odd,  no number dividing $2-d$ divides either 2 or $d$, and as every odd prime dividing $p$ also divides $d$, it follows that $\GCD(2-d,p)=1$.
Since $\frac{r}{p}$ is  coprime to $p$ by hypothesis, we conclude that $k$ is a multiple of $p$, which implies that  
 $i\equiv  j \equiv 0 \modu r)$ and in turn that $g=1$, as desired.
\end{proof}

Gathering together the preceding results yields the following summary theorem.

\begin{thm}\label{n2-thm} Assume $pq=p'q'$. Then
   $G(r,p,q,2)\cong G(r,p',q',2)$  if and only if one of the following mutually exclusive conditions holds:
   \begin{enumerate}
   \item[(i)] $p+p'$ and $q+q'$ are both even;
   \item[(ii)] $p+p'$, $q+q'$, and $\frac{r}{pq}$ are all odd integers.
\end{enumerate}
\end{thm}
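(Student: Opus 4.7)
The plan is to deduce Theorem \ref{n2-thm} from the three lemmas of this section together with Theorem \ref{isomo}, by a parity case analysis on $p,p',q,q'$. Conditions (i) and (ii) are manifestly mutually exclusive since (i) requires $p+p'$ even while (ii) requires it odd, so it suffices to establish each direction on the nose.

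For sufficiency, case (i) is immediate: since $p+p'$ and $q+q'$ are both even, we have $\gcd(p,2)=\gcd(p',2)$ and $\gcd(q,2)=\gcd(q',2)$, so Theorem \ref{isomo} applies directly. Case (ii) requires one more step. The parity hypotheses together with $pq=p'q'$ force, after possibly swapping $(p,q)\leftrightarrow(p',q')$, that $p$ and $q'$ are odd while $p'$ and $q$ are even. A short $2$-adic valuation check using $p,q\mid r$ then gives $pq\mid r$, so that $G(r,1,pq,2)$ and $G(r,pq,1,2)$ are both well-defined. Two applications of Theorem \ref{isomo} then yield $G\cong G(r,1,pq,2)$ and $G'\cong G(r,pq,1,2)$, and since $r/pq$ is odd by hypothesis, Lemma \ref{coprime} supplies the final isomorphism $G(r,1,pq,2)\cong G(r,pq,1,2)$.

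For necessity, I would assume $G\cong G'$ and eliminate every parity pattern not covered by (i) or (ii). The identity $pq=p'q'$ reduces modulo $2$ to $\bar p\bar q=\bar{p'}\bar{q'}$. If $p+p'$ and $q+q'$ have opposite parities, each admissible sub-case of this constraint forces exactly one of $\{p,p',q,q'\}$ to be odd, contradicting Lemma \ref{1odd}. If $p+p'$ and $q+q'$ are both odd, the same mod $2$ identity forces the odd parameters onto a ``diagonal'', say $\{p,q'\}$, with $p',q$ even; the hypothesis that $r/pq$ is even would then contradict Lemma \ref{npnq}, so $r/pq$ must be odd, which is case (ii). The only remaining pattern, both sums even, is case (i).

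The main point requiring care is the reduction in case (ii), where one must verify that $pq$ divides $r$ (not merely $2r$) in order to chain together two applications of Theorem \ref{isomo} with Lemma \ref{coprime}; everything else is routine parity bookkeeping.
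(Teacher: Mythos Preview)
Your proof is correct and follows essentially the same route as the paper's: Theorem \ref{isomo} handles case (i) and the reductions in case (ii), Lemma \ref{coprime} bridges $G(r,pq,1,2)\cong G(r,1,pq,2)$, and Lemmas \ref{npnq} and \ref{1odd} dispatch the failing parity patterns. The only cosmetic difference is that the paper chooses the opposite parity assignment in case (ii) (taking $p,q'$ even and $p',q$ odd), and you could streamline slightly by noting that the hypothesis ``$r/pq$ is an odd integer'' already gives $pq\mid r$, making the separate $2$-adic check unnecessary for sufficiency.
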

\begin{proof}
If the first condition holds then $G \cong G'$ by 
 Theorem \ref{isomo}.
 If the second condition holds then
 since $pq=p'q'$, exactly one of $p$, $q$ is odd and it follows that $pq$ in fact divides $r$.
In this case, we may 
   assume  that $p$ and $q'$ are even and that $p'$ and $q$ are odd. 
Theorem \ref{isomo} then implies that $G \cong G(r,pq,1,2)$ and $G'\cong G(r,1,pq,2)$, while Lemma \ref{coprime} implies that  $G(r,pq,1,2)\cong G(r,1,pq,2)$.
 
  If $p+p' $ and $q+q'$ are both odd but $\frac{r}{pq}$ is even then $G\not\cong G'$ by Lemma \ref{npnq}. If $p+p'$ and $q+q'$ have different parities then 
exactly one of the  parameters $p$, $p'$, $q$, $q'$ is odd as $pq=p'q'$, so $G\not\cong G'$ by Lemma \ref{1odd}.   
\end{proof}

Combining this theorem with Corollary \ref{cor3.5} gives Theorem \ref{thm2} in the introduction.

\section{Constructing an isomorphism explicitly}\label{iso3-sect}

We present here an alternative proof of Theorem \ref{isomo} by constructing an explicit isomorphism between $G = G(r,p,q,n)$ and $G'=G(r,p',q',n)$. It is our hope that this construction
may be useful at some point in explaining why groups with $G(r,p,q,n) \cong G(r,q,p,n)$ often tend to have generalized involution models.

Let $r$ and $n$ be any positive integers.
Fix positive divisors $p$, $p'$, $q$, $q'$
of $r$ with $pq=p'q'$ dividing $rn$, and 
for each prime $\pi$ define 
\[a_\pi,\quad a'_\pi,\quad b_\pi,\quad b'_\pi,\quad c_\pi,\quad d_\pi\] as the multiplicities of $\pi$ in the prime factorizations of $p$, $p'$, $q$, $q'$, $r$, $n$, respectively. We  first prove this technical result.

\begin{lem}\label{=><-lem}
   Assume that 
    \[
  \GCD(p,n) = \GCD(p',n)
  \qquad\text{and}\qquad
  \GCD(q,n) = \GCD(q',n).
  \] 
   Then there exists an integer $x$ such that for all primes $\pi$ dividing $rn$, the following three-part condition holds:
\begin{equation*}
\begin{cases} \begin{aligned}
\textrm{ If $a_{\pi}=a'_{\pi}$ then }\qquad&  x\equiv 0 &&\modu \pi^{a_{\pi}+1})\\
\textrm{ If $a_{\pi}>a'_{\pi}$ then }\qquad&x\equiv\pi^{a'_{\pi}-d_{\pi}}&&\modu \pi^{a'_{\pi}-d_{\pi}+1}) \\
\textrm{ If $a_{\pi}<a'_{\pi}$ then }\qquad&\tfrac{rn}{pq}x+\tfrac{r}{q}\equiv \pi^{c_{\pi}-b'_{\pi}}&&\modu \pi^{c_{\pi}-b'_{\pi}+1}).
\end{aligned}
\end{cases}
\end{equation*}

\end{lem}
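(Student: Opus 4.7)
The plan is to reduce to a prime-by-prime analysis and assemble a global $x$ by the Chinese Remainder Theorem. For each prime $\pi$ dividing $rn$, exactly one of the three cases applies, and in each case the condition cuts out a congruence class of $x$ modulo a specific power of $\pi$. Since the moduli corresponding to distinct primes are coprime, CRT combines them into a single integer $x$ meeting every local requirement simultaneously. No condition is imposed for primes not dividing $rn$, so the combination terminates.

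The preliminary step is to confirm that the exponents appearing in the statement are nonnegative. When $a_\pi > a'_\pi$, the hypothesis $\GCD(p,n)=\GCD(p',n)$ forces $\min(a_\pi,d_\pi) = \min(a'_\pi,d_\pi)$; if one had $a'_\pi < d_\pi$ the right-hand minimum would equal $a'_\pi$ while the left-hand minimum would be strictly greater, a contradiction. Hence $a'_\pi \geq d_\pi$, so $a'_\pi - d_\pi \geq 0$. Symmetrically, when $a_\pi < a'_\pi$, the relation $pq = p'q'$ gives $b_\pi > b'_\pi$, and $\GCD(q,n)=\GCD(q',n)$ yields $b'_\pi \geq d_\pi$; combined with $b'_\pi \leq c_\pi$ (since $q'\mid r$), this gives $c_\pi - b'_\pi \geq 0$. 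A parallel argument shows $a_\pi \geq d_\pi$ in case three, a fact that will be used again below.

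Cases one and two are already stated as explicit residue classes of $x$ modulo $\pi^{a_\pi + 1}$ and $\pi^{a'_\pi - d_\pi + 1}$, so once the exponents are verified nothing further is needed. The substantive case is three, where I would show that the linear congruence
\[
\tfrac{rn}{pq}\,x \;\equiv\; \pi^{c_\pi - b'_\pi} - \tfrac{r}{q} \pmod{\pi^{c_\pi - b'_\pi + 1}}
\]
is solvable. The coefficient on the left has $\pi$-adic valuation $c_\pi + d_\pi - a_\pi - b_\pi$, while the right-hand side has valuation exactly $c_\pi - b_\pi$, because $v_\pi(r/q) = c_\pi - b_\pi$ is strictly smaller than $c_\pi - b'_\pi$. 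The standard solvability criterion for a linear congruence then reduces to the inequality $c_\pi + d_\pi - a_\pi - b_\pi \leq c_\pi - b_\pi$, i.e.\ $a_\pi \geq d_\pi$, which is precisely the preliminary observation. A solution $x$ exists modulo $\pi^{c_\pi - b'_\pi + 1}$, and one picks any representative.

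Having obtained for each prime $\pi \mid rn$ a residue class $x \equiv x_\pi \pmod{\pi^{e_\pi}}$, CRT yields a single integer $x$ satisfying all conditions simultaneously. The main obstacle is the valuation bookkeeping in case three, together with extracting the exact implications $a'_\pi \geq d_\pi$, $b'_\pi \geq d_\pi$, and $a_\pi \geq d_\pi$ from the two $\GCD$ hypotheses; once these are in hand, each local congruence is visibly satisfiable and the global assembly is routine.
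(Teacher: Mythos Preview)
Your proposal is correct and follows essentially the same approach as the paper: reduce to one prime at a time via the Chinese Remainder Theorem, verify in cases two and three that the relevant exponents are nonnegative using the $\GCD$ hypotheses, and in case three check solvability of the linear congruence by comparing $\pi$-adic valuations (the paper phrases this as $\GCD(\pi^{c_\pi-b'_\pi+1},\tfrac{rn}{pq})\mid \pi^{c_\pi-b'_\pi}-\tfrac{r}{q}$, which unwinds to the same inequality $d_\pi\le a_\pi$ you obtain). The observation $b'_\pi\ge d_\pi$ you record is harmless but not actually needed.
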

\begin{proof}

   By the Chinese remainder theorem it suffices  to verify that 
   for each prime $\pi$ dividing $rn$, there exists  $x \in \ZZ$ satisfying the relevant congruence.
   If $a_\pi = a_\pi'$ then such an integer $x$ clearly exists. If $a_\pi > a'_\pi$ then $\GCD(p,n) = \GCD(p',n)$ implies $d_\pi \leq a_\pi'$, so the second congruence is   well-defined, and it too clearly has a solution.
   
  If $a_\pi < a_\pi'$ then $pq=p'q'$ implies $b_\pi' < b_\pi$ and 
  $\GCD(p,n) = \GCD(p',n)$  implies $d_\pi \leq a_\pi$ and  the fact that $q'$ divides $r$  implies $b_\pi' \leq c_\pi$. 
Thus 
  the corresponding congruence is at least well-defined. To show that it has a solution, it is enough to 
check  that $\GCD(\pi^{c_{\pi}-b'_{\pi}+1},\frac{rn}{pq})$ divides $\pi^{c_{\pi}-b'_{\pi}}-\frac{r}{q}$. 
This is equivalent to the inequality 
\[\min \{ c_\pi -b_\pi'+1 , (c_\pi +d_\pi)- (a_\pi+b_\pi) \} \leq \min \{ c_\pi - b_\pi', c_\pi - b_\pi\}.\]
Since $b_\pi' < b_\pi$ and $d_\pi \leq a_\pi$, the left-hand side is $(c_\pi - b_\pi) + (d_\pi - a_\pi)$ and the right-hand side is $c_\pi -b_\pi$, and the inequality follows.
\end{proof}

The integer $x$ given in the previous result satisfies a simpler set of congruences, which we describe in the following lemma.

\begin{lem}\label{system}
   Assume
      $\GCD(p,n) = \GCD(p',n)$
  and
   $\GCD(q,n) = \GCD(q',n)$
   and let $x$ be the integer given in Lemma \ref{=><-lem}.  Then both of the following hold:
   
   \begin{enumerate}
   \item[(1)] 
    $ \tfrac{rn}{pq} x + \tfrac{r}{q} $ and $  \tfrac{r}{p} x $ are both divisible by $\frac{r}{q'}$. 

\item[(2)]   For all primes $\pi$ dividing both $\frac{rn}{pq}$ and $\frac{r}{q}$,
we 
have
    $  \tfrac{rn}{pq\pi} x + \tfrac{r}{q\pi} \not \equiv 0 \modu \tfrac{r}{q'})$.
    \end{enumerate}

\end{lem}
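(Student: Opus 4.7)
The plan is to verify both assertions one prime at a time, using the three-case analysis inherited from Lemma \ref{=><-lem}. For each prime $\pi$ I will compute the $\pi$-adic valuations of $\frac{rn}{pq}x+\frac{r}{q}$, $\frac{r}{p}x$, and $\frac{r}{q'}$ (and in part (2), the valuations after dividing by $\pi$), and show that the first two dominate the third. Throughout I will use the identity $a_\pi+b_\pi=a'_\pi+b'_\pi$ that follows from $pq=p'q'$, together with the consequence of $\GCD(p,n)=\GCD(p',n)$ and $\GCD(q,n)=\GCD(q',n)$ that $\min(a_\pi,d_\pi)=\min(a'_\pi,d_\pi)$ and $\min(b_\pi,d_\pi)=\min(b'_\pi,d_\pi)$ for every prime $\pi$. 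Write $v_\pi$ for the $\pi$-adic valuation.

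For part (1), fix a prime $\pi$ and split on the three cases. If $a_\pi=a'_\pi$ (hence $b_\pi=b'_\pi$), the congruence $x\equiv 0\pmod{\pi^{a_\pi+1}}$ forces both $v_\pi(\frac{r}{p}x)$ and $v_\pi(\frac{rn}{pq}x)$ to exceed $c_\pi-b'_\pi$, so $v_\pi(\frac{rn}{pq}x+\frac{r}{q})=v_\pi(\frac{r}{q})=c_\pi-b'_\pi$ and (1) holds at $\pi$. If $a_\pi>a'_\pi$, then $v_\pi(x)=a'_\pi-d_\pi$, and a short calculation using $a'_\pi+b'_\pi=a_\pi+b_\pi$ yields $v_\pi(\frac{rn}{pq}x)=c_\pi-b'_\pi$ and $v_\pi(\frac{r}{p}x)=(c_\pi-a_\pi)+(a'_\pi-d_\pi)$, which is $\ge c_\pi-b'_\pi$ because the minimum relation among $b_\pi,b'_\pi,d_\pi$ forces $b_\pi\ge d_\pi$; since $v_\pi(\frac{r}{q})=c_\pi-b_\pi>c_\pi-b'_\pi$, the sum $\frac{rn}{pq}x+\frac{r}{q}$ again has valuation $\ge c_\pi-b'_\pi$. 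Finally, if $a_\pi<a'_\pi$, the third congruence of Lemma \ref{=><-lem} gives $v_\pi(\frac{rn}{pq}x+\frac{r}{q})=c_\pi-b'_\pi$ directly, and extracting $v_\pi(x)=a_\pi-d_\pi$ from this congruence (using that $v_\pi(\frac{r}{q})<c_\pi-b'_\pi$) shows $v_\pi(\frac{r}{p}x)=c_\pi-d_\pi\ge c_\pi-b'_\pi$, where the last inequality uses the minimum relation to conclude $b'_\pi\ge d_\pi$.

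For part (2), fix a prime $\pi$ dividing both $\frac{rn}{pq}$ and $\frac{r}{q}$. The identity
\[\tfrac{rn}{pq\pi}x+\tfrac{r}{q\pi}=\tfrac{1}{\pi}\Bigl(\tfrac{rn}{pq}x+\tfrac{r}{q}\Bigr)\]
reduces the problem to showing $v_\pi(\frac{rn}{pq}x+\frac{r}{q})<c_\pi-b'_\pi+1$, since a drop by one will then give valuation strictly below $v_\pi(\frac{r}{q'})=c_\pi-b'_\pi$. In the cases $a_\pi\ne a'_\pi$ this is already established above (the valuation equals $c_\pi-b'_\pi$ exactly). In the case $a_\pi=a'_\pi$ the expression $\frac{rn}{pq}x+\frac{r}{q}$ has valuation $c_\pi-b_\pi=c_\pi-b'_\pi$ because the $\frac{r}{q}$ term dominates (as $v_\pi(\frac{rn}{pq}x)>c_\pi-b_\pi$), which suffices.

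The main obstacle is bookkeeping: one must track the three cases and the various inequalities among $a_\pi,a'_\pi,b_\pi,b'_\pi,c_\pi,d_\pi$ carefully, particularly deducing $b_\pi\ge d_\pi$ (case $a_\pi>a'_\pi$) and $b'_\pi\ge d_\pi$ (case $a_\pi<a'_\pi$) from the equal-gcd hypotheses. Once these are in place, the calculations of valuations are routine.
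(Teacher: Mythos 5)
Your proof is correct and follows essentially the same route as the paper: a prime-by-prime analysis split into the three cases of Lemma \ref{=><-lem}, comparing $\pi$-adic valuations of $\tfrac{rn}{pq}x+\tfrac{r}{q}$, $\tfrac{r}{p}x$, and $\tfrac{r}{q'}$ (the paper phrases the same computations as congruences modulo $\pi^{c_\pi-b'_\pi}$). The only difference is one of detail: you spell out part (1) and the inequalities $b_\pi\ge d_\pi$, $b'_\pi\ge d_\pi$ that the paper dismisses as straightforward, which is fine.
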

\begin{proof}
To prove that (1) holds, it suffices to show   $\tfrac{rn}{pq} x + \tfrac{r}{q} \equiv   \tfrac{r}{p} x \equiv 0 \modu \pi^{c_\pi - b_\pi'})$ for all primes $\pi$ dividing $rn$.
Deriving  this set of  congruences from Lemma \ref{=><-lem} is straightforward.

Let $\pi$ be a prime dividing both $\frac{rn}{pq}$ and $\frac{r}{q}$.
To complete the lemma, it is enough to show that $\tfrac{rn}{pq\pi} x + \tfrac{r}{q\pi} \not \equiv 0 \modu \pi^{c_\pi-b_\pi'})$.   The following statements affirming this are again straightforward consequences of Lemma \ref{=><-lem}.
 First, if $a_{\pi}=a'_{\pi}$ then
 \[ \tfrac{rn}{pq\pi}x\equiv 0 \modu \pi^{c_{\pi}-b'_{\pi}}) \qquad\text{but}\qquad \tfrac{r}{q\pi}\not \equiv 0 \modu \pi^{c_{\pi}-b'_{\pi}}).\] On the other hand, if $a_{\pi}>a'_\pi$, so that $b_\pi<b'_\pi$, then  similarly 
 \[\tfrac{rn}{pq\pi}x\not \equiv 0 \modu \pi^{c_\pi-b'_\pi}) \qquad\text{but}\qquad \tfrac{r}{q\pi}\equiv 0 \modu \pi^{c_\pi-b'_\pi}).\] Finally, if $a_{\pi}<a'_\pi$, we have 
 $\tfrac{rn}{pq\pi}x+\tfrac{r}{q\pi}\equiv \pi^{c_{\pi}-b'_\pi-1} \not\equiv 0\modu \pi^{c_{\pi}-b'_\pi}).$
\end{proof}

The next theorem  describes a surjective homomorphism $G(r,p,n) \to G(r,p',q',n)$ which will descend to the promised  isomorphism $G \xrightarrow{\sim} G'$.

\begin{thm}
   Assume
      $\GCD(p,n) = \GCD(p',n)$
  and
   $\GCD(q,n) = \GCD(q',n)$
   and let $x$ be the integer given in Lemma \ref{system}. Then the map defined, with our usual slight abuse of notation, by
\[\begin{array}{cccc}
   \varphi: & G(r,p,n)&\longrightarrow &G(r,p',q',n)\\
& g&\mapsto& g\cdot c^{\frac{\Delta(g)}{p}x}
\end{array}
\]
is a surjective group homomorphism whose kernel is the cyclic central subgroup $C_q = \langle c^{r/q}\rangle$ of $G(r,p,n)$.

\end{thm}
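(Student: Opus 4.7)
My approach is to verify four properties of $\varphi$ in sequence: well-definedness as a map $G(r,p,n) \to G(r,p',q',n)$, the homomorphism property, the containment $C_q \subseteq \ker \varphi$, and surjectivity, which will force $\ker\varphi = C_q$ by an order count.

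For well-definedness, given $g \in G(r,p,n)$ I fix an integer lift $m$ of $\Delta(g)/p \in \ZZ_{r/p}$ and compute $\Delta(g \cdot c^{mx}) = m(p+nx)$ from the fact that $\Delta$ is a homomorphism with $\Delta(c) = n$. The first divisibility of Lemma \ref{system}(1), rewritten, gives $p' \mid p + nx$, so this value lies in $p'\ZZ_r$ and $g \cdot c^{mx} \in G(r,p',n)$. Changing the lift by $r/p$ alters the product by $c^{rx/p}$, which the second divisibility of Lemma \ref{system}(1) places in $C_{q'} = \langle c^{r/q'}\rangle$; hence $\varphi$ descends to a well-defined map into $G(r,p',q',n)$. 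The homomorphism property is then immediate from the centrality of $c$ and the additivity of $\Delta$. Writing $p+nx = p's$ for an integer $s$ as justified above, a direct substitution gives $\varphi(c^{r/q}) = c^{r(p+nx)/(pq)} = c^{rs/q'} \in C_{q'}$, which shows $C_q \subseteq \ker\varphi$.

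At this point $\varphi$ factors through a homomorphism $\bar\varphi : G(r,p,n)/C_q \to G(r,p',q',n)$ of source and target of equal order $r^n n!/(pq)$, so it suffices to establish surjectivity. Since $\varphi$ fixes every element of trivial $\Delta$-value, the image automatically contains the image of $S_n$ and the subgroup $N(r,r,n)$ of diagonal elements summing to $0$. The cokernel of $\langle S_n, N(r,r,n)\rangle$ inside $G(r,p',q',n)$ is cyclic, and its elements are distinguished by the value of $\Delta$ modulo $\Delta(C_{q'}) = \langle nr/q'\rangle$ inside $p'\ZZ_r$. Applying $\varphi$ to an element $g \in G(r,p,n)$ with $\Delta(g) = p$ yields $\Delta(\varphi(g)) = p's$, so $\bar\varphi$ is surjective precisely when $p's$ generates $p'\ZZ_r/\langle nr/q'\rangle$; prime-by-prime analysis reduces this to the condition that $s$ shares no prime factor with $\gcd(rn/(pq), r/q)$. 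Lemma \ref{system}(2) delivers exactly this coprimality: after clearing denominators, its conclusion rephrases as the assertion that $s$ is coprime to each prime dividing both $rn/(pq)$ and $r/q$. Carrying out the translation between Lemma \ref{system}(2) and the cyclic-generator condition is the most delicate step; once done, surjectivity follows, and the order count forces $\ker\varphi = C_q$.
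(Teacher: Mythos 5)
Your treatment of well-definedness, the homomorphism property, and the containment $C_q\subseteq\ker\varphi$ is correct and essentially identical to the paper's. The gap is in the surjectivity step. Writing $p+nx=p's$, the image of $\varphi$ contains the image of $G(r,r,n)=\ker\Delta$, and it equals all of $G(r,p',q',n)$ exactly when $\langle p's\rangle+\langle \tfrac{nr}{q'}\rangle=p'\ZZ_r$ inside $\ZZ_r$, i.e.\ when $s$ is coprime to $\GCD\big(\tfrac{rn}{pq},\tfrac{r}{p'}\big)$ --- note the second entry is $\tfrac{r}{p'}$, not $\tfrac{r}{q}$. Your ``prime-by-prime analysis'' replaces this by coprimality of $s$ to $\GCD\big(\tfrac{rn}{pq},\tfrac{r}{q}\big)$, which is indeed what Lemma \ref{system}(2) controls, but the two gcd's agree at a prime $\pi$ only when $\GCD(p,n)$ and $\GCD(q,n)$ contain $\pi$ with the same multiplicity, and that is not implied by the hypotheses. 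Concretely, take $r=6$, $n=4$, $(p,q)=(3,2)$, $(p',q')=(1,6)$: the hypotheses hold, the quotient $p'\ZZ_r/\langle\tfrac{nr}{q'}\rangle$ has order $2$, so surjectivity requires $s$ to be odd, whereas $\GCD\big(\tfrac{rn}{pq},\tfrac{r}{q}\big)=\GCD(4,3)=1$ and Lemma \ref{system}(2) is vacuous, so your argument establishes nothing at the prime $2$. The conclusion can be rescued: if $\pi$ divides $\GCD\big(\tfrac{rn}{pq},\tfrac{r}{p'}\big)$ but not $\tfrac{r}{q}$, then $\pi$ appears in $n$ with strictly larger multiplicity than in $p$, and the hypothesis $\GCD(p,n)=\GCD(p',n)$ forces $\pi$ to appear in $p$ and $p'$ with equal multiplicity, so $p+nx$ has the same $\pi$-multiplicity as $p$ and $\pi\nmid s$ for every integer $x$ (in the example, $s=3+4x$ is automatically odd). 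But this supplementary argument is absent from your proof, and the equivalence you assert is false as stated.

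For comparison, the paper runs the argument in the opposite order: it first computes $\ker\varphi=\langle c^{r/q}\rangle$ directly --- the kernel lies in the scalar subgroup, a strictly larger kernel would contain $c^{r/(q\pi)}$ for some prime $\pi$ dividing $\tfrac{r}{q}$, and such an element lies in $G(r,p,n)$ only if $\pi$ also divides $\tfrac{rn}{pq}$, so Lemma \ref{system}(2) applies to exactly the primes that arise --- and then obtains surjectivity by the cardinality count. That ordering is what makes Lemma \ref{system}(2) sufficient; your image-first route is workable but needs the extra multiplicity argument above at the primes that Lemma \ref{system}(2) does not see.
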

\begin{proof}
The map $\varphi$ is well-defined as a consequence of the following observations:
\begin{itemize}

 \item The product $g\cdot  c^{\frac{\Delta(g)}{p}x}$ belongs to $G(r,p',n)$, since multiplying the congruence $\frac{rn}{pq}x+\frac{r}{q}\equiv 0 \modu \frac{r}{q'})$ given in Lemma \ref{system} by $pq=p'q'$  shows that
 $nx+p$ is divisible by $p'$, whence
$\Delta\(g\cdot  c^{\frac{\Delta(g)}{p}x}\)=\tfrac{\Delta(g)}{p}(nx+p) \equiv 0 \modu p').$

  \item The image of $g\cdot  c^{\frac{\Delta(g)}{p}x}$ in $G(r,p',q',n)$ does not depend on the representative chosen for $\Delta(g)$ modulo $r$,
 since the congruence   $\frac{r}{p} x \equiv 0 \modu \frac{r}{q'})$ given in Lemma \ref{system}.
implies  that
 $c^{\frac{r}{p} x}$ belongs to the subgroup of $G(r,p',n)$ generated by $c^{\frac{r}{q'}}$.
 
\end{itemize}
The  fact that $\Delta(g h)=\Delta(g)+\Delta(h)$ shows that $\varphi$ is a group homomorphism, and we have  $c^{\frac{r}{q}} \in \ker \varphi$ since the congruence $\frac{rn}{pq} x + \frac{r}{q} \equiv 0 \modu \frac{r}{q'})$   implies that $\varphi(c^{\frac{r}{q}})$ represents the identity in $G(r,p',q',n)$.

Thus $ \langle c^{\frac{r}{q}}\rangle \subset \ker \varphi$. To show that this inclusion is an equality, note that $\ker \varphi$ is  necessarily a  subgroup of the cyclic group of scalar elements in $G(r,p,n)$.
Hence, if $\langle c^{\frac{r}{q}}\rangle$ is a proper subgroup of $\ker \varphi$, then by basic properties of cyclic groups and their subgroups, 
 there must exist a prime $\pi$ dividing $\frac{r}{q}$ such that $c^{\frac{r}{q\pi}}\in \ker \varphi$. 
For $c^{\frac{r}{q\pi}}$ to belong to $G(r,p,n)$,  the prime $\pi$ must also divide $\frac{rn}{pq}$; in this case, however, part (2) of Lemma \ref{system}  
implies that $\varphi(c^{\frac{r}{q\pi}})=c^{\frac{r}{q\pi}+\frac{rn}{pq\pi}x}\neq 1 \in G'$. We conclude that $\ker \varphi=\langle c^{\frac{r}{q}}\rangle$. The fact that $\varphi$ is surjective then follows by cardinality reasons.
\end{proof}

The following corollary is an immediate consequence of the preceding result.

\begin{cor}
   If 
   $\GCD(p,n) = \GCD(p',n)$
  and
   $\GCD(q,n) = \GCD(q',n)$
and $x$ is the integer given in Lemma \ref{system} then the following map
is an isomorphism: \[\begin{array}{cccc}
   \varphi: & G(r,p,q,n)&\longrightarrow &G(r,p',q',n)\\
& g&\mapsto& g\cdot c^{\frac{\Delta(g)}{p}x}
\end{array}
\] 
\end{cor}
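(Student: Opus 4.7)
The plan is to deduce this corollary essentially formally from the preceding theorem, which already does all the real work. That theorem provides a surjective group homomorphism $\varphi : G(r,p,n) \to G(r,p',q',n)$ defined by the same formula $g \mapsto g \cdot c^{\frac{\Delta(g)}{p}x}$, with kernel exactly the cyclic central subgroup $C_q = \langle c^{r/q}\rangle$.

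My first step would be to invoke the first isomorphism theorem: since $\ker \varphi = C_q$, the map $\varphi$ factors through the quotient $G(r,p,n)/C_q = G(r,p,q,n)$, inducing a well-defined, injective homomorphism
\[
\bar\varphi : G(r,p,q,n) \longrightarrow G(r,p',q',n)
\]
given on coset representatives by the same formula $g \mapsto g \cdot c^{\frac{\Delta(g)}{p}x}$. The only thing one need check here is that this coset formula is unambiguous, but that is precisely the content of "$C_q \subseteq \ker\varphi$" from the theorem.

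Next I would address surjectivity. The cleanest route is to observe that $\bar\varphi$ is the composite of the quotient map $G(r,p,n) \twoheadrightarrow G(r,p,q,n)$ with a section of $\bar\varphi$, but a cleaner way is simply: $\varphi = \bar\varphi \circ \pi$ where $\pi : G(r,p,n) \twoheadrightarrow G(r,p,q,n)$ is the canonical projection, so the surjectivity of $\varphi$ forces the surjectivity of $\bar\varphi$. Alternatively, one may verify surjectivity by cardinality, using that $|G(r,p,q,n)| = r^n n!/(pq)$ and $|G(r,p',q',n)| = r^n n!/(p'q')$, which coincide by the hypothesis $pq = p'q'$.

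There is really no substantive obstacle here; the difficulty has already been absorbed into the prior theorem and into Lemmas \ref{=><-lem} and \ref{system}. The only small point of bookkeeping is to make sure that both the domain $G(r,p,q,n)$ and the target $G(r,p',q',n)$ are genuinely well-defined groups under the hypotheses $\GCD(p,n) = \GCD(p',n)$ and $\GCD(q,n) = \GCD(q',n)$ together with $pq = p'q'$ dividing $rn$; this is immediate from the definitions since the conditions "$q \mid r$ and $pq \mid rn$" hold for both parameter sets by assumption.
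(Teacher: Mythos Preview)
Your proposal is correct and matches the paper's approach: the paper states the corollary as an immediate consequence of the preceding theorem, and your application of the first isomorphism theorem is exactly what makes this immediate. The remark following the corollary in the paper confirms that the formula is to be read on coset representatives, which is precisely the well-definedness observation you make.
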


\begin{remark} Our notation here is abusive, and one should interpret our formula as meaning that $\varphi$ sends the image of $g \in G(r,p,n)$ in $G(r,p,q,n)$ to the image of the element $g\cdot   c^{\frac{\Delta(g)}{p}x}$ in $G(r,p',q',n)$.
\end{remark}

\section{Generalized involution models in rank two} 
\label{n=2-sect}

In this section we determine which of the projective reflection groups $G(r,p,q,2)$ have GIMs, proving Theorem \ref{n=2-thm} from the introduction.
Thus, fix positive integers $r,p,q$  with $p$ and $q$ dividing $r$ and $pq$ dividing $2r$. 
We represent elements of $G(r,p,q,2)$ as triples $(\pi;a,b) \in S_2 \times \ZZ_r \times \ZZ_r $ as in Section \ref{iso2-sect}. 
Here, we also let
 $\sigma$ denote the nontrivial permutation 
 \[\sigma=(1,2) \in S_2.\]
 %
To begin, from our results so far we have this lemma:


\begin{lem}\label{q1}
If $q$ or $r/q$ is odd then $G(r,1,q,2)$ has a GIM.
\end{lem}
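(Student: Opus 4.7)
The plan is to deduce this lemma from two results already in hand: Theorem \ref{thm1} (the GIM classification for complex reflection groups of the form $G(r,p,1,n)$) and Theorem \ref{n2-thm} (the $n=2$ isomorphism classification proved earlier in the paper). The key observation is that the lemma is really about a group of the form $G(r,1,q,2)$, which is the \emph{dual} of the complex reflection group $G(r,q,1,2)$, and Theorem \ref{thm1} is tailor-made for such groups.

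First I would apply Theorem \ref{thm1} to $G(r,q,1,2)$, which asserts that this group has a GIM if and only if $G(r,q,1,2) \cong G(r,1,q,2)$. Next I would check when that isomorphism holds using Theorem \ref{n2-thm}, taking $(p,q) \leftarrow (q,1)$ and $(p',q') \leftarrow (1,q)$. Here $p+p' = q+1$, $q+q' = 1+q$, and $\frac{r}{pq} = \frac{r}{q}$. Condition (i) of Theorem \ref{n2-thm} reduces to the requirement that $q+1$ be even, i.e.\ $q$ is odd; condition (ii) reduces to the requirement that $q+1$ and $\frac{r}{q}$ be odd, i.e.\ $q$ is even and $\frac{r}{q}$ is odd. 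Taken together, $G(r,q,1,2) \cong G(r,1,q,2)$ precisely when $q$ or $\frac{r}{q}$ is odd, which is exactly our hypothesis.

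Finally, since the existence of a GIM depends only on the isomorphism class of the group, and we have just shown $G(r,q,1,2)$ has a GIM under our hypothesis, the isomorphic group $G(r,1,q,2)$ also has a GIM. There is no real obstacle here: the lemma is a short dualization argument, and its role is presumably to serve as a base case for the larger classification in Theorem \ref{n=2-thm}, where one will need to promote GIMs on groups of the form $G(r,1,q,2)$ to GIMs on more general $G(r,p,q,2)$ via further quotienting or restriction arguments.
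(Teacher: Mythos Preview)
Your proposal is correct and follows essentially the same argument as the paper: show that $G(r,q,1,2)\cong G(r,1,q,2)$ under the hypothesis and then invoke Theorem~\ref{thm1}. The only cosmetic difference is that the paper cites Corollary~\ref{cor2} (the self-duality criterion) rather than Theorem~\ref{n2-thm} directly, but since Corollary~\ref{cor2} at $n=2$ is an immediate consequence of Theorem~\ref{n2-thm}, the two presentations are the same in substance.
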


\begin{proof}
If $q$ or $r/q$ is odd, then
$G(r,1,q,2) \cong G(r,q,1,2)$ by Corollary \ref{cor2} while $G(r,q,1,2)$ has a GIM by  Theorem \ref{thm1}.  
\end{proof}

Define $ \tau$ as in Section \ref{prelim-sect} as the involution of $G=G(r,p,q,2)$ given by the map $(\pi;a,b) \mapsto (\pi;-a,-b)$. The corresponding set of generalized involutions
\[\cI_{G,\tau}\eqdef \{ \omega \in G : \omega^{-1} = \tau(\omega)\}\]
 consists of elements of the form $(\pi;a,b) \in G$ such that either (i) $\pi=1\in S_2$ or 
(ii) $\pi\neq 1 \in S_2$ and $a=b$ or (iii) $\pi \neq 1 \in S_2$ and $q$ is even and $a=b+r/2 \in \ZZ_r$.
The following result shows that we only need to consider this automorphism to determine if $G$ has a GIM.

\begin{lem}\label{no-class-lem} The group $G(r,p,q,2)$ has no class-preserving outer automorphisms, and so
 $G(r,p,q,2)$  has a GIM if and only if it has a GIM with respect to $\tau$. 
\end{lem}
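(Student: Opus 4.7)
The two assertions in the lemma are linked: once the absence of class-preserving outer automorphisms is established, Lemma \ref{5.1sub} combined with Theorem \ref{f} yields the ``if and only if'' clause, because for $n=2$ we automatically have $\gcd(p,n)\leq 2$, placing us in case (i) of Theorem \ref{f} so that equality $|\cI_{G,\tau}|=\sum_{\psi}\psi(1)$ always holds. The whole content of the lemma therefore reduces to proving that every class-preserving automorphism $\phi\in\Aut(G)$, where $G=G(r,p,q,2)$, is inner. The plan is to normalize $\phi$ by composing with inner automorphisms until $\phi$ acts as the identity on a generating set.

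Write $N=N(r,p,q,2)$ for the diagonal normal subgroup, so $G=N\sqcup N\sigma$ where $\sigma=(\sigma;0,0)$. Since $N$ is normal it is a union of conjugacy classes, so $\phi(N)=N$. A direct computation with the multiplication rule from Section \ref{iso2-sect} shows that the conjugacy class in $G$ of $(1;a,b)\in N$ equals $\{(1;a,b),(1;b,a)\}$, obtained by conjugation with $\sigma$. Therefore $\phi(x)\in\{x,\sigma x\sigma^{-1}\}$ for every $x\in N$. The two sets
\[ F=\{x\in N:\phi(x)=x\}\quad\text{and}\quad F'=\{x\in N:\phi(x)=\sigma x\sigma^{-1}\} \]
are subgroups of $N$ (each is the equalizer of two homomorphisms $N\to N$) whose union is $N$. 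By the elementary fact that a group is never a union of two proper subgroups, either $F=N$ or $F'=N$; in the latter case we compose $\phi$ with the inner automorphism by $\sigma$, so we may henceforth assume $\phi|_N=\mathrm{id}$.

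Next, $\phi(\sigma)$ is conjugate to $\sigma$, so write $\phi(\sigma)=\pi_0\sigma\pi_0^{-1}$ for some $\pi_0\in G$. A short calculation shows $\sigma\in Z_G(\sigma)$, so the conjugator is only determined up to right multiplication by $Z_G(\sigma)$; we may therefore replace $\pi_0$ with $\pi_0\sigma$ if necessary and arrange $|\pi_0|=1$, i.e.\ $\pi_0\in N$. Composing $\phi$ with the inner automorphism by $\pi_0^{-1}$ gives $\phi(\sigma)=\sigma$; since $\pi_0\in N$ and $N$ is abelian, this new inner automorphism acts trivially on $N$, preserving the previous normalization. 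Because $N\cup\{\sigma\}$ generates $G$, the resulting $\phi$ is the identity on $G$, and the original $\phi$ was inner.

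The main obstacle is the analysis in the last paragraph: the $C_q$-equivalence inside the quotient makes it a little subtle to verify that the required diagonal conjugator $\pi_0\in N$ really exists. Concretely, one must argue that any representative $\pi_0\sigma\pi_0^{-1}$ of the $G$-conjugacy class of $\sigma$ can be realized using a diagonal $\pi_0$, which boils down to checking that the centralizer $Z_G(\sigma)$ contains an element of sign $\sigma$ (namely $\sigma$ itself), together with a careful lift to $G(r,p,2)$. Once this is in hand, the remaining steps are essentially formal manipulations of the multiplication rule.
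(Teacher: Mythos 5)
Your proof is correct, and it takes a genuinely different and cleaner route than the paper's. The paper also exploits the semidirect product structure, but it only normalizes the images of the three generators $s$, $t$, $\sigma$, and is therefore left with a possible ``mixed'' case $\nu'(s)=s$, $\nu'(t)=\sigma t\sigma$; disposing of that case forces congruences restricting to $r\in\{2,4,6\}$ and $p\in\{r/2,r/4\}$, and those residual small parameters are then checked by hand. Your observation that the equalizer sets $F$ and $F'$ are subgroups of $N$ whose union is $N$ (using that the class of a diagonal element $(1;a,b)$ is exactly $\{(1;a,b),(1;b,a)\}$), combined with the elementary fact that a group is never the union of two proper subgroups, eliminates the mixed case uniformly in $r,p,q$: the restriction of a class-preserving automorphism to $N$ is globally the identity or globally $\Ad(\sigma)$. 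The final step---choosing the conjugator of $\sigma$ inside $N$, possible because $\sigma$ centralizes itself and $N$ has index two---then finishes with no case analysis at all. Two minor remarks: the inference ``$N$ normal, hence a union of classes, hence $\phi(N)=N$'' is valid precisely because $\phi$ is class-preserving, which is worth stating; and the concern in your closing paragraph about lifting to $G(r,p,2)$ is unnecessary, since the conjugacy of $\phi(\sigma)$ with $\sigma$ and the adjustment of the conjugator by $\sigma\in Z_G(\sigma)$ take place entirely inside the quotient group. Note finally that your argument is genuinely special to $n=2$ (for larger $n$ the class of a diagonal element has more than two conjugates and the covering-by-subgroups step fails), which is consistent with Proposition \ref{no-class-prop}: some groups $G(r,p,q,n)$ with $n>2$ do have class-preserving outer automorphisms. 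Your deduction of the GIM equivalence from Lemma \ref{5.1sub} together with case (i) of Theorem \ref{f} is the same as the paper's.
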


\begin{proof}
The result holds if $q=1$ by \cite[Proposition 3.1]{MM}, so we may assume $q>1$ (so that also $r>1$).
The group $G=G(r,p,q,2)$ is generated by the three elements $s = (1;1,-1)$, $t=(1;p,0)$, and  $\sigma =((1,2);0,0)  \in S_2$. Suppose $\nu \in \Aut(G)$ is class-preserving; we argue that $\nu$ is inner. Since $G$ is a semidirect product of the abelian groups $S_2$ and $N(r,p,q,2)$,  for some integers $i,j,k,l$ we have 
\[  \nu(\sigma) = s^i t^j \sigma t^{-i} s^{-j} \qquad\text{and}\qquad \nu(s) = \sigma^k s \sigma^{-k}\qquad\text{and}\qquad \nu(t) = \sigma^l t \sigma^{-l}.\]
Let $x = s^i t^j \sigma^k \in G$ and define $\nu'$ as the automorphism $\nu' : g \mapsto x^{-1} \nu(g) x$. Then $\nu'$ is class-preserving with $\nu'(s) = s$ and $\nu'(\sigma) = \sigma$ and $\nu'(t) \in \{ t, \sigma t \sigma \}$, and to show that $\nu$ is inner it suffices to show that $\nu'$ is inner.

Certainly $\nu'$ is inner if $\nu'(t) = t$ or $p=r$ (in which case $t=1$) so suppose $\nu'(t) = \sigma t \sigma$ and $p<r$.
Let $z = st = (1;p+1,-1)$ and $z' = \sigma z \sigma=  (1;-1,p+1)$ so that $\{ z,z'\}$ comprises a conjugacy class in $G$. Then $\nu'(z) =\nu'(s) \nu'(t)= (1;1,p-1) \in \{ z,z'\}$ since $\nu'$ is class-preserving. This implies that for some integer $k$ either 
\[ p \equiv -p-2 \equiv k \tfrac{r}{q} \modu r)\qquad\text{or}\qquad   -2 \equiv 2 \equiv k\tfrac{r}{q} \modu r).\]
The first congruence implies $p \in \{ r-1, \frac{r}{2} - 1\}$, in which case since $p$ divides $r$ we must have $r \in \{2,4,6\}$,
while the second congruence implies $r \in \{2,4\}$.

In either case we must have $r \in \{ 2,4,6\}$.
We now observe that applying $\nu'$ to both sides of the identity $t \sigma t^{-1} \sigma = s^p$ gives $s^{-p} = \sigma t \sigma t^{-1} = s^p$.  This equation holds in $G$ if and only if for some integer $k$ we have \[- 2p \equiv 2p \equiv k \tfrac{r}{q} \modu r).\]  
The first part of this congruence implies $4p$ is a multiple of $r$, so since $p<r$ we must have $p \in \{r/2, r/4\}$.
 If $p=r/2$ then $q \in \{2,4\}$ since $q$ divides $2r/p$ and we assume $q>1$; in this case $r/2$ is a multiple of $r/q$ so we have $t = (1;\frac{r}{2},0) = (1;0,\frac{r}{2}) = \sigma t \sigma $ in $G$, whence $\nu' =1$ in inner. 
On the other hand, if $p=r/4$ then we must have $r=4$ and $p=1$. In this case $t$ and $\sigma$ generate $G$, so $\nu'$ must coincide with the inner automorphism $g \mapsto \sigma g \sigma$ since it does so on the generators $t$, $\sigma$.

We conclude that all class-preserving automorphism of $G$ are inner. The last part of the lemma now follows from Lemma \ref{5.1sub}.
\end{proof}

This result leads to a less trivial lemma.

\begin{lem} \label{q3} If $q$ and $r/q$ are both even and $(r,q) \neq (4,2)$, then $G(r,1,q,2)$ does not have a GIM.
\end{lem}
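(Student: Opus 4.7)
By Lemma~\ref{no-class-lem} it suffices to show that $G = G(r,1,q,2)$ admits no GIM with respect to the involution $\tau$. Suppose for contradiction that a $\tau$-GIM $\{(H_\omega,\lambda_\omega)\}_\omega$ exists. The strategy is to enumerate the $\tau$-twisted conjugacy classes on $\cI_{G,\tau}$ explicitly and then to derive a numerical obstruction by counting which linear characters of $G$ can appear in $\sum_\omega \Ind_{H_\omega}^G \lambda_\omega$.

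First I would compute the twisted conjugation action on $\cI_{G,\tau}$. A direct calculation shows that $g\cdot (1;c,d)\cdot \tau(g)^{-1}$ equals $(1;c+2a,d+2b)$ if $g=(1;a,b)$ and $(1;d+2b,c+2a)$ if $g=(\sigma;a,b)$, and analogously $g\cdot (\sigma;e,f)\cdot \tau(g)^{-1}$ equals $(\sigma;e+a+b,f+a+b)$ or $(\sigma;a+b+f,a+b+e)$. Since $r/q$ is even, the subgroup $C_q$ lies inside $2N$, so the orbits on $N$ correspond to the three $S_2$-orbits of $N/2N \cong (\ZZ_2)^2$ with representatives $(1;0,0)$, $(1;1,1)$ and $(1;0,1)$, while the $\sigma$-involutions of types (ii) and (iii) from Section~\ref{prelim-sect} each form a single orbit. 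The resulting five class representatives have stabilizers of orders $8,8,4,2r,2r$ and class sizes summing to $|\cI_{G,\tau}| = r^2/q + 2r/q$, which agrees with $\sum_\psi \psi(1)$ via Theorem~\ref{f}.

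To derive the contradiction I would apply Frobenius reciprocity: the number of linear characters of $G$ appearing in $\Ind_{H_\omega}^G \lambda_\omega$ equals the number of linear characters of $G$ restricting to $\lambda_\omega$ on $H_\omega$, which is $|G:H_\omega[G,G]|$ when $\lambda_\omega$ lies in the image of the restriction $\Hom(G,\CC^\times) \to \Hom(H_\omega,\CC^\times)$, and zero otherwise. By the linear-character count in Section~\ref{iso1-sect}, $|G/[G,G]| = 4r/q$, so a GIM must produce each of the $4r/q$ linear characters of $G$ exactly once. Computing $[G,G]$ explicitly and then each index $|G:H_\omega[G,G]|$, one checks that the maximum possible sum of contributions is strictly less than $4r/q$ whenever $(r,q) \neq (4,2)$, providing the desired contradiction; in the exceptional case $|G|=16$ and the indices happen to align.

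The principal difficulty will be identifying $[G,G]$ and the indices $|G:H_\omega[G,G]|$ precisely enough to locate the exception $(r,q)=(4,2)$. A cleaner route may be to treat the two $\sigma$-classes separately: their stabilizers, both of order $2r$, contribute at most $2r/q$ linear characters between them, so the remaining $2r/q$ linear characters must come from the three $N$-classes, whose total $[G:H_\omega]$ equals $r^2/q$; a careful bound on the possible extensions from the two order-$8$ stabilizers and the order-$4$ stabilizer should yield the obstruction unless $r=4$.
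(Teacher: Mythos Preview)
Your enumeration of the five $\tau$-twisted classes and their stabilizer orders $8,8,4,2r,2r$ is correct, and the approach via Lemma~\ref{no-class-lem} is the right start. However, the linear-character count does \emph{not} produce the contradiction you claim. Carrying out your own computation: one finds $[G,G]=\{(1;c,-c)\}$ has order $r/2$, the two $\sigma$-stabilizers coincide with $H_\sigma=G(r,\tfrac{r}{q},q,2)$, the two order-$8$ stabilizers coincide with a common $H_0\subset H_\sigma$, and $H_0[G,G]=H_\sigma$. The indices $|G:H_\omega[G,G]|$ come out to $r/q,\,r/q,\,r/q,\,r/q,\,2r/q$, summing to $6r/q$, not to something below $4r/q$. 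Worse, the partition constraint is satisfiable: four disjoint $K_0$-cosets of size $r/q$, or two $K_0$-cosets together with one $K_1$-coset of size $2r/q$, cover $\Hom(G,\CC^\times)$ exactly. So counting linear constituents alone cannot exclude a GIM, and your ``maximum possible sum strictly less than $4r/q$'' assertion is false for every $(r,q)$.

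The paper's proof takes a different and more direct route. It isolates the single class of $\omega=(1;0,1)$, whose stabilizer is $C_{G,\tau}(\omega)\cong\ZZ_2\times\ZZ_2$, and shows that for \emph{every} linear character $\lambda^{\epsilon,\nu}$ of this stabilizer, the induced character $\Ind_{C_{G,\tau}(\omega)}^G(\lambda^{\epsilon,\nu})$ fails to be multiplicity-free. The obstruction comes from the degree-$2$ irreducibles $\chi^{x,y}$: each restricts to $C_{G,\tau}(\omega)$ as $2\lambda^{\epsilon,\nu}$ for some $(\epsilon,\nu)$, and when $(r,q)\neq(4,2)$ every pair $(\epsilon,\nu)$ is hit. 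Thus no choice of $\lambda_\omega$ for this one class can participate in a GIM. The moral is that the obstruction lives in the degree-$2$ part of $\Irr(G)$, not in the abelianization; your linear-character bookkeeping misses it entirely.
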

 
\begin{proof} Let $G = G(r,1,q,2)$. Assuming both $q$ and $r/q$ are even integers, the $\tau$-twisted centralizer of the generalized involution $\omega\eqdef (1;0,1) \in G$ is  the subgroup
\[C_{G,\tau}(\omega) = \left\{ (1;0,0), (1;\tfrac{r}{2q},\tfrac{r}{2q}), (1;0,\tfrac{r}{2}), (1;\tfrac{r}{2q},\tfrac{r}{2q}+\tfrac{r}{2}) \right\}
\cong \ZZ_2 \times \ZZ_2.\]
This subgroup has four linear characters, given by the homomorphisms $\lambda^{\epsilon,\nu} : C_{G,\tau}(\omega) \to \CC^\times$ for $\epsilon,\nu \in \{0,1\}$   with $(1;\tfrac{r}{2q},\tfrac{r}{2q}) \mapsto (-1)^{\epsilon}$ and $(1;0,\tfrac{r}{2})\mapsto (-1)^\nu$. To prove the lemma, it suffices to show that if $(r,q) \neq (4,2)$ then $\Ind_{C_{G,\tau}(\omega)}^G(\lambda^{\epsilon,\nu})$ is never multiplicity-free.

From basic character theory, all the irreducible characters of $G$ have degree 1 or 2, and the  degree 2 irreducible characters are the functions 
\[ \chi^{x,y} : (\pi;a,b) \mapsto \begin{cases} \zeta_r^{ax+by} + \zeta_r^{ay+bx},&\text{if }\pi = 1 \in S_2, \\ 0,&\text{if }\pi \neq 1 \in S_2,\end{cases}\qquad\text{for }(\pi;a,b) \in G,\]
where $\zeta_r = \exp(2\pi i /r)$ is a primitive $r^{\mathrm{th}}$ root of unity, and
where $x$ and $y$ range over all integers with 
$0\leq x <y < r$ and $x+y \equiv 0 \modu q)$.
From this formula, it is easy to see that
\[ \Res_{C_{G,\tau}(\omega)}^{G} (\chi^{x,y}) =2 \lambda^{\epsilon,\nu},
\qquad\text{where }\epsilon \equiv \tfrac{x+y}{q} \modu 2) \text{ and }\nu \equiv x \equiv y \modu 2).
\]
If $(r,q) = (4,2)$ then we must have $(x,y) \in \{(0,2),(1,3)\}$ whence  the restriction of a degree 2 irreducible character of $G$ to $C_{G,\tau}(\omega)$ is either $2\lambda^{1,0}$ or $2\lambda^{0,1}$. 

If $(r,q) \neq (4,2)$, then for any $\epsilon,\nu \in \{0,1\}$, one can find integers $x,y$ with $0\leq x <y < r$ and $x+y \equiv 0 \modu q)$ such that $\epsilon \equiv \tfrac{x+y}{q} \modu 2)$ and $\nu \equiv x \equiv y \modu 2)$.  For example, if $\epsilon = \nu = 0$ then we can take $(x,y)=(2,2q-2)$ if $q\geq 4$ or $(x,y)=(2,4q-2)$ if $q=2$ and $r\geq 8$.  By Frobenius reciprocity, this implies  that if $(r,q)\neq (4,2)$ then $\Ind_{C_{G,\tau}(\omega)}^G (\lambda^{\epsilon,\nu})$  fails to be multiplicity-free for all $\epsilon,\nu \in \{0,1\}$. Hence, by the preceding lemma, $G$ does not have a  GIM if $q$ and $r/q$ are both even and $(r,q)\neq (4,2)$. 
\end{proof}

The only group $G(r,1,q,2)$ to which Lemmas \ref{q1} and \ref{q3} do not apply is the non-abelian group of order sixteen $G(4,1,2,2)$. This group does have a GIM, though writing down and carefully checking all the data specifying this is a tedious and not very instructive exercise, which we therefore omit.
Combining this fact with Lemmas \ref{q1} and \ref{q3} gives the following proposition.

\begin{prop}\label{q-prop}
$G(r,1,q,2)$ has a GIM if and only if $(r,q) = (4,2)$ or $q$ is odd or $r/q$ is odd.
\end{prop}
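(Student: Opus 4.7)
The proposition is essentially a direct synthesis of the three preceding facts: Lemma \ref{q1}, Lemma \ref{q3}, and the asserted (but unverified) statement that $G(4,1,2,2)$ admits a GIM. My plan is simply to package these into the claimed if-and-only-if.

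For the ``if'' direction, I would split into three cases corresponding to the three disjuncts on the right-hand side. If $q$ is odd or $r/q$ is odd, Lemma \ref{q1} immediately produces a GIM. If $(r,q) = (4,2)$, I would cite the remark preceding the proposition that the non-abelian group of order sixteen $G(4,1,2,2)$ does have a GIM (the authors decline to write this out explicitly, and I would do the same in the proof, noting that one can verify the data directly).

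For the ``only if'' direction, I would take the contrapositive: suppose none of the three conditions holds, i.e.\ $q$ is even, $r/q$ is even, and $(r,q) \neq (4,2)$. Then the hypotheses of Lemma \ref{q3} are satisfied, and so $G(r,1,q,2)$ has no GIM.

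The main obstacle, and the reason the proof can be so short, has already been absorbed into Lemma \ref{q3}, whose proof crucially depends on Lemma \ref{no-class-lem} (no class-preserving outer automorphisms in rank two, hence it suffices to test against $\tau$) together with the explicit character-theoretic calculation showing that the induced characters $\mathrm{Ind}_{C_{G,\tau}(\omega)}^G(\lambda^{\epsilon,\nu})$ are never multiplicity-free. Thus the only ``missing'' content for the current proposition is the verification for $G(4,1,2,2)$, which is the exceptional small case; I would follow the authors in treating this as an easy but tedious direct check and not reproduce it. The overall proof is therefore a two-line assembly argument.
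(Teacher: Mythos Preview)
Your proposal is correct and matches the paper's approach exactly: the paper states the proposition as an immediate consequence of Lemmas \ref{q1} and \ref{q3} together with the (omitted) direct verification that $G(4,1,2,2)$ has a GIM, which is precisely the three-case assembly you describe.
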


Now we consider all groups of the form $G(r,p,q,2)$.
We can treat several cases at once using the preceding proposition and our knowledge of when $G(r,p,q,2) \cong G(r,p',q',2)$.

\begin{lem}\label{n2-lem} 
If $p$ and $q$ are not both even, then $G(r,p,q,2)$ has a GIM if and only if (i) $p$ and $q$ are both odd, (ii) $\frac{r}{pq}$ is an odd integer, or (iii) $(r,p,q) = (4,1,2)$.
\end{lem}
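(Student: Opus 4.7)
The plan is to reduce $G=G(r,p,q,2)$ via Theorem \ref{n2-thm} to a group in which one of the parameters has been normalized to $1$, and then invoke either Proposition \ref{q-prop} or the combination of Theorem \ref{thm1} with Corollary \ref{cor2}. Before anything else, I will record the elementary observation that when at least one of $p,q$ is odd, the divisor $d=\gcd(p,q)$ is odd, and the hypothesis $pq\mid 2r$ together with $p,q\mid r$ upgrades to $pq\mid r$: writing $r=\lcm(p,q)\cdot k$, the relation $pq\mid 2r$ becomes $d\mid 2k$, forcing $d\mid k$ since $d$ is odd. This ensures that the target groups $G(r,1,pq,2)$ and $G(r,pq,1,2)$ are well-defined throughout the argument.

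I then split into three sub-cases according to the parities of $p$ and $q$. If both are odd, condition (i) of Theorem \ref{n2-thm} applied with $(p',q')=(1,pq)$ gives $G\cong G(r,1,pq,2)$ (both $p+1$ and $q+pq=q(1+p)$ are even), and since $pq$ is odd Proposition \ref{q-prop} immediately produces a GIM; this settles case (i) of the lemma. If $p$ is odd and $q$ is even, the same isomorphism $G\cong G(r,1,pq,2)$ still holds via Theorem \ref{n2-thm}(i), but now $pq$ is even, and Proposition \ref{q-prop} delivers a GIM exactly when $(r,pq)=(4,2)$---which forces $(r,p,q)=(4,1,2)$---or when $r/pq$ is odd, matching conclusions (iii) and (ii).

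The remaining sub-case, $p$ even and $q$ odd, is slightly different because it is natural to normalize the other parameter: Theorem \ref{n2-thm}(i) applied with $(p',q')=(pq,1)$ applies (both $p+pq=p(1+q)$ and $q+1$ are even), giving $G\cong G(r,pq,1,2)$, which is a complex reflection group. By Theorem \ref{thm1} this group has a GIM if and only if it is self-dual, and by Corollary \ref{cor2} that holds precisely when $\gcd(pq,2)=\gcd(1,2)$---impossible since $p$ is even---or $r/pq$ is odd; so the criterion becomes $r/pq$ odd, matching conclusion (ii). Collating the three sub-cases gives the stated equivalence.

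The step that requires the most care is bookkeeping the $(4,1,2)$ exception: it can only arise when $pq=2$ in the sub-case $p$ odd, $q$ even, and I must check it is not spuriously triggered in the other two sub-cases (it cannot be, since $(r,p,q)=(4,1,2)$ has $p=1$ odd). Conceptually none of the work is hard once the isomorphisms are in place; the subtlety lies purely in matching up parities and in confirming that the easy clause (i) of Theorem \ref{n2-thm}, rather than the more restrictive clause (ii), is always available in the relevant direction.
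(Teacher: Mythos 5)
Your proof is correct and follows essentially the same route as the paper: reduce via the rank-two isomorphism theorem to $G(r,pq,1,2)$ or $G(r,1,pq,2)$ and then invoke Theorem \ref{thm1} (with Corollary \ref{cor2}) or Proposition \ref{q-prop}. The only differences are cosmetic — you normalize the both-odd case to $G(r,1,pq,2)$ rather than $G(r,pq,1,2)$, and you make explicit the (correct) check that $pq$ divides $r$, which the paper leaves implicit.
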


\begin{proof}
Everything follows from Theorems \ref{thm1} and  \ref{thm2} and Proposition \ref{q-prop}. If $p$ and $q$ are both odd then $G(r,p,q,2) \cong G(r,pq,1,2)$ so the group has a GIM since $pq$ is odd. 
If $p$ is even and $q$ is odd then $G(r,p,q,2) \cong G(r,pq,1,2)$, so the group has a GIM if and only if $\frac{r}{pq}$ is odd.
Likewise, if $p$ is odd and $q$ is even then $G(r,p,q,2)\cong G(r,1,pq,2)$, so the group has a GIM if and only if $\frac{r}{pq}$ is odd or $(r,pq) = (4,2)$.
\end{proof}

It remains to consider the groups  $G(r,p,q,2)$ with $p$ and $q$  both even. We will find that these always have GIMs; to prove this it suffices by Theorem \ref{thm2} to consider the groups $G(r,2,q,2)$ where $q$ is  even. 
The nonlinear irreducible characters of $G(r,2,q,2)$ are the (not generally distinct) functions 
\[\chi^{x,y} : (\pi;a,b) \mapsto \begin{cases} \zeta_r^{ax+by} + \zeta_r^{ay+bx},&\text{if }\pi = 1 \in S_2, \\ 0,&\text{if }\pi \neq 1 \in S_2,\end{cases}\]
where $x,y$  range over all integers such that $x+y \equiv 0 \modu q)$ and $x \not \equiv y  \modu \tfrac{r}{2})$. Note that we always have $\chi^{x,y}=\chi^{y,x}$ and $\chi^{x,y} = \chi^{x+\frac{r}{2}, y +\frac{r}{2}}$.
The linear characters of $G(r,2,q,2)$ are alternatively the (not generally distinct) functions 
\[\lambda^{z,\epsilon} : (\pi;a,b) \mapsto \zeta_{r}^{(a+b)z} \cdot \sgn(\pi)^\epsilon
\]
and
\[
\nu^{w,\epsilon} : (\pi;a,b) \mapsto (-1)^a\cdot \zeta_{r}^{(a+b)w} \cdot \sgn(\pi)^\epsilon,\]
where  $z$ is a multiple of $\tfrac{q}{2}$ and $\epsilon \in \{0,1\}$ and $w$ is an integer with $(\zeta_{q/2})^w = (-1)^{r/q}$.
Note that $\nu^{z,\epsilon}$ is only a well-defined character if $\frac{r}{q}$ is even and $w$ is a multiple of $\frac{q}{2}$, or if $\frac{r}{q}$ is odd and $\frac{q}{2}$ is even and $w$ is an odd multiple of $\frac{q}{4}$.
Also, observe
that $\lambda^{z,\epsilon} = \lambda^{z+\frac{r}{2},\epsilon}$ and 
$\nu^{w,\epsilon} = \nu^{w+\frac{r}{2},\epsilon}$.

Continue to let  $\tau$ denote the automorphism of $G(r,2,q,2)$ given by $(\pi;a,b) \mapsto(\pi;-a,-b)$.  We now have the following sequence of lemmas.

\begin{lem} Assume $q$ is even. If $\frac{r}{2}$ is odd then $G(r,2,q,2)$ has a GIM.
\end{lem}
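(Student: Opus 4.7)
Since the only even positive divisor of $r$ with $r/2$ odd is $q=2$, and Theorem \ref{thm2} implies all groups $G(r,2,q,2)$ with $q$ even are pairwise isomorphic, we may assume $q=2$ throughout. Thus the goal is to produce a GIM for $G\eqdef G(r,2,2,2)$ when $r\equiv 2\pmod 4$. Since $\gcd(p,n)=2$, equality holds in Theorem \ref{f}, and by Lemma \ref{no-class-lem} the group $G$ has no class-preserving outer automorphisms; so Lemma \ref{5.1sub} reduces the task to constructing a GIM with respect to $\tau$.

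The first step is to classify the $\tau$-twisted conjugacy classes in $\cI_{G,\tau}$. A direct computation with the product formula shows that twisted conjugation by $g=(\pi;u,v)$ sends $(1;a,b)$ to $(1;a+2u,b+2v)$ if $\pi=1$ and to $(1;b+2v,a+2u)$ if $\pi=\sigma$, and sends $(\sigma;a,a)$ to $(\sigma;a+u+v,a+u+v)$ in either case. Using that $r/2$ is odd, the subgroup $\{(2u,2v):u+v\text{ even}\}$ equals $2\ZZ_r\times 2\ZZ_r$ and does not contain the scalar class $(r/2,r/2)$; a counting argument then shows $\cI_{G,\tau}$ consists of exactly two twisted conjugacy classes, represented by $(1;0,0)$ and $(\sigma;0,0)$, with stabilizers
\[ C_1=\{(1;0,0),(\sigma;0,0)\}\cong\ZZ_2 \quad\text{and}\quad C_2=\langle(1;1,-1),(\sigma;0,0)\rangle\cong D_{r/2} \]
respectively.

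The next step is to take $\lambda_1=\One$ on $C_1$ and $\lambda_2=\sgn$ on $C_2$ and verify via Frobenius reciprocity that $\Ind_{C_1}^G(\lambda_1)+\Ind_{C_2}^G(\lambda_2)=\sum_{\psi\in\Irr(G)}\psi$. Using the formulas for $\lambda^{z,\epsilon}$ and $\chi^{x,y}$ preceding the lemma, one computes that $\lambda^{z,\epsilon}|_{C_1}=\sgn^{\epsilon}$, that $\chi^{x,y}|_{C_1}$ is the regular representation $\One+\sgn$ of $\ZZ_2$, that $\lambda^{z,\epsilon}|_{C_2}=\sgn^{\epsilon}$ as a character of $D_{r/2}$, and that $\chi^{x,y}|_{C_2}$ is the two-dimensional representation of $D_{r/2}$ with character $\zeta_{r/2}^{jk}+\zeta_{r/2}^{-jk}$ on the $j$-th rotation (with $k\equiv(x-y)/2\pmod{r/2}$) and $0$ on reflections; irreducibility here holds because $x\not\equiv y\pmod{r/2}$ and $r/2$ odd together force $k\not\equiv 0\pmod{r/2}$. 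Frobenius reciprocity then yields $\Ind_{C_1}^G(\One)=\sum_z\lambda^{z,0}+\sum_{x,y}\chi^{x,y}$ and $\Ind_{C_2}^G(\sgn)=\sum_z\lambda^{z,1}$, whose sum is indeed $\sum_{\psi\in\Irr(G)}\psi$. The main obstacle is the bookkeeping in the orbit analysis, where the parity constraint $a+b\equiv 0\pmod 2$ interacts with the scalar identification $(a,b)\sim(a+r/2,b+r/2)$; both disentangle cleanly because $r/2$ is odd.
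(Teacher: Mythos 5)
Your opening reduction is a genuine gap. It is not true that $q=2$ is the only even divisor of $r$ when $r/2$ is odd: for example $r=6$ has the even divisor $q=6$, and in general $q=2m$ for any odd divisor $m$ of $r/2$ is allowed by the lemma. Moreover Theorem \ref{thm2} cannot be invoked to identify $G(r,2,q,2)$ with $G(r,2,2,2)$ for $q\neq 2$: that theorem compares $G(r,p,q,n)$ and $G(r,p',q',n)$ only under its standing hypothesis $pq=p'q'$, and no such isomorphism could exist anyway because $|G(r,2,q,2)|=r^2/q$ depends on $q$. So your argument proves the statement only in the single case $q=2$ (i.e.\ $r\equiv 2\pmod 4$) and says nothing about groups such as $G(6,2,6,2)$ or $G(18,2,6,2)$, which the lemma also covers.

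Within the case $q=2$ your computation is essentially sound and runs parallel to the paper's proof: the paper likewise finds exactly two $\tau$-twisted classes, represented by $(1;0,0)$ and $(\sigma;0,0)$, with twisted centralizers $A=\langle(\sigma;0,0)\rangle\cong S_2$ and $B=G(r,\tfrac{2r}{q},q,2)$ (which for $q=2$ is your dihedral group of order $r$), and concludes by Frobenius reciprocity together with the degree count supplied by Theorem \ref{f}. Your choice of characters is the ``swap'' of the paper's $\{\sgn_A,\One_B\}$ (trivial on the small subgroup, sign on the large one), and both choices do give GIMs in that case. But to prove the lemma you must carry out this analysis for arbitrary even $q$ with $r/q$ odd, where the large twisted centralizer is $G(r,\tfrac{2r}{q},q,2)$ (not a dihedral group in general) and the linear characters $\lambda^{z,\epsilon}$ are indexed by multiples $z$ of $q/2$; as written, the proposal does not address this general situation, so the proof is incomplete.
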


\begin{proof}
Write $G=G(r,2,q,2)$. In the situation of the lemma, $\frac{r}{q}$ is odd so  $\Irr(G) = \{ \chi^{x,y} \} \cup \{ \lambda^{z,\epsilon}\}$ with $x,y,z,\epsilon$ as above. The following facts hold by routine arguments: there are only two $\tau$-twisted conjugacy classes in $G$; these are represented by the elements $(1;0,0)$ and $(\sigma;0,0)$; and the corresponding $\tau$-twisted centralizers are the subgroups $A \eqdef \langle (\sigma;0,0)\rangle \cong S_2$ and $B \eqdef 
G(r,\tfrac{2r}{q},q,2)$.

 Write $\sgn_A$ and $\One_B$ for the nonprincipal and principal irreducible characters of $A$ and $B$, respectively.  Frobenius reciprocity implies that 
$  \Ind_A^G(\sgn_A) = \sum_{z} \lambda^{z,1} + \sum_{x,y} \chi^{x,y},$ where the (multiplicity-free) sums range over all allowable values of $x$, $y$, and $z$.
Each linear character $\lambda^{z,\epsilon}$ (with $z$ a multiple of $\frac{q}{2}$) restricts to the linear character $(\pi;a,b) \mapsto \sgn(\pi)^\epsilon$ of $B$. It follows that $\lambda^{z,\epsilon}$ is a constituent of $\Ind_B^G(\One_B)$ if and only if $\epsilon = 0$, whence 
every irreducible character of $G$ is a constituent of $\Ind_A^G(\sgn_A) + \Ind_B^G(\One_B)$.  Since this sum of induced characters has the same degree as $\sum_{\psi \in \Irr(G)} \psi$ by Theorem \ref{f}, we conclude that $\{ \sgn_A, \One_B\}$ is a GIM for $G$.
\end{proof}

\begin{lem} 
Assume $q$ is even. If $\frac{r}{2}$ is even but $\frac{r}{q}$ is odd  then $G(r,2,q,2)$ has a  GIM.
\end{lem}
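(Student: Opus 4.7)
By Lemma~\ref{no-class-lem}, it suffices to exhibit a GIM for $G=G(r,2,q,2)$ with respect to $\tau$. My first step is to enumerate the $\tau$-twisted conjugacy classes in $\cI_{G,\tau}$. Since $r/2$ is even, the type (iii) generalized involutions $(\sigma;a,a+r/2)$ now genuinely lie in $G$ (their $\Delta$-value $r/2$ is divisible by $p=2$), unlike in the preceding lemma where $r/2$ odd excluded them. A direct orbit computation along the lines used earlier in the section shows that $\cI_{G,\tau}$ decomposes into exactly three classes, represented by $\omega_0=(1;0,0)$, $\omega_1=(\sigma;0,0)$, and $\omega_2=(\sigma;0,r/2)$; the classes $\omega_1$ and $\omega_2$ are distinct in $G$ because the only candidate $C_q$-identification between them would force $r/2\equiv 0\pmod r$.

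Next I compute the twisted centralizers $H_i\eqdef C_{G,\tau}(\omega_i)$ using $\tau(\pi;a,b)=(\pi;-a,-b)$ and the product rule stated at the beginning of the section. The two $\sigma$-type centralizers coincide and equal $B\eqdef G(r,2r/q,q,2)$, a group of order $r$: in both cases the fixedness condition reduces to $a+b\equiv 0\pmod{r/q}$, which together with the defining relation $a+b\equiv 0\pmod 2$ of $G$ forces $a+b\equiv 0\pmod{2r/q}$ since $r/q$ is odd. The centralizer $H_0$ of $\omega_0$ is the subgroup of $\tau$-fixed elements of $G$, which admits a similarly explicit description. The plan is then to select linear characters $\lambda_0\in\Irr(H_0)$ and $\lambda_1,\lambda_2\in\Irr(B)$ so that $\sum_{i=0}^{2}\Ind_{H_i}^G(\lambda_i)=\sum_{\psi\in\Irr(G)}\psi$. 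The degree identity $\sum_i[G:H_i]=|\cI_{G,\tau}|=\sum_\psi\psi(1)$ provided by Theorem~\ref{f} together with orbit-stabilizer reduces the task to verifying that the induced sum is multiplicity-free, which by Frobenius reciprocity amounts to computing the restrictions of each family $\lambda^{z,\epsilon}$, $\nu^{w,\epsilon}$, and $\chi^{x,y}$ to $B$ and to $H_0$.

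The main obstacle is combinatorial bookkeeping rather than any new conceptual ingredient. Note that under the present hypotheses we automatically have $4\mid q$ (since $r/2=(q/2)(r/q)$ even with $r/q$ odd forces $q/2$ even), so the family $\nu^{w,\epsilon}$ of linear characters really does exist and must be assigned to one of the three induced pieces. The delicate point is that $H_1=H_2=B$, so the pair $(\lambda_1,\lambda_2)$ on the common subgroup $B$ has to be chosen to partition the linear characters of $G$ between the two $\sigma$-type contributions in a way compatible with the values of the pairing under restriction. I expect to take $\lambda_1=\One_B$ and $\lambda_2$ to be the nontrivial linear character of $B$ that distinguishes $\omega_1$ from $\omega_2$, and $\lambda_0$ to be a sign-type character on $H_0$ which picks up precisely the two-dimensional $\chi^{x,y}$'s; the verification of multiplicity-freeness then proceeds exactly as in the preceding lemma but with one extra induced summand tracking the case (iii) class.
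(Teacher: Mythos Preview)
Your orbit count is wrong: there are four $\tau$-twisted conjugacy classes in $\cI_{G,\tau}$, not three. Passing from $r/2$ odd to $r/2$ even does more than create the new antisymmetric class $(\sigma;0,r/2)$; it also splits the type-(i) class in two. Concretely, for $g=(1;c,d)\in G$ one computes $g\cdot(1;a,b)\cdot\tau(g)^{-1}=(1;a+2c,b+2d)$, and the $\sigma$-action merely swaps coordinates in the same form. Since $c+d$ must be even in $G(r,2,q,2)$, the possible shifts $(2c,2d)$ satisfy $2c+2d\equiv 0\pmod 4$. When $4\mid r$ this is a genuine constraint, so the orbit of $(1;0,0)$ consists of pairs $(1;u,v)$ with $u,v$ even and $u+v\equiv 0\pmod 4$ (up to $C_q$), while $(1;0,2)$ lies in a separate orbit. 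The $C_q$-identification cannot merge them: any shift by $kr/q$ with $r/q$ odd changes the parity of each coordinate, and one checks no choice of $k$ lands $(1;0,2)$ in the first orbit. The paper's proof accordingly uses four representatives $(1;0,0)$, $(1;0,2)$, $(\sigma;0,0)$, $(\sigma;0,r/2)$, with centralizers $A$, $B$, $C$, $C$ where $A,B\cong S_2\times S_2$ are distinct order-$4$ subgroups and $C=G(r,2r/q,q,2)$.

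This miscount propagates. A GIM must by definition include one linear character for \emph{each} twisted class, so a three-term model is disqualified from the start; and even setting that aside, your degree bookkeeping via Theorem~\ref{f} would not balance. Your plan to absorb all of the $\chi^{x,y}$ into the single $H_0$-piece cannot work either: in the actual proof the $\chi^{x,y}$ split between $\Ind_A^G(\alpha)$ and $\Ind_B^G(\beta)$ according to the parity of $x$, and the two $C$-pieces carry the linear characters $\lambda^{z,1}$ and $\nu^{w,1}$. You have correctly identified $C=G(r,2r/q,q,2)$ and correctly observed that $4\mid q$ forces the $\nu^{w,\epsilon}$ family to appear, but you need the fourth class and its centralizer $B=\langle(\sigma;1,-1),(1;0,r/2)\rangle$ to make the construction go through.
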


\begin{proof}
Again let $G=G(r,2,q,2)$ and note that $\frac{q}{2}$ is even and that  $\Irr(G) = \{ \chi^{x,y} \} \cup \{\lambda^{z,\epsilon}\} \cup \{ \nu^{w,\epsilon}\}$, with the parameters $x,y,z,w,\epsilon$ subject to the conditions  above.  In particular, $w$ must be an odd multiple of $\frac{q}{4}$.
There are four $\tau$-twisted conjugacy classes in $G$, represented by the elements $(1;0,0)$, $(1;0,2)$, $(\sigma;0,0)$, $(\sigma;0,\tfrac{r}{2})$, with corresponding $\tau$-twisted centralizers  $A$, $B$, $C$,  $C$, where 
\[ A \eqdef \Bigl\langle (\sigma;0,0),\ (1;0,\tfrac{r}{2}) \Bigr\rangle \cong S_2\times S_2
\]
and
\[
B \eqdef \Bigl\langle (\sigma;1,-1),\ (1;0,\tfrac{r}{2}) \Bigr\rangle \cong S_2\times S_2
\] and $C \eqdef G(r,\tfrac{2r}{q},q,2)$.  Choose linear characters of these subgroups as follows:
\begin{enumerate}
\item[$\bullet$] Let $\alpha=\One_A$ be the principal character of $A$.
\item[$\bullet$] Let $\beta$ be the linear character of $B$ with 
\[\beta(\sigma;1,-1) = \beta(1;0,\tfrac{r}{2}) = -1.\]
\item[$\bullet$] Let $\gamma$ be the common restriction of $\lambda^{z,1}$ (for  multiples $z$ of $\frac{q}{2}$) to $C$.
\item[$\bullet$] Let $\gamma'$ be the common restriction of $\nu^{w,1}$ (for  odd multiples $w$ of $\frac{q}{4}$) to $C$.
\end{enumerate}
\normalsize
Using Frobenius reciprocity with the explicit character formulas provided above, one can check that $\chi^{x,y}$ is a constituent of $\Ind_A^G(\alpha)$ for all $x,y$ with $x$ (hence also $y$)  even and of $\Ind_B^G(\beta)$ for all $x,y$ with $x$  odd;
that
$\lambda^{z,0}$ and $\lambda^{z,1}$ are  constituents for all $z$ of $\Ind_A^G(\alpha)$ and $\Ind_C^G(\gamma)$, respectively;  that $\nu^{w,0}$ is a constituent for all $w$ of exactly one of $\Ind_A^G(\alpha)$ or $\Ind_B^G(\beta)$ depending on the parity of $\frac{q}{4}$; and that
$\nu^{w,1}$ is a constituent of $\Ind_C^G(\gamma')$ for all $w$. As in the previous lemma, these observations suffice to show that $\{\alpha,\beta,\gamma,\gamma'\}$ is a GIM for $G$ since $\Ind_A^G(\alpha) + \Ind_B^G(\beta) + \Ind_C^G(\gamma) + \Ind_C^G(\gamma')$ has the same degree as $\sum_{\psi \in \Irr(G)} \psi$ by Theorem \ref{f}.
 \end{proof}

\begin{lem} 
Assume $q$ is even. If $\frac{r}{q}$ is even then $G(r,2,q,2)$ has a  GIM.
\end{lem}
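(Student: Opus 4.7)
The plan is to follow the blueprint of the two preceding lemmas: enumerate the $\tau$-twisted conjugacy class representatives of $\cI_{G,\tau}$ in $G = G(r,2,q,2)$, compute their $\tau$-twisted centralizers, assign each a carefully chosen linear character, and then verify via Frobenius reciprocity that every irreducible character of $G$ appears as a constituent of some induced character. Since Theorem~\ref{f} guarantees that the total degree of the induced characters equals $\sum_{\psi \in \Irr(G)} \psi(1)$, exhibiting every irreducible as a constituent automatically forces a multiplicity-free decomposition and hence yields a GIM.

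With both $q$ and $r/q$ even, we have $4 \mid r$ and the parameters $z$, $w$ indexing $\lambda^{z,\epsilon}$ and $\nu^{w,\epsilon}$ both range over multiples of $q/2$ modulo $r/2$, so $\Irr(G)$ contains four distinct families of linear characters in addition to the degree $2$ family $\{\chi^{x,y}\}$ which is visibly split by the parity of $x$. My first step is to enumerate the $\tau$-twisted classes of $\cI_{G,\tau}$, which consists of all elements $(1;a,b)$ together with $\sigma$-elements of the form $(\sigma;a,a)$ and $(\sigma;a,a+r/2)$; the basic representatives to take on the $\sigma$-side are $(\sigma;0,0)$, $(\sigma;1,1)$, $(\sigma;0,r/2)$, $(\sigma;1,1+r/2)$, and on the identity-side $(1;0,0)$ and $(1;0,2)$, with possibly further subdivisions depending on finer congruence conditions modulo $r/q$. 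The $\sigma$-type centralizers should all project onto a common copy of $C \eqdef G(r,2r/q,q,2)$, while the identity-type centralizers should be $\tau$-fixed subgroups built from $\langle \sigma, (1;0,r/2) \rangle$ together with suitable diagonal contributions.

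On each identity-type centralizer I would pick the principal character or a sign character detecting $(1;0,r/2)$, and on each $\sigma$-type centralizer I would take the common restriction of an appropriate $\lambda^{z,\epsilon}$ or $\nu^{w,\epsilon}$. Using the explicit formulas from the preamble to this section, Frobenius reciprocity then splits $\chi^{x,y}$ between the two identity-type contributions according to the parity of $x$, and splits the four families of linear characters among the $\sigma$-type contributions according to the residues of $z$, $w$ modulo $q/2$ and the sign $\epsilon$.

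The main obstacle I anticipate is purely bookkeeping: in contrast to the two preceding lemmas, the twisted conjugation action now has nontrivial orbits whose sizes depend on $r$ and $q$, so verifying that the proposed representatives are pairwise nonconjugate, that the twisted centralizers have the claimed descriptions, and that each irreducible character is captured by exactly one induced character requires a careful case analysis parameterized by $r/q$ and $q/2$. Once this bookkeeping is carried out, the degree equality supplied by Theorem~\ref{f} forces multiplicity-freeness and completes the proof.
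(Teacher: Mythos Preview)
Your plan matches the paper's approach exactly, but several of the concrete guesses you make are wrong and would not survive the bookkeeping you anticipate.

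\begin{enumerate}
\item The $\sigma$-type twisted centralizer is $C = G(r,\tfrac{r}{q},q,2)$, not $G(r,\tfrac{2r}{q},q,2)$. The formula $G(r,\tfrac{2r}{q},q,2)$ was correct in the two preceding lemmas because there $r/q$ was odd; once $r/q$ is even the constraint $2\mid a+b$ is already implied by $\tfrac{r}{q}\mid a+b$, so the centralizer enlarges.
\item There are eight $\tau$-twisted classes, four on each side. On the identity side the paper uses $(1;0,0)$, $(1;1,1)$, $(1;0,2)$, $(1;1,3)$, with centralizers $A,A,B,B$ each isomorphic to $(S_2)^3$, not $(S_2)^2$: the new generator $(1;\tfrac{r}{2q},\tfrac{r}{2q})$ appears exactly because $r/q$ is even. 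Your ``possibly further subdivisions'' are not optional.
\item Consequently $\chi^{x,y}$ is distributed among four identity-type inductions, and the separating invariant is the pair of parities of $x$ and of $(x+y)/q$, not the parity of $x$ alone.
\item It is not true that all linear characters go to $\sigma$-type inductions. In the paper the $\epsilon=0$ linear characters (e.g.\ $\lambda^{z,0}$, $\nu^{w,0}$) are constituents of identity-type inductions $\Ind_A^G(\alpha)$, $\Ind_A^G(\alpha')$, $\Ind_B^G(\beta)$, $\Ind_B^G(\beta')$; only the $\epsilon=1$ characters are handled by the four $\sigma$-type inductions from $C$.
\end{enumerate}

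All of this is fixable within your framework, but be prepared for the assignment of the $\nu^{w,\epsilon}$ to branch further on the parities of $q/2$ and $r/(2q)$.
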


\begin{proof}
Again let $G=G(r,2,q,2)$ and note that $\frac{r}{2}$ is even and that  $\Irr(G) = \{ \chi^{x,y} \} \cup \{\lambda^{z,\epsilon}\} \cup \{ \nu^{w,\epsilon}\}$, where now both $z$ and $w$ must be integer multiples of $\frac{q}{2}$.
There are eight $\tau$-twisted conjugacy classes in $G$, represented by the elements $(1;0,0)$, $(1;1,1)$, $(1;0,2)$, $(1;1,3)$, $(\sigma;0,0)$, $(\sigma;\frac{r}{2q}, \frac{r}{2q})$, $(\sigma;0,\frac{r}{2})$, $(\sigma;\frac{r}{2q}, \frac{r}{2q}+\frac{r}{2})$, with corresponding $\tau$-twisted centralizers  $A$, $A$, $B$, $B$, $C$,  $C$, $C$, $C$, where 
\[A \eqdef \Bigl\langle (\sigma;0,0),\ (1;0,\tfrac{r}{2}),\ (1;\tfrac{r}{2q}, \tfrac{r}{2q}) \Bigr\rangle \cong (S_2)^3
\]
and
\[
B \eqdef \Bigl\langle (\sigma;1,-1),\ (1;0,\tfrac{r}{2}),\ (1;\tfrac{r}{2q}, \tfrac{r}{2q}) \Bigr\rangle \cong (S_2)^3
\] and $C \eqdef G(r,\tfrac{r}{q},q,2)$. Choose linear characters of these subgroups as follows:
\begin{enumerate}
\item[$\bullet$] Let $\alpha=\One_A$ be the principal character of $A$.
\item[$\bullet$] Let $\alpha'$ be the linear character of $A$ with 
\[\alpha'(\sigma;0,0) = \alpha'(1;0,\tfrac{r}{2}) = 1
\qquad\text{and}\qquad \alpha'(1;\tfrac{r}{2q},\tfrac{r}{2q}) = -1.\]
\item[$\bullet$] Let $\beta$ be the linear character of $B$ with 
\[ \beta(\sigma;1,-1) = \beta(1;0,\tfrac{r}{2}) = -1\qquad\text{and}\qquad \beta(1;\tfrac{r}{2q},\tfrac{r}{2q}) =1.\]
\item[$\bullet$] Let $\beta'$ be the linear character of $B$ with 
\[\beta(\sigma;1,-1) = \beta(1;0,\tfrac{r}{2}) =\beta(1;\tfrac{r}{2q},\tfrac{r}{2q}) =-1.\]
\item[$\bullet$] Let $\gamma$ be the common restriction of $\lambda^{z,1}$ for $z \in \{kq : k \in \ZZ\}$ to $C$.
\item[$\bullet$] Let $\gamma'$ be the common  restriction of $\lambda^{z+\frac{q}{2},\epsilon}$ for $z \in \{kq : k \in \ZZ\}$ to $C$, where $\epsilon \equiv \frac{q}{2} \modu 2)$.
\item[$\bullet$] Let $\gamma''$ be the common restriction of $\nu^{w,1}$ for $w \in \{ kq : k \in \ZZ\}$ to $C$.
\item[$\bullet$] Let $\gamma'''$ be the common restriction of $\nu^{w+\frac{q}{2},1}$ for $w \in \{ kq : k \in \ZZ\}$ to $C$.

\end{enumerate}
\normalsize
The following facts are routine consequences of Frobenius reciprocity. The irreducible character
$\chi^{x,y}$ is  a constituent of $\Ind_A^G(\alpha)$, $\Ind_A^G(\alpha')$, $\Ind_B^G(\beta)$, or $\Ind_B^G(\beta')$, respectively, if the parities of $(x, \frac{x+y}{q})$ are 
(even, even), (even, odd), (odd, even), or (odd, odd).
For all $z \in \{kq : k \in \ZZ\}$, the irreducible characters 
$\lambda^{z,0}$, $\lambda^{z+\frac{q}{2},1}$, $\lambda^{z,1}$, $\lambda^{z+\frac{q}{2},0}$ are respectively constituents of 
\[\begin{cases} \Ind_A^G(\alpha),\quad \Ind_B^G(\beta'), \quad \Ind_C^G(\gamma), \quad \Ind_C^G(\gamma'), &\text{if $\frac{q}{2}$ is odd},\\
\Ind_A^G(\alpha), \quad\Ind_C^G(\gamma'), \quad\Ind_C^G(\gamma), \quad\Ind_A^G(\alpha'),& \text{if $\frac{q}{2}$ is even.}\end{cases}\] Likewise, for all $z \in \{kq : k \in \ZZ\}$, the characters $\nu^{z,0}$, $\nu^{z+\frac{q}{2},1}$, $\nu^{z,1}$, $\nu^{z+\frac{q}{2},0}$ are respectively constituents of 
\[\begin{cases}
 \Ind_A^G(\alpha),\quad\ \Ind_C^G(\gamma'''), \quad \Ind_C^G(\gamma''), \quad \Ind_B^G(\beta'), &\text{if $\frac{q}{2}$ is odd and $\frac{r}{2q}$ is even},\\
\Ind_A^G(\alpha'),\quad \Ind_C^G(\gamma'''), \quad \Ind_C^G(\gamma''), \quad \Ind_B^G(\beta), &\text{if $\frac{q}{2}$ is odd and $\frac{r}{2q}$ is odd},\\
 \Ind_A^G(\alpha), \quad\ \Ind_C^G(\gamma'''), \quad\Ind_C^G(\gamma''), \quad\Ind_A^G(\alpha'),& \text{if $\frac{q}{2}$ is even and $\frac{r}{2q}$ is even,}\\
  \Ind_A^G(\alpha'), \quad\Ind_C^G(\gamma'''), \quad\Ind_C^G(\gamma''), \quad \Ind_A^G(\alpha),& \text{if $\frac{q}{2}$ is even and $\frac{r}{2q}$ is odd.}
 \end{cases}\] Exactly as in the previous lemmas, this suffices  to show that  $\{\alpha,\alpha',\beta,\beta',\gamma,\gamma',\gamma'',\gamma'''\}$ is a GIM for $G$ by dimensional considerations.
\end{proof}

Combining these lemmas, we have the following theorem, which is equivalent to Theorem \ref{n=2-thm} in the introduction by Theorem \ref{thm2}.

\begin{thm}\label{n=2-cl}
$G(r,p,q,2)$ has a GIM if and only if one of the following  mutually exclusive conditions occurs:
\begin{enumerate}
\item[(i)] $p \equiv q \modu 2)$.
\item[(ii)] $p \not \equiv q \modu 2)$ and $\frac{r}{pq}$ is an odd integer.
\item[(iii)] $(r,p,q) = (4,1,2)$.
\end{enumerate}
\end{thm}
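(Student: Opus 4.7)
My plan is to deduce the theorem by a short case analysis on the parities of $p$ and $q$, combining the lemmas proved earlier in this section with the isomorphism criterion Theorem \ref{thm2}. All the serious work has been done; what remains is to match up the cases.

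First, suppose $p$ and $q$ are not both even. Then Lemma \ref{n2-lem} applies directly and says that $G(r,p,q,2)$ has a GIM precisely when (a) $p$ and $q$ are both odd, (b) $\frac{r}{pq}$ is odd, or (c) $(r,p,q)=(4,1,2)$. I would verify that these three subcases correspond exactly to the disjunction of (i), (ii), (iii) in the theorem statement, restricted to the hypothesis that $p$ and $q$ are not both even: (a) is the ``both odd'' half of condition (i), (b) is condition (ii) (since $p,q$ have different parity here), and (c) is condition (iii). In particular, when $p$ is odd and $q$ is even, the residual case $(r,pq)=(4,2)$ from Lemma \ref{n2-lem} forces $p=1$ and $q=2$, so it collapses exactly to (iii).

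Second, suppose $p$ and $q$ are both even, so condition (i) holds and I want to show $G(r,p,q,2)$ always has a GIM. Since $pq \mid 2r$, the integer $\frac{pq}{2}$ is a positive divisor of $r$, and moreover it is even because $p$ and $q$ are both even. Thus $\gcd(p,2)=\gcd(2,2)=2$ and $\gcd(q,2)=\gcd(\tfrac{pq}{2},2)=2$, so Theorem \ref{thm2} yields an isomorphism
\[G(r,p,q,2)\ \cong\ G\!\left(r,2,\tfrac{pq}{2},2\right).\]
This is a group of the form $G(r,2,q',2)$ with $q'=\frac{pq}{2}$ even, and the three lemmas preceding the theorem show that every such group admits a GIM (split into the subcases $\frac{r}{2}$ odd, $\frac{r}{2}$ even with $\frac{r}{q'}$ odd, and $\frac{r}{q'}$ even). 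Hence $G(r,p,q,2)$ has a GIM whenever $p\equiv q\equiv 0\pmod 2$.

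Finally, I would check that (i), (ii), (iii) are mutually exclusive: (ii) requires $p\not\equiv q\pmod 2$, which rules out (i); and (iii) gives $(r,p,q)=(4,1,2)$, for which $p\not\equiv q\pmod 2$ (ruling out (i)) and $\frac{r}{pq}=2$ is even (ruling out (ii)). The only step that requires any thought is verifying the reduction in the ``both even'' paragraph — namely that $\frac{pq}{2}$ is a legitimate parameter so Theorem \ref{thm2} applies — but this follows immediately from the divisibility hypotheses $p,q \mid r$ and $pq \mid 2r$. There is no real obstacle; the proof is essentially bookkeeping once the three $G(r,2,q,2)$-lemmas and Lemma \ref{n2-lem} are in hand.
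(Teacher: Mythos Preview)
Your proposal is correct and follows essentially the same route as the paper's proof: reduce the ``both even'' case via Theorem \ref{thm2} to $G(r,2,\tfrac{pq}{2},2)$ with $\tfrac{pq}{2}$ even and invoke the three preceding lemmas, and handle the remaining cases by Lemma \ref{n2-lem}. Your write-up is somewhat more explicit (verifying well-definedness of the parameters and mutual exclusivity), but the argument is the same.
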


\begin{proof}
The preceding three lemmas show that $G(r,2,q,2)$ has a GIM if $q$ is even.  This implies that $G(r,p,q,2)$ has a GIM whenever  $p,q$ are both even because in this case $G(r,p,q,2)\cong G(r,2,\frac{pq}{2},2)$ by Theorem \ref{thm2}.  The theorem now follows from this observation and Lemma \ref{n2-lem}. 
\end{proof}

\def\GL{\mathrm{GL}}

\section{Generalized involution models for quotients groups}

Let $G$ be a finite group with a normal subgroup $N$, and suppose $\nu\in \Aut(G)$ is an automorphism such that $\nu^2=1$ and $\nu(N)=N$. Then $\nu$ defines  also an automorphism of the quotient group $G/N$, that we still call $\nu$. If $G$ has a GIM with respect to $\nu$, then it is not always true that $G/N$ also has a GIM with respect to $\nu$. Here, however, we give one sufficient condition for this to occur.

Following \cite{APR}, we use the term
\emph{Gelfand model} to refer to a representation of a group equivalent to the multiplicity-free sum of all of the group's irreducible representations.
Recall that the irreducible representations of the quotient $G/N$ are given exactly by the irreducible representations of $G$ whose kernel contains $N$. Therefore, if $\rho : G \to \GL(V)$ is a \emph{Gelfand model} for $G$ and $V^{N}=\{v\in V : \rho(n)(v)=v \text{ for all } n\in N\}$, then the obvious representation $\rho : G/N \to \GL(V^N)$ is a Gelfand model for $G/N$.

We exploit this observation in the following proposition. To state this result, let
\[ \mathcal V_{G,\nu}\eqdef \mathbb C\spanning\{C_\omega:\,\omega 	\in \cI_{G,\nu}\}\] be a complex vector space with a basis indexed by the generalized involutions $\cI_{G,\nu}$, and 
suppose $G$ has a GIM with respect to $\nu$. By \cite[Lemma 2.1]{Ma}, this is equivalent to the existence of a function $\phi : G \times \cI_{G,\nu} \to \CC^*$
such that the formula 
\[\rho(g)(C_\omega)\eqdef \phi(g,\omega)C_{g\cdot \omega \cdot \nu(g)^{-1}}\quad\text{for $g \in G$ and $\omega \in \cI_{G,\nu}$}\]
extends to a representation $\rho : G \to \mathrm{GL}(\cV_{G,\nu})$ which is a Gelfand model.
With respect to this notation, we have the following statement.

\begin{prop}\label{quotient-prop} Suppose $G$ has a GIM with respect to $\nu$, so that there exists a function $\phi : G \times \cI_{G,\nu} \to \CC^*$ defining a Gelfand model as above.  Assume that both of the following conditions hold:
\begin{enumerate}
\item[(i)] If $\omega \in \cI_{G,\nu}$ and $g \in C_{G,\nu}(\omega)\cap N$, then $\phi(g,\omega) = 1$.
\item[(ii)] If $\omega N\in \cI_{G/N,\nu}$ for some $\omega\in G$, then $\omega\in \cI_{G,\nu}$.
\end{enumerate}
Then the quotient group $G/N$ has a GIM respect to $\nu$.
\end{prop}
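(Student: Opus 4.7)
The plan is to exploit the observation recalled immediately before the proposition: since $\phi$ realizes a Gelfand model $\rho : G \to \GL(\cV_{G,\nu})$, the $N$-invariants $\cV_{G,\nu}^N$ automatically form a Gelfand model for $G/N$. To conclude that $G/N$ has a GIM with respect to $\nu$, by \cite[Lemma~2.1]{Ma} it suffices to produce a basis of $\cV_{G,\nu}^N$ indexed by $\cI_{G/N,\nu}$ on which $G/N$ acts in the form $C_{\bar\omega} \mapsto \bar\phi(\bar g, \bar\omega)\,C_{\bar g\cdot\bar\omega}$ for some $\bar\phi : G/N \times \cI_{G/N,\nu} \to \CC^*$.

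I would begin by decomposing $\cV_{G,\nu} = \bigoplus_{\mathcal O} V_{\mathcal O}$ over the $N$-twisted conjugacy classes $\mathcal O \subseteq \cI_{G,\nu}$, where $V_{\mathcal O} = \CC\spanning\{C_\omega : \omega \in \mathcal O\}$ is $N$-stable. The cocycle identity $\phi(g_1g_2, \omega) = \phi(g_1, g_2\cdot\omega)\phi(g_2,\omega)$ forced by $\rho$ being a homomorphism, together with condition~(i), identifies $V_{\mathcal O}$ with $\Ind_{C_{G,\nu}(\omega_0)\cap N}^{N}\One$ as an $N$-representation for any fixed $\omega_0 \in \mathcal O$. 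Frobenius reciprocity then gives $\dim V_{\mathcal O}^N = 1$, spanned by an explicit vector $v_{\mathcal O} = \sum_{\omega\in\mathcal O} \phi(n(\omega),\omega_0)\,C_\omega$ where $n(\omega)\in N$ satisfies $n(\omega)\cdot\omega_0 = \omega$. In particular $\dim \cV_{G,\nu}^N$ equals the number $|\cI_{G,\nu}/N|$ of $N$-orbits in $\cI_{G,\nu}$.

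The crux is the natural map $\bar q : \cI_{G,\nu}/N \to \cI_{G/N,\nu}$ sending the $N$-orbit of $\omega$ to $\omega N$. This map is well defined (twisted conjugation by $n \in N$ preserves the coset $\omega N$) and surjective precisely by condition~(ii). To show injectivity I would invoke the twisted Frobenius--Schur identity $|\cI_{H,\nu}| = \sum_{\psi\in\Irr(H)} \varepsilon_\nu(\psi)\,\psi(1)$ with $\varepsilon_\nu(\psi)\in\{-1,0,+1\}$. Existence of a GIM for $G$ gives $|\cI_{G,\nu}| = \sum_{\psi\in\Irr(G)} \psi(1)$, which forces $\varepsilon_\nu(\psi)=+1$ for every $\psi\in\Irr(G)$; a short computation with the averaging formula shows that $\varepsilon_\nu$ is preserved under inflation, so in fact $\varepsilon_\nu(\psi)=+1$ for each $\psi\in\Irr(G/N)\subseteq\Irr(G)$. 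Hence $|\cI_{G/N,\nu}| = \sum_{\psi\in\Irr(G/N)}\psi(1) = \dim \cV_{G,\nu}^N = |\cI_{G,\nu}/N|$, and together with surjectivity this forces $\bar q$ to be a bijection.

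Finally I would transport the basis $\{v_{\mathcal O}\}$ through $\bar q$ to a basis $\{C_{\bar\omega}\}_{\bar\omega\in\cI_{G/N,\nu}}$ of $\cV_{G/N,\nu}$. Because $\rho(g)$ sends $V_{\mathcal O}^N$ onto the one-dimensional space $V_{g\cdot\mathcal O}^N$, there is a scalar $\bar\phi(\bar g,\mathcal O)$ with $\rho(g)(v_{\mathcal O}) = \bar\phi(\bar g,\mathcal O)\,v_{g\cdot\mathcal O}$; and the identity $\rho(gn)(v_{\mathcal O}) = \rho(g)(v_{\mathcal O})$ for $n\in N$ (using $v_{\mathcal O}\in \cV_{G,\nu}^N$) shows this scalar depends only on $\bar g$. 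Under $\bar q$ the action $\mathcal O \mapsto g\cdot\mathcal O$ matches $\bar\omega \mapsto \bar g\cdot\bar\omega$, so the resulting $G/N$-representation on $\cV_{G/N,\nu}$ has the required form and is a Gelfand model, yielding the desired GIM via \cite[Lemma~2.1]{Ma}. The main obstacle is the injectivity of $\bar q$: a direct attempt to produce $n_0\in N$ with $n_0\omega\nu(n_0)^{-1}=\omega n$ from $\omega,\omega n\in \cI_{G,\nu}$ reduces to a noncommutative Hilbert--90 surjectivity statement for the involution $m\mapsto \omega^{-1}\nu(m)\omega$ of $N$, which can genuinely fail without both hypotheses; the Frobenius--Schur detour uses (i) and (ii) together---through the dimension count and the surjection $\bar q$---to bypass this cocycle obstruction cleanly.
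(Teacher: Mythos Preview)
Your approach and the paper's are essentially the same: both average over $N$ to produce, from condition~(i), a basis of $\cV_{G,\nu}^N$ indexed by the $N$-orbits $\cR$ in $\cI_{G,\nu}$ (the paper writes $C^N_\omega = \frac{1}{|N|}\sum_{n\in N}\rho(n)C_\omega$, which agrees up to scalar with your $v_{\mathcal O}$), then match $\cR$ with $\cI_{G/N,\nu}$ and verify the induced $G/N$-action has the form required by \cite[Lemma~2.1]{Ma}.

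The one real difference is how the bijection $\cR \leftrightarrow \cI_{G/N,\nu}$ is established. The paper simply asserts that, by condition~(ii), the assignment $\omega N \mapsto s(\omega)$ is a well-defined bijection. But condition~(ii) only gives surjectivity of $\cR \to \cI_{G/N,\nu}$, $\omega_0 \mapsto \omega_0 N$; it does \emph{not} directly show that two generalized involutions in the same $N$-coset must lie in the same twisted $N$-orbit (for instance, take $G=(\ZZ_2)^2$, $N=\ZZ_2\times 0$, $\nu=\mathrm{id}$: here (ii) holds and every $N$-orbit is a singleton, yet each coset contains two involutions---of course (i) fails in this example, which is the point). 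Your Frobenius--Schur detour handles this properly: since $G$ has a GIM one has $\varepsilon_\nu(\chi)=+1$ for all $\chi\in\Irr(G)$; the computation $\varepsilon_\nu^{G/N}(\psi)=\varepsilon_\nu^{G}(\tilde\psi)$ for the inflation $\tilde\psi$ then gives $|\cI_{G/N,\nu}|=\sum_{\psi\in\Irr(G/N)}\psi(1)=\dim\cV_{G,\nu}^N=|\cR|$, and together with the surjection this forces the bijection. This is a genuine (and necessary) addition to the paper's argument, and it is exactly here that condition~(i) re-enters, via the equality $|\cR|=\dim\cV_{G,\nu}^N$.
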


\begin{proof}
To prove the proposition we show that  $\mathcal V_{G,\nu}^N$ has a $\mathbb C$-basis $\{ C_{\bar \omega} : \bar \omega \in   \cI_{G/N,\nu}\}$ indexed by the generalized  involutions in $G/N$, on which the action of $G/N$ on $\mathcal V_{G,\nu}^N$ has the formula
\begin{equation}\label{quotient-claim}
\rho(\bar g)(C_{\bar \omega})=\psi(\bar g, \bar \omega)C_{\bar g \cdot \bar \omega \cdot \nu( \bar g )^{-1}},\quad\text{for a function $\psi : G/N \times \cI_{G/N,\nu} \to \CC^*$.}
\end{equation}
This suffices to show  that $G/N$ has a generalized involution model by  \cite[Lemma 2.1]{Ma}.
To this end, we define an operator $\Phi  \in \End(\cV_{G,\nu})$ by
\[
\Phi = \frac{1}{|N|} \sum_{ n \in N} \rho(n) \in \End(\cV_{G,\nu}),\]
and for each  $\omega \in \cI_{G,\nu}$ we let
\[  C^N_\omega \eqdef \Phi C_\omega \in \cV_{G,\nu}.\]
Since $\rho(n) \Phi = \Phi \rho(n)= \Phi$ for all $n \in N$, 
the elements $ C^N_\omega$   clearly belong to $\cV_{G,\nu}^N$ for all $\omega \in \cI_{G,\nu}$; furthermore, they span the subspace $\cV_{G,\nu}^N$ since if 
$v  \in \cV_{G,\nu}^N$ then $v = \Phi v$,  so as we can write $v$ as a linear combination of the basis elements $C_\omega$, we can do the same with the $ C^N_\omega$'s.

For all $n \in N$, we have
\begin{equation}\label{scalar} \phi(n,\omega)  C^N_{n\cdot \omega \cdot \nu(n)^{-1}} =\Phi \( \phi(n,\omega) C_{n\cdot \omega \cdot \nu (n)^{-1}}\)= \Phi \rho(n) C_{\omega} =  C^N_\omega.\end{equation}
Thus if we let $\cR$ be a set of representatives of the distinct $N$-orbits in $\cI_{G,\nu}$, then the set $\{ C^N_\omega : \omega \in \cR\}$ also spans $\cV_{G,\nu}^N$.
Each $ C^N_\omega$ is a  linear combination of the vectors $C_{\omega'}$ for $\omega'$ in the $N$-orbit of $\omega$.  
Therefore, to prove  that 
$\{ C^N_\omega : \omega \in \cR\}$ is also linearly independent and hence a basis for $\cV_{G,\nu}^N$ it is enough to show that $ C^N_\omega \neq 0$ for all $\omega \in \cI_{G,\nu}$. 
This follows since  the coefficient of $C_\omega$ in $ C^N_\omega$ is 
\[\frac{1}{|N|} \sum_{g \in C_{G,\nu}(\omega) \cap N} \phi(g,\omega) = \frac{|C_{G,\nu}(\omega)|}{|N|} \neq 0\] by condition (i).

Let $s : \cI_{G,\nu} \to \cR$ be the map which assigns $\omega' \in \cI_{G,\nu}$ to the unique $\omega \in \cR$ in the same $N$-orbit.  Then $\omega'$ and $\omega = s(\omega')$ also belong to the same left coset of $N$, since  
by definition 
\[\omega' =n \cdot \omega\cdot \nu(n)^{-1} = \omega \cdot \underbrace{(\omega^{-1} n \omega ) \cdot \nu(n)^{-1}}_{\in N}\qquad\text{for some $n \in N$}.\]
By condition (ii), it follows  
that  the map $  \omega N  \mapsto s(\omega)$ is  a well-defined bijection $\cI_{G/N,\nu} \to \cR$, and so we may define $C_{ \omega N} =  C^N_{s(\omega)}$ for $ \omega N \in \cI_{G/N,\nu}$. As noted above, the elements $C_{\omega N}$ for $ \omega N \in \cI_{G/N,\nu}$  form a basis for $\cV_{G,\nu}^N$.
Noting that $\Phi$ commutes with $\rho(g)$ for every $g \in G$ and that $ C^N_{\omega}$ is equal  by (\ref{scalar}) to a nonzero constant times $ C^N_{s(\omega)}$ for each $\omega \in \cI_{G,\nu}$, one checks that the action of $G/N$ on 
the basis elements $C_{ \omega N}$ has the prescribed form \eqref{quotient-claim}, which completes our proof.
\end{proof}

The motivating application of this proposition is the following corollary, which establishes part (1) of Theorem \ref{thm3} in the introduction.

\begin{cor}\label{part1-cor}
  If $G(r,p,n)$ has a GIM and $q$ is odd, then $G(r,p,q,n)$ has a GIM. In particular, if $\gcd(p,n) = 1$ and $q$ or $n$ is odd, then $G(r,p,q,n)$ has a GIM.
\end{cor}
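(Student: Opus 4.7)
The plan is to apply Proposition \ref{quotient-prop} to $G = G(r,p,n)$, the normal central subgroup $N = C_q$, and the involutive automorphism $\nu = \tau$. Since $G(r,p,n)$ has a GIM by hypothesis, Theorems \ref{thm1} and \ref{thm2} together force $\gcd(p,n) \leq 2$, so equality holds in Theorem \ref{f}. Combined with the fact that $G(r,p,n)$ admits no class-preserving outer automorphisms \cite[Proposition 3.1]{MM}, Lemma \ref{5.1sub} then lets us take the GIM to be with respect to $\tau$, producing a function $\phi$ as in Proposition \ref{quotient-prop}.

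To verify condition (i), I observe that for $g \in C_{G,\tau}(\omega)\cap C_q$ the centrality of $g$ reduces the twisted centralizer condition $g\omega\tau(g)^{-1}=\omega$ to $g = \tau(g)$. Writing $g = c^{rj/q}$ and using $\tau(c) = c^{-1}$ gives $c^{2rj/q}=1$, i.e., $q \mid 2j$; since $q$ is odd this forces $q \mid j$, whence $g=1$. Condition (i) thus holds trivially, because $\phi(1,\omega)=1$ is automatic from $\rho(1)=\mathrm{id}$. For condition (ii), I compute $\omega\tau(\omega) = (\pi^2,\pi^{-1}(x)-x)$ for $\omega = (\pi,x)$ and note that $\omega N \in \cI_{G/N,\tau}$ forces $\pi^2 = 1$ and $x_{\pi(i)}-x_i \equiv rj/q \modu r)$ for some integer $j$ and every $i \in [n]$. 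Any fixed point of $\pi$ immediately yields $rj/q \equiv 0 \modu r)$, while any $2$-cycle $(i,i')$ gives both $x_{i'}-x_i = rj/q$ and $x_i-x_{i'} = rj/q$, hence $2rj/q \equiv 0 \modu r)$ and thus $q \mid 2j$. In either case the oddness of $q$ forces $q \mid j$ and so $\omega\tau(\omega)=1$. Proposition \ref{quotient-prop} then delivers a GIM for $G(r,p,q,n) = G(r,p,n)/C_q$.

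For the ``In particular'' clause, the case of $q$ odd is immediate: $\gcd(p,n)=1$ ensures via Theorem \ref{thm1} that $G(r,p,n)$ has a GIM, and the first assertion applies. When $n$ is odd instead, I will reduce to this case by setting $q' = q_{\mathrm{odd}}$ (the odd part of $q$) and $p' = pq/q'$, and checking that $p'$ and $q'$ are positive divisors of $r$ with $p'q' = pq \mid rn$, and that $\gcd(p',n)=\gcd(p,n)=1$ and $\gcd(q',n)=\gcd(q,n)$. These verifications go through because $n$ being odd makes the $2$-part of $q$ coprime to $n$, and because the $2$-part of $pq$ divides $r$ whenever $n$ is odd. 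Theorem \ref{thm2} then yields $G(r,p,q,n) \cong G(r,p',q',n)$; since $\gcd(p',n)=1$ guarantees a GIM for $G(r,p',n)$ via Theorem \ref{thm1}, applying the first assertion to this isomorphic group (with $q'$ odd in place of $q$) finishes the argument.

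I expect the main technical content to be condition (ii) of Proposition \ref{quotient-prop}, where one must verify that the oddness of $q$ prevents $\omega\tau(\omega)$ from being a nontrivial scalar in $C_q$; the cycle-by-cycle computation above handles this cleanly. Everything else reduces to bookkeeping about divisibility and $\gcd$ relations.
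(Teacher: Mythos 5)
Your proof is correct and takes essentially the same route as the paper: apply Proposition \ref{quotient-prop} to $G(r,p,n)$ with $N=C_q$ and $\nu=\tau$, verify condition (i) via the computation $c^{ir/q}\cdot\omega\cdot\tau(c^{ir/q})^{-1}=c^{2ir/q}\omega$ together with the oddness of $q$, and handle the case of odd $n$ by transferring the $2$-part of $q$ into $p$ and invoking Theorem \ref{thm2}. The only differences are that where the paper cites \cite[Lemmas 5.1 and 5.2]{Ma} to reduce to the automorphism $\tau$ and \cite[Lemma 4.2]{Ca2} for condition (ii), you give self-contained substitutes (Lemma \ref{5.1sub} plus the $\gcd(p,n)\leq 2$ observation, and a direct cycle-by-cycle check), both of which are valid.
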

\begin{proof}
   Let $G=G(r,p,n)$, take $N$ to be the cyclic subgroup  of order $q$ generated by $c^{r/q} \in G$, and let $\tau \in \Aut(G)$ be the usual automorphism $(\pi,x) \mapsto (\pi,-x)$.  Assume $G$ has a GIM; by \cite[Lemmas 5.1 and 5.2]{Ma}, this GIM may be defined with respect to $\tau$.
We have
\[
   c^{ir/q}\cdot \omega\cdot \tau(c^{-ir/q}) =c^{2ir/q}\omega \qquad\text{for }\omega \in \cI_{G,\tau},
\]
and so $c^{ir/q}\in C_{G,\tau}(\omega)$ if and only if $2ir/q\equiv 0 \modu r)$. Since $q$ is odd this is equivalent to $i$ being divisible by $q$, so  $C_{G,\tau}(\omega) = \{1\}$ and condition (i) in Proposition \ref{quotient-prop} holds automatically. Condition (ii) likewise follows, from  \cite[Lemma 4.2]{Ca2}, so $G/N = G(r,p,q,n)$ has a GIM.

To prove the second statement in the corollary suppose $\gcd(p,n) = 1$. Then $G(r,p,n)$ has a GIM by Theorem \ref{thm1} so   $G(r,p,q,n)$ has a GIM if $q$ is odd. If $q$ is even but $n$ is odd, then   $q'\eqdef q/2^k$ is an odd integer for some positive integer $k$. By Theorem \ref{thm2}  we then have $G(r,p,q,n) \cong G(r,p',q',n)$ where $p'\eqdef 2^k p$; since $\gcd(p',n) = 1$ we likewise conclude that $G(r,p,q,n)$ has a GIM.
\end{proof}

\section{Conjugacy classes and characteristic subgroups}

\def\supp{\mathrm{supp}}

Here we prove a few miscellaneous results which will be of use in the next sections. As usual we let $r,p,q,n$ be positive integers with $p$ and $q$ dividing $r$ and $pq$ dividing $rn$.

First, we  define a ``colored cycle decomposition'' of an element $g\in G(r,n)$ in the following way.
Recall that the \emph{support} of a permutation $\pi \in S_n$, which we denote $\supp(\pi)$, is the set of $i \in [n]$ with $\pi(i) \neq i$. A \emph{cycle} in $S_n$ is a nontrivial permutation which acts transitively on its support; i.e., an element of the form $(i_1,i_2,\dots,i_l) \in S_n$ with $l \geq 2$. (Note we do not consider the identity to be a cycle.)
%
%
Call $\gamma \in G(r,n)$ a \emph{colored cycle} if either 
\begin{enumerate}
\item[(i)] $|\gamma| \in S_n$ is a  cycle and $z_i(\gamma) = 0$ for all $i \notin \supp(|\gamma|)$; 
\item[(ii)] $|\gamma| = 1$ and $z_i(\gamma)  \neq 0$ for exactly one index $i \in [n]$.
\end{enumerate}
We define the \emph{support} of a colored cycle to be the set of $i \in [n]$ such that either $i \in \supp(|\gamma|)$ or $z_i(\gamma) \neq 0$; we denote this set also as $\supp(\gamma)$.
This definition ensures that the following 
desirable property holds:

\begin{lem} Each
 $g  \in G(r,n)$ has a unique factorization as a product of disjoint colored cycles; i.e., there is a unique set of colored cycles $\{\gamma_1,\dots,\gamma_k\} \subset G(r,n)$ such that $\supp(\gamma_i)\cap\supp(\gamma_j ) = \varnothing$ for all $i\neq j$ and $g = \gamma_1 \gamma_2 \cdots \gamma_k$.
 \end{lem}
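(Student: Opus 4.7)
The plan is to give an explicit construction and then argue uniqueness via reduction to the corresponding statement in $S_n$ together with the values $z_i(g)$.

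For existence, I would start with the standard disjoint cycle decomposition $|g| = \pi_1 \pi_2 \cdots \pi_m$ of the permutation $|g| \in S_n$ into (non-trivial) cycles, and set $S_j \eqdef \supp(\pi_j) \subset [n]$. For each $j$, define $x^{(j)} \eqdef \sum_{i \in S_j} z_i(g)\, e_i \in (\ZZ_r)^n$ and put $\gamma_j \eqdef (\pi_j, x^{(j)}) \in G(r,n)$; by construction each $\gamma_j$ is a colored cycle of type (i) with $\supp(\gamma_j) = S_j$. Next, for each index $i \in [n]$ fixed by $|g|$ with $z_i(g) \neq 0$, define a type (ii) colored cycle $\gamma_i' \eqdef (1, z_i(g)\, e_i)$ with $\supp(\gamma_i') = \{i\}$. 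All these colored cycles have pairwise disjoint supports.

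To verify that their product equals $g$, note that because the supports are disjoint, the colored cycles pairwise commute: indeed, if $\gamma = (\pi',x')$ and $\delta = (\pi'',x'')$ have disjoint supports then $\pi',\pi''$ commute in $S_n$ and each fixes the support of the other, so the multiplication rule $(\pi',x')(\pi'',x'') = (\pi'\pi'', (\pi'')^{-1}(x') + x'')$ simplifies to $(\pi'\pi'', x' + x'')$, which is symmetric in the two factors. Applying this repeatedly, the product of all the $\gamma_j$'s and $\gamma_i'$'s equals $(\pi_1\cdots\pi_m, \sum_j x^{(j)} + \sum_i z_i(g) e_i)$, and this coincides with $(|g|, \sum_{i=1}^n z_i(g) e_i) = g$.

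For uniqueness, suppose $g = \gamma_1 \cdots \gamma_k$ is any disjoint colored cycle factorization. Applying the homomorphism $|\cdot| : G(r,n) \to S_n$ yields $|g| = |\gamma_1|\cdots|\gamma_k|$, a product of permutations (including identities from type (ii) factors) whose non-trivial supports are pairwise disjoint; hence the $|\gamma_j|$ of type (i) are precisely the cycles occurring in the unique disjoint cycle decomposition of $|g|$, while the type (ii) factors satisfy $|\gamma_j| = 1$ and their supports must be fixed points of $|g|$. The supports $\supp(\gamma_j)$ are thereby forced. Finally, since the supports are disjoint, the multiplication rule above shows that $z_i(g)$ equals $z_i(\gamma_j)$ for the unique $j$ with $i \in \supp(\gamma_j)$, and is $0$ if $i$ lies in no $\supp(\gamma_j)$. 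This pins down the $z_i$-values of each $\gamma_j$, matching the construction above, and forces the type (ii) factors to be exactly the $(1, z_i(g)e_i)$ with $i$ a fixed point of $|g|$ and $z_i(g) \neq 0$.

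The main point that requires care is the disjointness bookkeeping: verifying that the rewriting $(\pi',x')(\pi'',x'') = (\pi'\pi'', x' + x'')$ under disjoint supports is valid, and checking that this forces type (ii) cycles to arise exactly from non-zero $z_i(g)$ at fixed points of $|g|$ (so that the type (i) part cannot absorb them). Once this is in place, both existence and uniqueness reduce to the corresponding statements for $|g|$ in $S_n$ and for the coordinates of $x$ in $(\ZZ_r)^n$.
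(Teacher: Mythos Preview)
Your argument is correct; the paper itself omits the proof entirely, stating only that it is ``a straightforward exercise left to the reader.'' Your construction via the cycle decomposition of $|g|$ together with singleton factors at fixed points with nonzero $z_i(g)$, and your uniqueness argument via the homomorphism $|\cdot|$ and the disjoint-support multiplication identity $(\pi',x')(\pi'',x'')=(\pi'\pi'',x'+x'')$, is precisely the natural way to carry out this exercise.
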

 
 \begin{proof}
 The proof is a straightforward exercise left to the reader.
 \end{proof}

Define the \emph{length} of a colored cycle $\gamma \in G(r,n)$ to be the size of its support. Likewise define the \emph{color} of the cycle $\gamma \in G(r,n)$ to be the integer $\Delta(\gamma) \in \ZZ_r$. Note that while all cycles in $S_n$ have length at least two, we allow colored cycles of any positive integer length.

We now define a ``splitting index'' controlling how the $G(r,n)$-conjugacy class of an element $g$ decomposes into $G(r,p,n)$-conjugacy classes.
First, given $\gamma \in G(r,n)$ a colored cycle of length $l$ and color $a$, we define an integer 
\[ s(\gamma) \in \ZZ_{\GCD(a,l,r)}\] according to the following cases:
\begin{itemize}
\item If  $|\gamma| = (i_1,i_2,\dots,i_l)$ is a  cycle in $S_n$, then let
\[ s(\gamma) \eqdef \sum_{j=1}^l j z_{i_j}(\gamma) \in \ZZ_{\GCD(a,l,r)}.\]
\item If $|\gamma|=1$ then we define $s(\gamma) \eqdef 0 \in \ZZ_1 = \ZZ_{\GCD(a,l,r)}$.
\end{itemize}
This definition of $s(\gamma)$ does not depend on the ordering of the cycle $(i_1,i_2,\dots,i_l)$, 
because if $x_1,x_2,\dots,x_l\in \ZZ$ are any integers  and $a=x_1+x_2+\cdots+x_l$, then we have
\begin{equation}\label{somma}
   \sum_{j=1}^ljx_j \equiv \sum_{j=1}^ljx_{j+1} \modu \GCD(a,l)),\qquad\text{where we let }x_{l+1}\eqdef x_1.
\end{equation}
 In particular, one can check that the difference of the two sides of this congruence is $\pm(a-lx_1 )$. 

Expanding on this notation, we have  the following definition.
\begin{defn}
Fix a positive divisor $p$ of $r$ and an  element $g \in G(r,p,n)$. Suppose $g$ factors as the product of the disjoint colored cycles $\gamma_1,\gamma_2,\dots,\gamma_k \in G(r,n)$, where $\gamma_i$ has length $l_i$ and color $a_i$.
We then let 
\[ d_p(g)\eqdef\GCD(l_1,l_2,\ldots,l_k,a_1,a_2,\ldots,a_k,p)\] and we define the \emph{$p^{\mathrm{th}}$ splitting index} of $g$ to be 
\[
   s_p(g)\eqdef \sum _{i=1}^k s(\gamma_i) \in \mathbb Z_{d_p(g)}.
\]
Note that the right hand side  makes sense since $d_p(g)$ divides $\GCD(l_i,a_i,r)$ for each $i$, and that $s_p(g) = 0$ if $g$ has any colored cycles of length one.
%
 As the sum has no dependence on the labeling of the colored cycles $\gamma_i$, the splitting index $s_p(g)$ is automatically well-defined.
\end{defn}

Before describing the significant properties of this definition,
first let us consider an example.

\begin{exa} Let $g= (\sigma,x) \in G(4,4,8)$ where 
\[\sigma = (1,2,4,7)(3,5,8,6) \in S_8 \qquad\text{and}\qquad x = (0,1,2,2,0,2,1,0) \in (\ZZ_4)^8.\] We then take $\gamma_1=\(e_2 + 2e_4 + e_7, (1,2,4,7)\)$ and $\gamma_2=\(2e_3  +2 e_6, (3,5,8,6)\)$, giving  $a_1=a_2=0$ and $d_p(g)=4$ and 
\[\ba s_p(g)&=s(\gamma_1) + s(\gamma_2) \\&=\(1\cdot 0+2\cdot 1+3\cdot 2+4\cdot 1\)+\(1\cdot 2+2\cdot 0+3\cdot 0+4\cdot 2\)
\\
&=2\in \ZZ_{4}.
\ea
\]
\end{exa}

The following result  indicates the utility of this notion of splitting index.
In the special case when $\GCD(p,n) \leq 2$, this proposition coincides with \cite[Lemma 5.1]{CaF}.

\begin{prop}\label{grpnconjugacy}Let $g,g'\in G(r,p,n)$ and $h\in G(r,n)$. The following properties then hold:
\begin{enumerate}
\item[(1)] $d_p(h^{-1} gh) = d_p(g)$.
\item[(2)] $s_p(h^{-1}gh)=s_p(g)+\Delta(h)\in \mathbb Z_{d_p(g)}$. 
\item[(3)] The elements $g$ and $g'$ are conjugate in $G(r,p,n)$ if and only if they are conjugate in $G(r,n)$ and $s_p(g)=s_p(g').$
\end{enumerate}
\end{prop}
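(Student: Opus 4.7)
The plan is to treat the three parts in order, with the main work concentrated in (2); parts (1) and (3) will then follow relatively quickly.

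For (1), I would observe that conjugating $g$ by $h\in G(r,n)$ permutes its colored cycles: if $\gamma$ is a colored cycle with underlying cycle $(i_1,\ldots,i_l)$, then $h^{-1}\gamma h$ has underlying cycle $(|h|^{-1}(i_1),\ldots,|h|^{-1}(i_l))$, which has the same length $l$. Its color is $\Delta(h^{-1}\gamma h)=\Delta(\gamma)=a$ because $\Delta$ is a homomorphism to an abelian group. Hence the multiset of pairs $(l_i,a_i)$ is conjugation-invariant, which gives $d_p(h^{-1}gh)=d_p(g)$.

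For (2), I would compute $s(h^{-1}\gamma h)-s(\gamma)$ directly for a single colored cycle of type (i), with $h=(\pi,y)$. Using the multiplication formula $(\pi,x)(\sigma,y)=(\pi\sigma,\sigma^{-1}(x)+y)$, a short calculation gives that the color values of $h^{-1}\gamma h$ at the transformed support satisfy $z_{\pi^{-1}(i_j)}(h^{-1}\gamma h)=z_{i_j}(\gamma)+y_{\pi^{-1}(i_j)}-y_{\pi^{-1}(i_{j+1})}$, with indices cyclic modulo $l$. Therefore
\[
s(h^{-1}\gamma h)-s(\gamma)=\sum_{j=1}^l j\bigl(y_{\pi^{-1}(i_j)}-y_{\pi^{-1}(i_{j+1})}\bigr),
\]
and a telescoping rearrangement—essentially \eqref{somma} applied to the sequence $y_{\pi^{-1}(i_j)}$—rewrites this sum as $\sum_{k=1}^l y_{\pi^{-1}(i_k)}-l\cdot y_{\pi^{-1}(i_1)}$, which reduces to $\sum_{k=1}^l y_{\pi^{-1}(i_k)}$ modulo $\gcd(a,l)$ and hence modulo $d_p(g)$. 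Type (ii) cycles contribute nothing since their presence forces $d_p(g)=1$. Summing the cycle-by-cycle contributions (and noting that indices not lying in the support of any colored cycle have no effect because they only further reduce $d_p(g)$) yields $s_p(h^{-1}gh)-s_p(g)\equiv \sum_{i=1}^n y_{\pi^{-1}(i)}=\Delta(h)\pmod{d_p(g)}$.

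For (3), the forward direction is immediate: if $h\in G(r,p,n)$, then $\Delta(h)\in p\ZZ_r$, and since $d_p(g)$ divides $p$, (2) gives $s_p(g')=s_p(g)$. For the converse, suppose $g$ and $g'$ are conjugate in $G(r,n)$ via some $h$ with $s_p(g)=s_p(g')$; then (2) says $\Delta(h)\in d_p(g)\ZZ_r$. I would then identify the image of $\Delta$ on the centralizer $C_{G(r,n)}(g)$ as the subgroup of $\ZZ_r$ generated by the lengths $l_i$ and colors $a_i$ of the colored cycles of $g$; explicit generators are the cycles $\gamma_i$ themselves, which give $\Delta(\gamma_i)=a_i$, and diagonal matrices constant on $\supp(\gamma_i)$, which give multiples of $l_i$. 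Combined with the definition $d_p(g)=\gcd(l_1,\ldots,a_1,\ldots,p)$, this yields $p\ZZ_r+\Delta(C_{G(r,n)}(g))=d_p(g)\ZZ_r$, so I can replace $h$ by $hc$ for some $c\in C_{G(r,n)}(g)$ to achieve $\Delta(hc)\in p\ZZ_r$; the new conjugator then lies in $G(r,p,n)$.

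The main obstacle will be the bookkeeping in (2): checking that the boundary term $-l\cdot y_{\pi^{-1}(i_1)}$ genuinely vanishes modulo $d_p(g)$ for each cycle, and that the contributions from the various cycles combine to $\Delta(h)$ (including the careful handling of fixed indices with $z=0$). For (3), the secondary technical point is the claim that the $\Delta$-image of the centralizer in $G(r,n)$ is exactly the subgroup generated by the lengths and colors, which requires a small amount of explicit group-theoretic work but no new ideas beyond the standard centralizer description for monomial matrices.
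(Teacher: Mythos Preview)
Your approach is essentially the paper's: it too computes $s(h^{-1}\gamma h)-s(\gamma)$ cycle by cycle (first reducing to diagonal $h$ by a separate argument for permutation conjugation, whereas you treat general $h$ at once) and then sums; and for (3) it builds an explicit centralizer element from products of the $\gamma_i$ and of diagonal elements constant on each $\supp(\gamma_i)$, just as you propose.

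There is, however, a genuine gap in your part (2). The claim that ``indices not lying in the support of any colored cycle have no effect because they only further reduce $d_p(g)$'' is false under the paper's definitions: $d_p(g)$ is the $\gcd$ of $p$ together with the lengths and colors of the \emph{colored cycles} of $g$, and a trivial fixed point (an index $j$ with $|g|(j)=j$ and $z_j(g)=0$) is not a colored cycle, so it contributes nothing to that $\gcd$. Concretely, for $g=((1,2),0)\in G(4,2,3)$ one has $d_2(g)=\gcd(2,0,2)=2$, while $h=(1,(0,0,1))$ centralizes $g$ with $\Delta(h)=1$, so the identity $s_p(h^{-1}gh)=s_p(g)+\Delta(h)$ in $\ZZ_2$ fails. (The paper's own proof makes the same unjustified jump from $\sum_{j\in\bigcup_i\supp(\gamma_i)} z_j(h)$ to $\Delta(h)$; so part (2) as stated is in fact not quite correct when $g$ has trivial fixed points, though this does not affect the applications in the paper.) The same issue touches your description of $\Delta(C_{G(r,n)}(g))$ in (3): when $g$ has a trivial fixed point, that image is all of $\ZZ_r$, not only the subgroup generated by the $l_i$ and $a_i$. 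For (3) this is harmless, since you only need $p\ZZ_r+\Delta(C_{G(r,n)}(g))\supseteq d_p(g)\ZZ_r$, and that inclusion survives.
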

\begin{proof}
Throughout, we write  $\gamma_1,\dots,\gamma_s$ for the colored cycles of $g$, where $\gamma_i$ has length $l_i$ and color $a_i$.
Part (1) is an immediate consequence of the observation that conjugating a colored cycle in $G(r,n)$ preserves its length and color.

To prove part (2), note that if $\pi \in S_n$ then the colored cycles of $\pi^{-1} g \pi$ are precisely $\pi^{-1} \gamma_i \pi$ for $i \in [s]$, and we have \[\supp(\pi^{-1}\gamma_i \pi) = \pi^{-1} \( \supp(\gamma_i)\) \qquad\text{and}\qquad z_{\pi^{-1}(j)} (\pi^{-1} \gamma_i \pi) = z_{j}(\gamma_i)\]
for each $j \in [n]$.
It follows that $s(\gamma_i) = s(\pi^{-1} \gamma_i \pi)$ for each $i$, whence $s_p(\pi^{-1} g \pi) = s_p(g)$. To prove $s_p(h^{-1}gh)=s_p(g)+\Delta(h)$ we may therefore assume $|h| = 1$.  The colored cycles of $h^{-1} gh$ are then $h^{-1} \gamma_i h$ for $i \in [s]$, and the support of $h^{-1} \gamma_i h$ 
coincides with that of $\gamma_i$.
It is therefore enough to prove that if $\gamma$ is an arbitrary colored cycle in $G(r,n)$ of length $l$ and color $a$ and $|h|=1$, then
\[s(h^{-1}\gamma h) = s(\gamma) + \sum_{j \in \supp(\gamma)} z_j(h) \in \ZZ_{\GCD(a,l,r)}.\]
If $l=1$ then this equality holds vacuously, while if $l>1$ so that $|\gamma| = (i_1,i_2,\dots,i_l) \in S_n$, then one has $z_{i_j}(h^{-1} \gamma h) = z_{i_j}(\gamma) + z_{i_j}(h) - z_{i_{j+1}}(h)$ for each $j \in [l]$ where we define $i_{l+1} \eqdef i_1$. Thus 
\[ s(h^{-1} \gamma h) = \sum_{j=1}^l j z_{i_j} (\gamma) + \(\sum_{j=1}^l j z_{i_j}(h) - \sum_{i=1}^l j z_{i_{j+1}}(h)\) \in \ZZ_{\GCD(a,l,r)}.\] On the right, the first term is just $s(\gamma)$, while one computes that the parenthesized sum is $\sum_{j \in \supp(\gamma)} z_j(h) - l z_{i_1}(h) = \sum_{j \in \supp(\gamma)} z_j(h)  \in \ZZ_{\GCD(a,l,r)}$. Thus our claim holds, and the second  part follows.

Finally, to prove part (3) we note from part (2) that  if $g$ and $g'$ are conjugate in $G(r,p,n)$ then they have the same $p^{\mathrm{th}}$ splitting index.
Conversely, assume that $g' = h^{-1} g h $ and $s_p(g)=s_p(g')$. By part (2) we then have $\Delta(h)\equiv 0 \modu d_p(g))$, so
to show that $g$ and $g'$ are conjugate in $G(r,p,n)$ it suffices to produce an element $\xi\in G(r,n)$ which centralizes $g$ and has $\Delta(\xi)\equiv d_p(g) \modu p)$. In particular, for such a $\xi$ one would  have $\xi^j h \in G(r,p,n)$ for some $j$ and also $g ' =(\xi^j h)^{-1} g (\xi^j h)$.
To construct $\xi$,  let $n_1,\dots,n_s,m_1,\ldots,m_s,k$ be integers such that
\[
   d_p(g)=n_1l_1+\cdots+n_sl_s+m_1a_1+\cdots+m_sa_s+kp.
\]
For each $i \in [s]$ let $t_i\in G(r,n)$ be such that $|t_i|=1$ and $z_j(t_i)=1$ if $j\in \supp(\gamma_i)$ and $z_j(t_i)=0$ otherwise.
Now  let $\xi=t_1^{n_1}\cdots t_s^{n_s}\gamma_1^{m_1}\cdots \gamma_s^{m_s}$. It is clear that $\xi$ centralizes $g$ and that $\Delta(\xi)=n_1l_1+\cdots+n_sl_s+m_1a_1+\cdots+m_sa_s\equiv d_p(g) \modu p)$, which completes our proof.
\end{proof}

Our proof of the following lemma provides one application of the preceding proposition.
Here, given an element $g \in G(r,1,q,n)$, we write $\Ad(g)$ for the automorphism of $G(r,p,q,n)$ defined by $h\mapsto ghg^{-1}$.

\begin{lem}\label{new4.3}
   Let $g\in G(r,1,q,n)$  such that $\Ad(g)(\pi)$ and $\pi$ are conjugate in $\qc$ for all $\pi\in S_n$. Then either 
   \begin{enumerate}
   \item[(i)] $\Ad(g)=\Ad(h)$ for some $h\in G(r,p,q,n)$;
   \item[(ii)] $\Ad(g)=\Ad(h)$ for some $h\in G(r,p/2,q,n)$, and 
   \[r\equiv p\equiv q\equiv n\equiv 2^i \modu 2^{i+1})\] for an integer $i>0$.
   \end{enumerate}
\end{lem}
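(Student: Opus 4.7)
The plan is to lift $g$ to an element $\tilde g = (\sigma, x) \in G(r,n)$, set $\delta \eqdef \Delta(\tilde g) \in \ZZ_r$, and determine the possible residues of $\delta$ modulo $p$ imposed by the hypothesis.  The first step is to observe that the condition that $\Ad(g)(\pi)$ and $\pi$ are conjugate in $G(r,p,q,n)$ translates, in $G(r,p,n)$, to the statement that $\tilde g\pi\tilde g^{-1}$ is $G(r,p,n)$-conjugate to $\pi\cdot c^{kr/q}$ for some integer $k$ depending on $\pi$.  Whenever $\pi \in S_n$ has at least one fixed point, the fixed-point entry of $\tilde g\pi\tilde g^{-1}$ vanishes while that of $\pi\cdot c^{kr/q}$ equals $kr/q$; since $G(r,n)$-conjugacy preserves all colored-cycle data, this forces $k\equiv 0\modu q)$.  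Hence $\tilde g\pi\tilde g^{-1}$ is $G(r,p,n)$-conjugate to $\pi$ itself, and Proposition \ref{grpnconjugacy} yields the key congruence $\delta\equiv 0\modu d_p(\pi))$.

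Next, I would vary $\pi$ over single cycles of length $l$ for every $l\in\{2,\ldots,n-1\}$, obtaining $\delta\equiv 0\modu\gcd(l,p))$, and so $\delta\equiv 0\modu p^\ast)$ where $p^\ast\eqdef\lcm_{l=2}^{n-1}\gcd(l,p)$.  A prime-by-prime computation gives $p^\ast=p$ except when some prime-power factor $\pi^{v_\pi(p)}$ of $p$ strictly exceeds $n-1$.  The remaining freedom in the choice of lift---multiplication by $c^{r/q}$ (the $C_q$-ambiguity) and by arbitrary scalars $c^j\in G(r,n)$ (which centralize $G(r,p,q,n)$ inside $G(r,1,q,n)$ for $n\geq 3$, as one verifies directly)---shifts $\delta$ modulo $p$ by arbitrary multiples of $n$.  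Thus the obstruction to case (i) is the image of $\delta$ in $\ZZ_p/\gcd(n,p)\ZZ_p$, which vanishes automatically whenever $\gcd(n,p)\mid p^\ast$; this covers every case except when $n$ is a prime power $\pi^\beta$ and $v_\pi(p)\geq\beta$.

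To handle these remaining cases I would also use $\pi$ an $n$-cycle: the hypothesis only forces $\tilde g\pi\tilde g^{-1}$ to be $G(r,p,n)$-conjugate to $\pi\cdot c^{kr/q}$ for some $k$ with $nk\equiv 0\modu q)$, and Proposition \ref{grpnconjugacy} produces an additional congruence constraining $\delta$ modulo $\gcd(n,p)$ to a specific cyclic subgroup depending on $r$, $q$, and $n$.  The main obstacle will be the delicate prime-by-prime (especially $2$-adic) bookkeeping required to combine these constraints: verifying that the obstruction vanishes except in the tightly restricted regime $v_2(r)=v_2(p)=v_2(q)=v_2(n)=i$ for some $i\geq 1$, in which case the obstruction has order exactly $2$ in $\ZZ_{\gcd(n,p)}$.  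When the obstruction vanishes, the adjusted $\tilde g$ lies in $G(r,p,n)$ and $h$ is its image in $G(r,p,q,n)$, giving case (i); in the exceptional regime $\tilde g$ can be placed only in $G(r,p/2,n)$ and $h$ is its image in $G(r,p/2,q,n)$, giving case (ii).  In both cases $\Ad(g)=\Ad(h)$ holds because $gh^{-1}$ is a scalar in $G(r,1,q,n)$ and hence central.
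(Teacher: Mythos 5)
Your framework is essentially the paper's: translate conjugacy in the quotient to conjugacy in $G(r,p,n)$ up to a power of $c^{r/q}$, compare splitting indices via Proposition \ref{grpnconjugacy}, and finish by multiplying by a central scalar $c^j$ so that the adjusted element lands in $G(r,p,q,n)$ or $G(r,p/2,q,n)$. Your preliminary step with permutations having fixed points is correct (the colored-cycle data does force $k\equiv 0 \modu q)$ there, and then part (2)--(3) of Proposition \ref{grpnconjugacy} gives $\delta\equiv 0 \modu \GCD(l,p))$ for $2\le l\le n-1$), and your reduction to the case where $n$ is a prime power dividing $p$ is also correct --- but note this extra step only disposes of situations in which conclusion (i) holds outright, and the paper dispenses with it entirely by working with the $n$-cycle alone.

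The genuine gap is that the decisive step, the analysis of the $n$-cycle, is only announced, and that analysis is precisely where the lemma's content lies. What is missing, concretely: (a) the explicit congruence coming from the $n$-cycle $\pi$, namely that $G(r,p,n)$-conjugacy of $\tilde g\pi\tilde g^{-1}$ to $\pi c^{kr/q}$ (with $q\mid nk$ forced by applying $\Delta$) yields, via the splitting-index computation, $\delta+\tfrac{(n+1)nkr}{2q}\equiv 0 \modu \GCD(p,n))$ up to sign; (b) the divisibility argument showing that $\tfrac{(n+1)nkr}{2q}$ is a multiple of $\GCD(p,n)$ as soon as any one of $n+1$, $nk/q$, $r/p$, $r/q$ is even --- which gives case (i) --- and that if $k$ and all four of these are odd then $v_2(r)=v_2(p)=v_2(q)=v_2(n)=i>0$, i.e.\ exactly the congruences $r\equiv p\equiv q\equiv n\equiv 2^i \modu 2^{i+1})$ in (ii); and (c) the verification that in this residual case the correction term is still a multiple of $p/2$, hence of $\GCD(p/2,n)=\GCD(p,n)/2$, so that $\delta\equiv 0 \modu \GCD(p/2,n))$ (what you need is not that the obstruction ``has order exactly $2$'' but that it lies in the order-two subgroup), whence one can choose $j$ with $\delta+jn\equiv 0\modu p/2)$ and take $h=\tilde g c^j$ in $G(r,p/2,q,n)$. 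Your sentence deferring the ``delicate prime-by-prime (especially $2$-adic) bookkeeping'' is exactly the statement to be proved, so as written the proposal does not establish the lemma, although the missing computation is the one the paper carries out and would complete your argument.
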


\begin{proof}
Without loss of generality we may  assume  $g=t^a$ for an integer $a$, since elements of this form represent the distinct left cosets of $G(r,p,q,n)$ in $G(r,1,q,n)$. 
We wish to show  $\Ad(t^a) = \Ad(h)$ for an element $h$ of either $G(r,p,q,n)$ or $G(r,p/2,q,n)$. 

To this end, 
let $\pi=(1,2,\ldots,n)^{-1}$ be the inverse of the standard $n$-cycle in $S_n$.
By hypothesis, $\pi$ is then conjugate in $G(r,p,n)$ to $\Ad(t^a)(\pi) c^{\frac{kr}{q}}$ for some $k \in \{1,\dots,q\}$.  
Since  $\Delta$ is a homomorphism and $\Delta(\pi) = 0$, it follows that $\Delta(c^{\frac{kr}{q}})=0$ which implies $q$ is  a divisor of $nk$.

It is straightforward to compute, using the first part of the previous proposition, that \[d_p(\Ad (t^a)(\pi) c^{\frac{kr}{q}}) = d_p(\pi c^{\frac{kr}{q}}) = d_p(\pi)=\GCD(p,n)\] and that
$ s_p(\Ad (t^a)(\pi) c^{\frac{kr}{q}})=a+\tfrac{(n+1)nkr}{2q}
$
and 
$s_p(\pi)=0$.
Part (3) of Proposition \ref{grpnconjugacy} therefore reduces to the congruence
\begin{equation}\label{a-cong}
 a+\tfrac{(n+1)nkr}{2q} \equiv 0 \modu \GCD(p,n)).
\end{equation}
We wish to deduce from this  that either (i) $a $ is a multiple of $\GCD(p,n)$ or that (ii) $r\equiv p\equiv q\equiv n\equiv 2^i \modu 2^{i+1})$ for an integer $i>0$.
Since $q$ divides both $nk$ and $r$ and since $p$ divides $r$,  
the integer
$\tfrac{(n+1)nkr}{2q}$ is a multiple of $\GCD(p,n)$ if any of the integers $nk/q$  or $n+1$ or $r/p$ or $r/q$ are even. From the congruence \eqref{a-cong}, it follows that (i) holds unless $k$ and $nk/q$  and $n+1$ and $r/p$ and $r/q$ are all odd, in which case (ii) holds.

To complete the
 proof, we note that if $a $ is a multiple of $\GCD(p,n)$, then
there exists an integer $j$ such that $a + jn$ is a multiple of $p$, in which case $\Ad(t^a) = \Ad(h)$ for the element $h = t^a c^j \in G(r,p,q,n)$.
On the other hand, if (ii) holds 
then  
$\frac{(n+1)nkr}{2q}$ is a multiple of $p/2$, which is in turn a multiple of $\GCD(p/2,n) =\GCD(p,n)/2$. In this case it follows from the congruence \eqref{a-cong}   that $a$ is  a multiple of $\GCD(p/2,n)$, so there exists an integer $j$ such that $a + jn$ is a multiple of $p/2$, and we have $\Ad(t^a) = \Ad(h)$ for the element $h = t^a c^j \in G(r,p/2,q,n)$.
\end{proof}

Switching topics briefly, we now prove a result which, while not strictly needed, extends the scope of some of our proofs in the next section.
Recall that a subgroup $H$ of a group $G$ is \emph{characteristic} if $H$ is invariant under all automorphisms of $G$. We note that if $H$ is an abelian normal subgroup of $G$, then the image of $H$ under any automorphism of $G$ is also abelian and normal. Therefore, if $G$ has a \emph{unique}  abelian normal subgroup $H$ of a given size, then $H$ is necessarily characteristic. We will frequently appeal to this fact in the  proof of the following.

\begin{prop}\label{characteristic-prop} 
The diagonal subgroup $N = N(r,p,q,n)$ is  characteristic in $G=\qc$ if and only if $(r,p,q,n)$ is not one of the following twelve exceptions:
\begin{itemize}
\item[]
$(2,1,1,2)$ or
$(2,2,1,2)$ or
$(2,1,2,2)$ or

\item[]
$(4,1,2,2)$ or 
$(4,2,1,2)$ or
$(4,2,2,2)$ or
$(4,2,4,2)$ or
$(4,4,2,2)$ or

\item[]
$(3,3,1,3)$ or
$(3,3,3,3)$ or

\item[]
$(2,2,1,4)$ or
$(2,2,2,4)$.

\end{itemize}

\end{prop}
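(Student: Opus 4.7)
The plan is to characterize $N = N(r,p,q,n)$ as the unique abelian normal subgroup of $G = G(r,p,q,n)$ of its cardinality whenever $(r,p,q,n)$ lies outside the listed set of exceptions. This forces every automorphism to preserve it, and for the converse I would exhibit an automorphism moving $N$ in each of the twelve exceptional cases.

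For the nonexceptional direction, I would first classify the normal subgroups of $G$. Any normal subgroup $M$ of $G$ either sits inside $N$ or projects onto a nontrivial normal subgroup of $G/N \cong S_n$, which for $n\geq 5$ is $A_n$ or $S_n$, for $n=4$ is additionally the Klein four-group, and for $n=3$ is $A_3$. Subgroups of the first type are abelian with order dividing $|N|$, attaining equality only when equal to $N$. For subgroups of the second type, I would fix an element $(\pi,x) \in M$ with $\pi \neq 1$ and compute commutators with elements of $N$ using the semidirect-product formula $(\pi,x)(1,y)(\pi,x)^{-1} = (1,\pi(y))$; requiring the resulting commutators to lie in $M$ and the group to be abelian imposes stringent numerical restrictions. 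A careful combinatorial analysis, aided by Proposition \ref{grpnconjugacy} for tracking elements across cosets, should show that the twelve exceptional quadruples are precisely the parameter choices under which a second abelian normal subgroup of order $|N|$ exists.

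For the converse, in each exceptional case I would identify $G$ explicitly as a small concrete group and exhibit an automorphism that does not preserve $N$. The $n=2$ exceptions yield small metacyclic or dihedral-type groups in which a second abelian normal subgroup of order $|N|$ is directly visible, and an outer automorphism swapping it with $N$ can be written down on generators. The cases $(3,3,1,3)$ and $(3,3,3,3)$ correspond to very small groups (orders $27$ and $9$ respectively) whose automorphism groups are easily computed by hand. The cases $(2,2,1,4)$ and $(2,2,2,4)$ are the most striking: here $G$ is the Weyl group of type $D_4$ or its quotient by the center, and the classical triality outer automorphism permutes three abelian normal subgroups of order $|N|$, providing the required violation and explaining why the list must include them.

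The main obstacle is pinpointing exactly when the second abelian normal subgroup appears, and confirming that the list of twelve quadruples is neither too long nor too short. The forward analysis reduces to commutator bookkeeping once the normal subgroup structure is in place, but the parameter tracking between $r$, $p$, $q$, and $n$ — particularly involving the central quotient by $C_q$ — requires careful attention. The triality example for $D_4$ confirms that the phenomenon is genuine and nontrivial, and suggests the pattern that the converse argument should follow in the remaining exceptional cases.
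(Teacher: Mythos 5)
Your overall frame (characteristic because it is the unique abelian normal subgroup of its size; explicit automorphisms in the exceptional cases) matches the paper's opening move, but the forward direction as you state it rests on a false premise. You claim the twelve exceptional quadruples are \emph{precisely} the parameters for which a second abelian normal subgroup of order $|N|$ exists. Uniqueness of this kind is only a \emph{sufficient} condition for $N$ to be characteristic, and it genuinely fails at non-exceptional parameters. Concretely, take $(r,p,q,n)=(3,1,3,3)$, which is not on the list: in $G=G(3,1,3,3)$ the set
\[
H \;=\; \bigl\{\, \bigl((1,2,3)^a,\; b\,(0,1,2)\bigr) \;:\; a,b \in \{0,1,2\}\,\bigr\}
\]
is well defined (cyclic shifts of $(0,1,2)$ differ from it by multiples of $(1,1,1)$, which are trivial in the quotient by $C_3$), is an abelian normal subgroup of order $9=|N|$, and is distinct from $N$; yet $N$ is characteristic here, because $G/N\cong S_3$ is nonabelian while $G/H$ is abelian, so no automorphism can carry $N$ to $H$. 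So your proposed "commutator bookkeeping" characterization cannot yield the stated list: in the borderline cases (this one, and likewise $(4,4,4,4)$, $(4,4,2,4)$, $(2,1,2,4)$, and several $n=2$ parameters) one must verify characteristicness of $N$ directly rather than through uniqueness, which is exactly where the paper falls back on explicit (computer-aided) checks after its counting bound $|H|\le |V|\cdot|C_N(V)|$ (with $V$ the image of $H$ in $S_n$) reduces the problem to finitely many parameter values. Your plan never isolates or addresses these borderline cases, so the forward implication is not established.

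Two further points. The paper's forward argument is a clean size estimate via the centralizer in $N$ of the image of $H$ in $S_n$ (using that $S_n$ has very few abelian normal subgroups), which is considerably more tractable than an open-ended classification of normal subgroups; your appeal to Proposition \ref{grpnconjugacy} plays no real role here, since that result concerns conjugacy in $G(r,p,n)$, not normal-subgroup structure in the quotient. For the converse, your idea of exhibiting explicit automorphisms (e.g.\ triality for $(2,2,1,4)$ and $(2,2,2,4)$, where $G$ is $W(D_4)$ or its central quotient) is attractive and more conceptual than the paper's computational verification, though note the small slips: $G(3,3,1,3)$ and $G(3,3,3,3)$ have orders $54$ and $18$, not $27$ and $9$.
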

\begin{proof}
If $r=1$ then $N$ is trivial and if $n=1$ then $N=G$. In either case $N$ is automatically characteristic, so assume $r>1$ and $n>1$. Since  $N$ is an abelian  normal subgroup of $G$, to show that $N$ is characteristic, it suffices to show that every other abelian normal subgroup of $G$ contains fewer elements than $N$.
 
   If $n\geq 5$ then the only  abelian normal subgroup of $S_n$ is  trivial, so  every abelian normal subgroup of $G$ must  be contained in $N$, since this is the kernel of the natural surjection $|\cdot| : G \to S_n$. Therefore $N$ is characteristic. 

 Next, suppose $n=4$. The symmetric group $S_4$ has a unique nontrivial normal abelian subgroup $V$ given by the set 
 \[ V=\{1,\sigma_2,\sigma_3,\sigma_4\},\] where $\sigma_i$ is the unique fixed-point free involution in $S_4$ with  $\sigma_i(1)=i$. Suppose $H$ is an abelian normal subgroup of $G$ not contained in $N$. The image of $H$ under $|\cdot|: G \to S_4$ must then be equal to  $V$, and so if $g,h \in H$ such that $|g| = |h|$, then since $H$ is abelian $gh^{-1}$ must belong to the centralizer of $V$ in $N$; i.e., $gh^{-1}$ belongs to the subgroup
 \[ C_N(V)  \overset{\mathrm{def}} = \{a \in N : ava^{-1} = v\text{ for all }v \in V\}.\]
 In particular, this means that 
$ |H| \leq |V| |C_N(V)| = 4 |C_N(V)|.$
 It is straightforward to compute that 
\begin{itemize}
\item $C_N(V)=C(r,p,q,4)$ if $q$ is odd;
\item $C_N(V)=C(r,p,q,4)\times V'$ if $q$ is even, where $V'$ is the four-element subgroup  generated by $(1,(\tfrac{r}{2},\tfrac{r}{2},0,0))$ and $(1,(\tfrac{r}{2},0,\tfrac{r}{2},0))$ in $N$.
\end{itemize}
Thus, recalling that $|C(r,p,q,n)| = \frac{r}{pq} \gcd(p,n)$, we have 
\[ |H| \leq 4\cdot \tfrac{r}{pq} \cdot \gcd(p,4) \cdot \gcd(q,2)^2.\]
The order of $N$ is $\frac{r^4}{pq}$, and this exceeds the right hand side of the preceding inequality if
$(r,p,q)$ is not $(4,4,4)$, $(4,4,2)$, $(2,2,2)$, $(2,1,2)$, or $(2,2,1)$. If we are not in any of these five cases, consequently, it follows that $N$ is the unique abelian normal subgroup of $G$ of order $\frac{r^4}{pq}$, and therefore characteristic. 
In the five excluded cases, we have checked using the computer algebra system {\sc{Magma}} that $N$ is characteristic if and only if $(r,p,q)$ is $(4,4,4)$, $(4,4,2)$, or $(2,1,2)$.

Next, let $n=3$. The unique  nontrivial abelian subgroup of  $S_3$ is the cyclic group $A$ of order 3 generated by either three cycle. It follows as in the previous case that if $H$ is an abelian normal subgroup of $G$ not contained in $N$, then 
$ |H| \leq |A| |C_N(A)| = 3 |C_N(A)|.$
Here, one computes that 
\begin{itemize}
\item $C_N(A)=C(r,p,q,3)$ if $q$ is not divisible by three;
\item $C_N(A)=C(r,p,q,3)\times A'$ if $q$ is divisible by three, where $A'$ is the three-element subgroup  generated by $(1,(0,\tfrac{r}{3},\tfrac{2r}{3})) \in N$.
\end{itemize}
Thus, an abelian normal subgroup $H \not\subset N$ has \[ |H| \leq 3\cdot \tfrac{r}{pq} \cdot \gcd(p,3) \cdot \gcd(q,3),\]
and it follows that $N$ is the unique (hence characteristic) abelian normal subgroup of order $\frac{r^3}{pq}$ if
 $(r,p,q)$ is not $(3,3,3)$, $(3,1,3)$, or $(3,3,1)$. In the excluded cases, moreover, $N$ is characteristic in $G$ if and only if $(r,p,q) = (3,1,3)$.

Similarly, if $n=2$ and $H$ is an abelian normal subgroup of $G$ not contained in $N$,
then the $\{ |h| : h \in H\} = S_2$ and 
$|H| \leq |S_2| |C_N(S_2)| = 2 |C_N(S_2)|$. Computing the centralizer of $S_2$ in $N$ is somewhat more complicated than in the previous cases, but nevertheless one checks that if $t = (1,e_1) \in N$ and $c = (1,e_1+e_2) \in N$ then
\begin{itemize}
\item $C_N(S_2)=C(r,p,q,2)$ if $q$ is odd;
\item $C_N(S_2)=C(r,p,q,2) \cup t^{r/2} \cdot C(r,p,q,2)$ if $q$ is even and $r/p$ is even;
\item $C_N(S_2)=  C(r,p,q,2) \cup c^{p/4}t^{r/2} \cdot C(r,p,q,2)$ if $q$ is even, $r/p$ is odd, and $p$ is divisible by four; 
\item $C_N(S_2)=C(r,p,q,2)$ if $q$ is even, $r/p$ is odd, and $p$ is not divisible by four;
\end{itemize}
In each case, the order of $C_N(S_2)$ has order at most $\tfrac{r}{pq} \cdot \gcd(p,2) \cdot \gcd(q,2)$, and so the order of any abelian normal subgroup $H \not\subset N$ satisfies the inequality \[ |H| \leq 2\cdot \tfrac{r}{pq} \cdot \gcd(p,2) \cdot \gcd(q,2).\]
It follows that $N$ is the unique abelian normal subgroup of order $\frac{r^2}{pq}$ if $r\notin \{2,4,6,8\}$. If $r $ is one of these excluded values, one can checks (e.g., using {\sc{Magma}}) that $N$ is characteristic in $G$ if and only if $r >4$ or  $(r,p,q)$ is $(2,2,2)$, $(4,1,1)$, $(4,1,4)$, or $(4,4,1)$.
\end{proof}

\section{Automorphisms of $G(r,p,q,n)$}

This section contains a number of technical lemmas which together describe the form of an arbitrary automorphism of $G(r,p,q,n)$. 
Throughout we write $s_i  \eqdef (i,i+1) \in S_n$ for the simple transpositions in $S_n$
and
 let $s,t,c \in G(r,1,q,n)$ denote the elements 
\[s\eqdef (1,e_1-e_2), \qquad t\eqdef (1,e_1),\qquad \text{and}\qquad 
c\eqdef (1,e_1+e_2+\cdots+e_n).\]
The elements $s_1,\dots,s_{n-1}, s, t^p$ then generate $G(r,p,q,n)$; note that the central element $c$ coincides with the identity if $r=q$. As in Section \ref{prelim-sect} we let
$N(r,p,q,n)$ denote the abelian normal subgroup of elements of the form $(1,x)\in G(r,p,q,n)$. We also define $C(r,p,q,n)$ as the subgroup of $\qc$ consisting of elements equal to $c^i$ for some $i \in \ZZ$.

Our first proposition establishes the existence of a generic type of outer automorphism of $G(r,p,q,n)$.
Recall here that $C_q$ denotes the cyclic subgroup of scalar matrices of order $q$ in $G(r,p,n)$, so that $G(r,p,q,n) = G(r,p,n)/C_q$. 

\begin{prop}\label{alpha-thm}
Assume $n\geq 3$ and let $d_0= \GCD(p,q,n)$. Suppose $j,k\in \mathbb Z_r$ and $z\in C(r,p,q,n)$ such that $z^2=1$. 

\begin{enumerate}
\item[(1)] The map $\alpha_{j,k,z} : G(r,p,n) \to \qc$ given by
\[
\alpha_{j,k,z}(\pi,x)=z^{\ell(\pi)}c^{\frac{\Delta(x)}{d_0} k}(\pi,jx),\qquad\text{for }(\pi,x) \in G(r,p,n),
\]
is a well-defined homomorphism whose kernel contains $C_q$. 

\item[(2)] The induced homomorphism $\alpha_{j,k,z}:\qc\rightarrow\qc$ is an automorphism if and only if
\begin{equation}\label{jrd0}
\GCD(j,r)=\GCD\Big(\tfrac{n}{d_0}k+j,\tfrac{rn}{pq},\tfrac{r}{q}\Big)=1.
\end{equation}
\end{enumerate}
\end{prop}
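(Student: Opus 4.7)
The plan is to verify part~(1) directly and to establish part~(2) by analyzing the kernel of $\alpha_{j,k,z}$ on the scalar and non-scalar parts of $G(r,p,n)$ separately.

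For part~(1), the one delicate point is the well-definedness of $c^{\Delta(x)k/d_0}$ in $G(r,p,q,n)$. The expression $\Delta(x)/d_0$ is an integer because $d_0 \mid p$ and $\Delta(x) \in p\mathbb{Z}$, but it is only determined modulo $r/d_0$; however, replacing it by a multiple of $r/d_0$ multiplies the scalar by $c^{kr/d_0} = (c^{r/q})^{kq/d_0} \in C_q$, which is trivial in the quotient (using $d_0 \mid q$). The homomorphism property then follows by direct expansion, using the centrality of $c$ and $z$, the identity $\Delta(\sigma^{-1}(x)+y) = \Delta(x)+\Delta(y)$, and the fact that $z^{\ell(\pi\sigma)} = z^{\ell(\pi)+\ell(\sigma)}$ because $z^2=1$ and the parity of $\ell$ is multiplicative. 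Finally, $\alpha_{j,k,z}(c^{r/q}) = c^{(r/q)(nk/d_0+j)}$ is a power of $c^{r/q}$, so $C_q \subseteq \ker \alpha_{j,k,z}$.

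For part~(2), the induced endomorphism of $G(r,p,q,n)$ is an automorphism iff injective, so it suffices to show that $\ker \alpha_{j,k,z} = C_q$ as a map $G(r,p,n) \to G(r,p,q,n)$ precisely under condition~\eqref{jrd0}. Reading off underlying permutations forces $\pi = 1$ on any kernel element, and expanding $\alpha_{j,k,z}(1,x) \in C_q$ then gives that $jx_1, \ldots, jx_n$ are all equal modulo $r$. If $\GCD(j,r) > 1$, I will exhibit the counterexample $x = (r/\GCD(j,r))(e_1-e_2)$, which (using $n \geq 3$) satisfies $\Delta(x)=0$, $jx=0$, and $(1,x) \notin C_q$; this shows $\GCD(j,r)=1$ is necessary.

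Assuming $\GCD(j,r)=1$, the constancy of $jx_i$ forces $x$ to be a scalar vector $a(e_1+\cdots+e_n)$, reducing the analysis to the cyclic subgroup $A = C(r,p,n) = \langle c^{p/\gcd(p,n)} \rangle$. On $A$ the map acts by $c^a \mapsto c^{a(nk/d_0+j)}$, and descends to multiplication by $nk/d_0+j$ on the cyclic quotient $A/C_q$ of order $M := r\gcd(p,n)/(pq)$; this is bijective iff $\GCD(nk/d_0+j,\, M) = 1$. A prime-by-prime valuation check yields the identity $M = \GCD(rn/(pq),\, r/q)$ (the key input being $v_\pi(M) = v_\pi(r/q) + \min(v_\pi(n)-v_\pi(p), 0)$), at which point the combined criterion becomes exactly~\eqref{jrd0}. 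The main technical obstacles are this valuation identity and the well-definedness check in part~(1); once these are in hand, the remainder is routine manipulation of cyclic groups.
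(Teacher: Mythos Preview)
Your proof is correct and follows essentially the same route as the paper's: both verify well-definedness via the ambiguity $c^{kr/d_0}\in C_q$, reduce the kernel analysis to the scalar subgroup $C(r,p,n)$ once $\gcd(j,r)=1$ is known, and use the identity $\tfrac{r}{pq}\gcd(p,n)=\gcd(\tfrac{rn}{pq},\tfrac{r}{q})$ to rewrite the cyclic-group condition as \eqref{jrd0}. The only noticeable difference is that the paper obtains the necessity of $\gcd(j,r)=1$ from the order of $s=(1,e_1-e_2)$ in the quotient (which is $r$ since $n\ge 3$), whereas you exhibit the explicit kernel element $(1,\tfrac{r}{\gcd(j,r)}(e_1-e_2))$; these are two sides of the same observation. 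One small point you leave implicit but the paper states explicitly: the image actually lands in $G(r,p,q,n)$, since $\Delta\bigl(c^{\Delta(x)k/d_0}\bigr)=\tfrac{n}{d_0}\Delta(x)k$ is a multiple of $\Delta(x)$ and hence of $p$.
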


\begin{proof}
If $q=1$ this is exactly  \cite[Lemma 4.2]{Ma}.
The right-hand side of the definition of $\alpha_{j,k,z}(\pi,x)$ does not depend on the representative of $\Delta(x)$ chosen modulo $r$ because $c^{\frac{r}{d_0}}$  is a power of  $c^{\frac{r}{q}}$ and hence equal to the identity in $\qc$.
Similarly, 
 $\alpha_{j,k,z}(\pi,x)$ is a well-defined element of $G(r,p,q,n)$ because \[\Delta\(c^{\frac{\Delta(x)}{d_0} k}\)=\tfrac{n}{d_0}\Delta(x)k\] is a multiple of $\Delta(x)$ and hence of $p$.
We conclude that the map $\alpha_{j,k,z}$ is well-defined, and proving that it is a homomorphism is an easy exercise left to the reader.

In what follows we abbreviate by defining $\alpha=\alpha_{j,k,z}$.
We have $\ker \alpha \supset C_q$ since
\[
\alpha(c^{r/q})=c^{\frac{rn}{d_0q}k}\cdot c^{j\frac{r}{q}}=c^{(\frac{n}{d_0}k+j)\frac{r}{q}} = 1 \in \qc,\]
 so $\alpha$ descends to  a well-defined homomorphism $\alpha:\qc \to \qc$. 
Suppose $\alpha$ is an automorphism of $\qc$. Since $n>2$ the element $s$ has order $r$ in $\qc$, and so we have $\GCD(j,r)=1$ since $\alpha(s)=s^j$. On the other hand, if $i_0\eqdef \frac{p}{\GCD(p,n)}$ then $c^{i_0}$ has order $\frac{r}{pq}\GCD(p,n)$ in $\qc$ and generates the subgroup $C(r,p,q,n)$. The element
\[
   \alpha(c^{i_0})=c^{(\frac{n}{d_0}k+j)i_0}.
\]
 must also have order $\frac{r}{pq} \GCD(p,n)$, and this  occurs if and only if
\[
   \GCD\Big(\tfrac{n}{d_0}k+j,\tfrac{r}{pq}\GCD(n,p)\Big)=\GCD\Big(\tfrac{n}{d_0}k+j,\tfrac{rn}{pq},\tfrac{r}{q}\Big)=1.
\]
Hence if $\alpha$ is an automorphism then equation \eqref{jrd0} holds. Furthermore, if \eqref{jrd0} holds then $\alpha$ restricts to an automorphism of $C(r,p,q,n)$.

Conversely, suppose \eqref{jrd0} holds. To show that $\alpha$ is an automorphism it is enough to prove injectivity. 
If $(\pi,x)\in\ker(\alpha)$ then we clearly have $\pi=1$ and $jx$ must be of the form $ja(e_1+\dots + e_n)$ for some $a \in \ZZ_r$. Since $\GCD(j,r)=1$, we have $x= a(e_1+\dots +e_n)$, and so $(\pi,x)\in C(r,p,q,n)$. As we have already observed that 
\eqref{jrd0} implies that $\alpha$ restricts to an automorphism of $C(r,p,q,n)$,
  we have $(\pi,x)=1$.
\end{proof}

We will also require the following  construction of a certain exceptional automorphism of $G(r,p,q,4)$.

\begin{prop}\label{speaut4} 
Let $r,p,q$ be positive integers with $p$ and $q$ dividing $r$ and $pq$ dividing $4r$.
 Assume in addition that
\begin{enumerate}
\item[(i)] $q$ is even;
\item[(ii)]  if $r/p$ is odd, then  $r/q$ is even and  $\frac{4r}{pq}$ is odd.
\end{enumerate}
 Then there exists a unique automorphism $\phi$ of $G(r,p,q,4)$ with
\begin{equation}
\label{phi}\begin{aligned} \phi(s_i)&=\left\{\begin{array}{ll}t_{i+2}\cdot s_i&\text{if $r/p$ is even,}\\ t_{i+2} \cdot s_i\cdot  c^{\frac{r}{2q}}&\text{if $r/p$ is odd, }\end{array}\right.
&&\qquad\text{for $i \in \{1,2,3\}$,}
\\
\phi(x) &= x&&\qquad\text{for $x \in N$},
\end{aligned}
\end{equation}
where $t_j=(1,e_j)^{r/2} \in N(r,p,q,4)$ for  $j \in \{1,2,3,4\}$, and  where the indices are taken modulo 4 (so that $t_5 = t_1$).
\end{prop}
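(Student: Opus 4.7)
The uniqueness of $\phi$ is immediate: since $s_1, s_2, s_3$ together with $N = N(r,p,q,4)$ generate $G(r,p,q,4)$ and $\phi$ is prescribed on both sets by \eqref{phi}, at most one such map exists. For existence, the plan is to verify that \eqref{phi} respects the defining relations of $G(r,p,q,4)$ so that $\phi$ extends to a homomorphism, and then observe that $\phi$ is an involution.

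Set $u \eqdef c^{r/(2q)} \in G(r,n)$ when $r/p$ is odd (which makes sense since $r/q$ is then even by (ii)) and $u \eqdef 1$ otherwise, so that $\phi(s_i) = u \cdot t_{i+2} s_i$ uniformly, with $u$ central in $G(r,n)$. Two scalar identities underlie the calculations below: $u^2 = c^{r/q}$ and $t_1 t_2 t_3 t_4 = c^{r/2}$, and each projects to the identity in $G(r,p,q,4)$---the former because $c^{r/q}$ generates $C_q$, the latter because $q$ even forces $r/q$ to divide $r/2$. A direct computation with the multiplication rule shows that $\phi(s_i)$ actually represents an element of $G(r,p,n)$: the diagonal sum is $r/2$ when $r/p$ is even, and $r/2 + 2r/q = (p/2)(r/p + 4r/(pq))$ when $r/p$ is odd; in the latter case divisibility by $p$ amounts to $r/p + 4r/(pq)$ being even, which holds precisely because both summands are odd by hypothesis (ii).

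To verify the relations, I will use $u$ central and the identity $s_i t_j s_i^{-1} = t_{s_i(j)}$. The action on $N$ is unchanged since $u$ and $t_{i+2}$ centralize $N$. The quadratic relation reduces to $\phi(s_i)^2 = u^2 \cdot t_{i+2} t_{s_i(i+2)} = t_{i+2}^2 = 1$, using that $s_i$ fixes $i+2 \pmod 4$. The commuting relation expands to $(\phi(s_1)\phi(s_3))^2 = t_3 t_2 t_4 t_1 = 1$. The braid relation, which is the main technical step, uses $\pi \eqdef s_1 s_2 = (1,2,3)$ to give
\[ (\phi(s_1)\phi(s_2))^3 = u^6 \cdot t_3 t_4 \cdot t_{\pi(3)}t_{\pi(4)} \cdot t_{\pi^2(3)}t_{\pi^2(4)} = t_3 t_4 \cdot t_1 t_4 \cdot t_2 t_4 = t_1 t_2 t_3 t_4 = 1, \]
and $(\phi(s_2)\phi(s_3))^3 = 1$ follows by symmetry. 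Finally, $\phi^2(s_i) = u^2 t_{i+2}^2 s_i = s_i$ and $\phi^2$ fixes $N$, so $\phi^2 = \mathrm{id}$, which shows $\phi$ is an automorphism. The only real obstacle is the careful bookkeeping when $r/p$ is odd---the factor $u$ is not itself an element of $G(r,p,n)$, but enters only in even powers throughout, and those powers collapse via $u^2 = c^{r/q} = 1$.
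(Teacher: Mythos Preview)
Your proof is correct and follows essentially the same approach as the paper's: verify that the images $\phi(s_i)$ satisfy the Coxeter relations for $S_4$, check compatibility with the conjugation action on $N$, and extend via the semidirect product decomposition $G(r,p,q,4) = N \rtimes S_4$. Your uniform treatment of both parity cases through the central factor $u$ (where the paper handles $r/p$ even in detail and says the odd case ``follows similarly''), and your use of $\phi^2=\mathrm{id}$ to establish bijectivity (where the paper simply observes injectivity), are minor presentational variations rather than a different argument.
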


\begin{proof}
Let $G=G(r,p,q,4)$ and $N =N(r,p,q,4)$. To show that our formula for $\phi(s_i)$ 
is a well-defined element of $G$ it is enough to check that $\frac{r}{2}$ is multiple of $p$ if $r/p$ is even and that $\frac{r}{2} + \frac{4r}{2q}$ is a  multiple of $p$ if $r/p$ is odd. The first assertion is immediate, and the second follows by assumption (ii) since $\frac{r}{2} + \frac{4r}{2q}=\Big(\frac{r}{p}+\frac{4r}{pq}\Big)\frac{p}{2}$. 

We now show that $\phi$ extends to a homomorphism when  $r/p$ is even; the  case when $r/p$ is odd follows similarly. To this end, we claim that the formula
$s_i \mapsto t_{i+2}s_i$ defines a homomorphism $\phi: S_4 \to   G$.
It is straightforward to check that the images of the generators $s_1,s_2,s_3 \in S_4$ satisfy the relevant Coxeter relations, 
after  noting that $t_1t_2=t_3t_4$ since $q$ is even and observing that $t_j$ and $s_i$ commute unless $j=i+\epsilon$ for some $\epsilon \in \{0,1\}$,  in which case case $t_{i+\epsilon}s_i=s_it_{i+1-\epsilon}$.

Now, every $g\in G$ can be written uniquely as $g= \sigma\cdot x = (\sigma,x)$ with $x\in N$ and $\sigma\in S_4$,
and  one checks that the formula $ \sigma\cdot x \mapsto  \phi(\sigma)\cdot x$ defines an endomorphism of $G$, using the fact that
  $\phi(s_i) x \phi(s_i)^{-1} = s_i x s_i^{-1}$ for all $x \in N$, whence $\phi(\sigma) x \phi(\sigma)^{-1} = \sigma x \sigma^{-1}$ for all $\sigma \in S_4$ and $x \in N$.
This endomorphism is clearly injective 
and is therefore an automorphism.
\end{proof}

For the duration of this section we write $G = G(r,p,q,n)$ and $N=N(r,p,q,n)$ where $r,p,q,n$ are fixed positive integers 
with $p$ and $q$ dividing $r$ and $pq$ dividing $rn$.
In the following lemma,
recall that   $|\cdot| : G\to S_n$ denotes  the standard projection map  $(\pi,x) \mapsto \pi$.

\begin{lem}\label{Nchar} 
   Let $\nu\in \Aut(G)$   with $\nu(N)=N$ and define $\bar \nu : S_n \to S_n$ as the map 
   \[ \bar \nu (\pi) = |  \nu( \pi,0)|\quad\text{for } \pi \in S_n.\]
   The following properties then hold:
   \begin{enumerate}
   \item[(1)] The map $\bar \nu$ is an automorphism of $S_n$.
   \item[(2)] The automorphism $\bar \nu \in \Aut(S_n)$ is inner 
if $(r,p,q,n) \neq (1,1,1,6)$.
\end{enumerate}
\end{lem}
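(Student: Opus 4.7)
For part (1), the strategy is a direct quotient argument. The projection $|\cdot|\colon G\to S_n$ is a surjective homomorphism with kernel $N$, and identifies $G/N\cong S_n$ via $(\pi,0)N\mapsto\pi$. Since $\nu(N)=N$, the automorphism $\nu$ descends to an automorphism of $G/N$, which under this identification is precisely the map $\bar\nu$ of the statement; multiplicativity and bijectivity are then immediate from those of $\nu$.

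For part (2), $\mathrm{Out}(S_n)$ is trivial for every $n\ne 6$ (and $\mathrm{Aut}(S_n)=1$ for $n\le 2$), so $\bar\nu$ is automatically inner unless $n=6$. For the remaining case I argue by contradiction: assume $n=6$, $(r,p,q)\ne(1,1,1)$, and $\bar\nu$ is outer. The outer automorphism of $S_6$ sends transpositions to fixed-point-free involutions of cycle type $(2,2,2)$; after composing $\nu$ with a suitable inner automorphism we may assume $\sigma\eqdef\bar\nu(s_1)=(1,2)(3,4)(5,6)$. The crucial observation is that $\nu|_N\colon N\to N$ is an abelian group isomorphism intertwining the conjugation actions of $s_1$ and $\sigma$ via $\bar\nu$:
\[ \nu|_N(\pi\cdot x)=\bar\nu(\pi)\cdot\nu|_N(x)\qquad\text{for all }\pi\in S_6,\ x\in N. \]
This follows from applying $\nu$ to $(\pi,0)(1,x)(\pi,0)^{-1}=(1,\pi(x))$ and checking that the affine corrections arising from the lifts $\nu(\pi,0)=(\bar\nu(\pi),y_\pi)$ cancel. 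Consequently, writing $N^\tau$ for the $\tau$-fixed subgroup of $N$, one has $|N^{s_1}|=|N^\sigma|$.

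The plan is now to compute both sides and contradict this equality. A coset $x+C_q\in N$ is $s_1$-fixed iff $s_1(x)-x\in C_q$; since the left side is supported in $\{1,2\}$ while $C_q$ is scalar, this forces $x_1=x_2$, and a direct count yields $|N^{s_1}|=r^5/(pq)$. For $\sigma$, the condition $\sigma(x)-x\in C_q$ has up to two families of solutions: the ``symmetric'' $x$'s with $x_i=x_{\sigma(i)}$ for every $i$, and (when both $q$ and $r$ are even) the shifted family $x_i-x_{\sigma(i)}=r/2$. A case analysis on the parity of $p$ shows $|N^\sigma|\le 4r^3/(pq)$, so $|N^{s_1}|/|N^\sigma|\ge r^2/4>1$ for every $r\ge 3$. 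The borderline case $r=2$ (so $p,q\in\{1,2\}$) requires inspecting the four possibilities individually; each yields $|N^{s_1}|=2|N^\sigma|$. In every case $|N^{s_1}|>|N^\sigma|$, the required contradiction, so $\bar\nu$ outer forces $r=1$ and hence $(r,p,q,n)=(1,1,1,6)$. The main technical obstacle is the parity-dependent case analysis for $|N^\sigma|$, but only loose bounds are needed to clinch the final inequality.
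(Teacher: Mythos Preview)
Your approach is correct and essentially identical to the paper's: both arguments reduce to comparing the orders of the centralizers $C_N((1,2))$ and $C_N((1,2)(3,4)(5,6))$ (your $N^{s_1}$ and $N^{\sigma}$) via the intertwining of the $S_6$-actions, and observing that these orders can coincide only when $r=1$. One harmless slip: for $(r,p,q)=(2,1,1)$ the ratio $|N^{s_1}|/|N^{\sigma}|$ is $32/8=4$, not $2$, but this of course still yields the required strict inequality.
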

\begin{proof} 
The map $\bar \nu$ is automatically a homomorphism $S_n \to S_n$. It  is injective (and hence an automorphism) since $\nu(g) \in N$ if and only if $g \in N$.

Since   $S_n$ has no outer automorphisms if $n\neq 6$, to prove the lemma it suffices to  
assume that $n=6$ and that $\bar \nu$ is not inner, and argue that $r=p=q=1$.  In this case, the image of the 2-cycle $(1,2) \in S_6$ under $\bar \nu$ is necessarily a product of three disjoint 2-cycles $(i_1, i_2)(i_3,i_4)(i_5,i_6)$, as $S_6$ has only one nontrivial coset of outer automorphisms.
Since $\nu(N) = N$,
the centralizers $C_N((1,2))$ and $C_N((i_1, i_2)(i_3,i_4)(i_5,i_6))$ have the same order (the first subgroup is the image of the second under $\nu$).
On the other hand, one computes that the first group has $\frac{r^5}{pq}$ elements, while the second has either $2\frac{r^3}{pq} $ elements (if $p$ and $q$ are both even but $r/2$ is odd) or $\frac{r^3}{pq}\cdot \gcd(p,2)\cdot \gcd(q,2)$ elements (in all other cases).  As these numbers are equal, we must have $r=p=q=1$.
\end{proof}

We say that an automorphism $\nu \in \Aut(G)$ \emph{preserves the projection to $S_n$} if for all $(\pi,x)\in G$ there exists $y\in \mathbb Z_r^n$ such that $\nu(\pi,x)=(\pi,y)$. 
As we see in our next lemma, this property places strong conditions on the form of $\nu$.

\begin{lem}\label{prepro}
   Assume $n\geq  3$ and suppose $\nu\in \Aut(G)$ is an automorphism that preserves the projection to $S_n$. Then there exists $z\in C(r,p,q,n)$ with $z^2=1$ and  integers $a_1,a_2,\dots, a_{n-1}$ such that
\[
   \phi\circ \nu(s_i)=z\left( s_i, a_i e_i - a_i e_{i+1}\right),\qquad\text{for all }i \in [n-1],
\]
where either 
\begin{enumerate}
\item[(i)] $\phi$ is the identity automorphism;
\item[(ii)] $\phi$ is defined as in Equation \eqref{phi}, in the case that $n=4$ and  $(r,p,q)$ satisfy  the hypotheses of Proposition \ref{speaut4}.
\end{enumerate}
\end{lem}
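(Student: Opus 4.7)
The strategy is to parametrize $\nu$ on simple reflections by writing $\nu(s_i) = (s_i, y_i)$ for each $i \in [n-1]$, where $y_i \in \mathbb{Z}_r^n$ is well-defined modulo the scalar subgroup $\langle (r/q)(e_1+\cdots+e_n) \rangle$, and then to extract constraints on the vectors $y_i$ from the three families of Coxeter relations $s_i^2 = 1$, $s_i s_j = s_j s_i$ for $|i-j| \geq 2$, and $s_i s_{i+1} s_i = s_{i+1} s_i s_{i+1}$. The goal is to show that after multiplying each $\nu(s_i)$ by a common scalar $z \in C(r,p,q,n)$ with $z^2 = 1$ (and possibly first composing with the exceptional $\phi$ from Proposition \ref{speaut4} in the $n=4$ case), one obtains the asserted form $(s_i, a_i e_i - a_i e_{i+1})$.

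First, applying the product formula $(\pi,x)(\sigma,y) = (\pi\sigma, \sigma^{-1}(x)+y)$ to the relation $s_i^2 = 1$ shows that $s_i(y_i) + y_i \equiv k_i (r/q)(e_1+\cdots+e_n)$ in $\mathbb{Z}_r^n$ for some integer $k_i$. Reading off coordinates yields $y_{i,i} + y_{i,i+1} \equiv k_i r/q$ and $2 y_{i,j} \equiv k_i r/q \pmod r$ for $j \notin \{i, i+1\}$; in particular the outside entries $y_{i,j}$ take values in a set of size at most two. Second, the commutation $s_i s_j = s_j s_i$ for $|i-j| \geq 2$ translates, when $n \geq 5$, to the requirement that $y_i$ be constant on each of the intervals $\{1,\ldots,i-1\}$ and $\{i+2,\ldots,n\}$; call the constants $b_i^L$ and $b_i^R$. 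The crucial degeneracy occurs when $n=4$: the single commutation $s_1 s_3 = s_3 s_1$ uses all four coordinates, so one only gets that $y_{3,1}-y_{3,2}$ and $y_{1,3}-y_{1,4}$ are equal and both of the form $k r/q$ for some $k$ with $2k \equiv 0 \pmod q$. Precisely the branch $k = q/2$ (possible only when $q$ is even, which matches hypothesis (i) of Proposition \ref{speaut4}) is the source of the exceptional case (ii): composing with $\phi$ shifts $y_i$ by $(r/2) e_{i+2}$ and so kills this additional twist, returning us to the ``normal'' branch $k \equiv 0 \pmod q$. For $n = 3$ there are no commutation relations, and the remaining analysis in the next step goes through directly.

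Third, a coordinate-by-coordinate analysis of the braid relation $s_i s_{i+1} s_i = s_{i+1} s_i s_{i+1}$ pins down the remaining freedom. Expanding both sides shows that the differences $b_i^R - b_{i+1}^L$, $b_i^L - b_{i+1}^L$, and $b_i^R - b_{i+1}^R$ are all equal to a single scalar $k^{(i)} r/q$, forcing $b_i^L = b_i^R =: m_i$; a further comparison of coordinates $i+1$ in the braid identity shows $k_i - k_{i+1} \equiv 2 k^{(i)} \pmod q$, and chaining these relations across $i$ makes all the $m_i$ coincide up to a global adjustment absorbed by $z$. Setting $z = c^m$ with $m$ equal to the common value of the $m_i$, the condition $2m \equiv k r/q \pmod r$ inherited from $s_i^2 = 1$ guarantees $z^2 = c^{2m} \in \langle c^{r/q}\rangle$, i.e.\ $z^2 = 1$ in $G(r,p,q,n)$. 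Subtracting off the scalar leaves a vector whose only nonzero entries are $a_i$ at position $i$ and $-a_i$ at position $i+1$, which is exactly the asserted form.

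The main obstacle will be the careful bookkeeping in the braid step: keeping track of the scalars $k_i$ and $k^{(i)}$ modulo $q$ and threading them so as to conclude both that the $m_i$'s collapse to a single $m$ and that the residual ambiguity modulo $r/2$ in the outside entries either vanishes (case (i)) or is precisely the shift $(r/2) e_{i+2 \bmod 4}$ introduced by $\phi$ (case (ii)), verifying along the way that the hypotheses of Proposition \ref{speaut4} are exactly what permit this shift to be realized by an automorphism.
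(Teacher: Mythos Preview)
Your approach is workable but differs from the paper's in one essential respect: the paper never touches the braid relations. Instead, it observes at the outset that the simple reflections $s_1,\dots,s_{n-1}$ are all conjugate in $G$, and since $\nu$ (and $\phi$) preserve the projection to $S_n$, once the statement is established for $i=1$ with a particular $z$ and $\phi$, the same $z$ and $\phi$ automatically work for every $i$ by conjugating. Concretely, if $\phi\circ\nu(s_1)=z(s_1,a_1e_1-a_1e_2)$ and $s_i=\sigma s_1\sigma^{-1}$ with $\sigma\in S_n$, then $\phi\circ\nu(s_i)$ is the conjugate of $z(s_1,a_1e_1-a_1e_2)$ by an element with projection $\sigma$, which has the form $z(s_i,a_ie_i-a_ie_{i+1})$ since $z$ is central. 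This reduces the entire problem to analysing $\nu(s_1)$ alone, using only the relations $(\nu(s_j)\nu(s_1))^2=1$ for $j\geq 3$ and $\nu(s_1)^2=1$. Your route of treating all $i$ simultaneously and invoking the braid relations to synchronise the outside constants $m_i$ does work (your coordinate computation showing $b_i^L=b_i^R$ and $m_i\equiv m_{i+1}\pmod{r/q}$ is correct), but it is considerably longer and the conjugacy shortcut is worth knowing.

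One genuine gap in your sketch: in the exceptional $n=4$ branch you verify only hypothesis (i) of Proposition~\ref{speaut4} (that $q$ is even). You still need to check hypothesis (ii): if $r/p$ is odd then $r/q$ is even and $\tfrac{4r}{pq}$ is odd. This comes from the requirement that $\nu(s_1)$ actually lie in $G(r,p,q,4)$, which forces $\Delta$ of its diagonal part to be divisible by $p$; unwinding this in the two sub-cases ($k=0$ versus $k=1$ in the paper's notation, corresponding to whether the common outside value is $0$ or $\tfrac{r}{2q}$) yields exactly these parity constraints. Without this check you cannot invoke $\phi$, since Proposition~\ref{speaut4} does not guarantee its existence otherwise.
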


\begin{proof} Because the elements $s_1,s_2,\dots,s_{n-1}$ are all conjugate in $G$, it is enough to show that the lemma holds for $i=1$.   We write $\nu(s_i)=( s_i,x_{i,1}e_1 +\dots +x_{i,n} e_n)$ and proceed as follows.

 For each $j \in \{3,4,\dots,n-1\}$, we have $\left(\nu(s_j)\nu(s_1)\right)^2 = 1$ which means that
   \[  (x_{j,1} - x_{j,2})(e_1 -e_2) + (x_{1,j+1} - x_{1,j})(e_j -e_{j+1}) \in \ZZ_r\spanning\left\{ \tfrac{r}{q} (e_1+\dots + e_n)\right\}.\]
If $n\geq 5$ then this containment implies  $x_{1,j} \equiv x_{1,j+1} \modu r)$ for each $2 <j<n$ 
since then
$\{1,2,j,j+1\} \neq \{1,2,\dots,n\}$. If $n=4$, alternatively, then we are only able to deduce that $a \eqdef x_{1,3}-x_{1,4}$ is a multiple of $r/q$ and that  $2a$ is multiple of $r$.
These observations show that for any $n\geq 3$ one of the following holds:
   \begin{enumerate}
   \item[(a)] $x_{1,3}=x_{1,4}=\dots = x_{1,n}$;
   \item[(b)] $x_{1,3}=x_{1,4} + \frac{r}{2}$ and $n=4$ and $q$ is even.
\end{enumerate}
   In either case,  since $s_1^2=1$, there exists an integer $k$ such that $x_{1,1}+x_{1,2}\equiv 2x_{1,3}\equiv k\frac{r}{q} \modu r)$.  We may assume that $x_{1,3}\in\{0,1,\ldots,\frac{r}{q}-1\}$, and so either
   \begin{itemize}
   \item $k=0$ and $x_{1,1}+x_{1,2}=0$ and $x_{1,3}=0$;
   \item $k=1$ and $x_{1,1}+x_{1,2}=\frac{r}{q}$ and $x_{1,3}=\frac{r}{2q}$ and $\frac{r}{q}$ is even.
   \end{itemize}
   If we are in case (a), then $k=0$ implies that $\nu(s_1) = (s_1,a_1e_1-a_1e_2)$ for $a_1 = x_{1,1}$, while $k=1$ implies that $\nu(s_1) = z(s_1,a_1e_1-a_1e_2)$ for $a_1 = x_{1,1}-\frac{r}{2q}$ and $z = c^{\frac{r}{2q}}$.    
   
   On the other hand, suppose we are in case (b). Then $k=0$ implies that $\frac{r}{p}$ is even (since in this case $\frac{r}{2}$  must be a multiple of $p$) 
   while $k=1$ implies that $\frac{2r}{q} + \frac{r}{2}$ is a multiple of $p$ (as otherwise $\nu(s_1)$ would not belong to $G$), in which case $\frac{4r}{pq} + \frac{r}{p}$ is even and $\frac{4r}{pq}$ and $\frac{r}{p}$ have the same parity.
   In either situation $(r,p,q)$ satisfy the hypotheses of Proposition \ref{speaut4} so we may define $\phi$ by \eqref{phi}.
   One then checks that if $k=0$ then 
    $\phi\circ \nu(s_1) = (s_1,a_1e_1-a_1 e_2)$ for $a_1 = x_{1,1} + \frac{r}{2}$ while if $k=1$ then 
   $\phi\circ \nu(s_1) = z(s_1,a_1e_1-a_1 e_2)$
   for $a_1 = x_{1,1} + \frac{r}{2} - \frac{r}{2q}$ and $z = c^{\frac{r}{2q}}$.
   
In all 
 cases $\nu(s_1)$ has the desired form, which suffices to prove the lemma.
   \end{proof}

Our final result in this section gives an explicit  form  for all automorphisms of $G=G(r,p,q,n)$ which preserve the normal subgroup $N=N(r,p,q,n)$. 

\begin{thm}\label{vN=N}
Let $n \geq 3$ and assume $(r,p,q,n) \neq (1,1,1,6)$. 
  If 
$\nu\in \Aut(G)$  is an automorphism such that $\nu(N) = N$, then
\[ \nu=\Ad(g)\circ \phi \circ \alpha_{j,k,z}\]
for some $g \in G(r,1,q,n)$ and  some $j,k,z$  as in Proposition \ref{alpha-thm}, where either
\begin{enumerate}
\item[(i)] $\phi$ is the identity automorphism;
\item[(ii)] $\phi$ is defined as in Equation \eqref{phi}, in the case that $n=4$ and  $(r,p,q)$ satisfy  the hypotheses of Proposition \ref{speaut4}.
\end{enumerate}
\end{thm}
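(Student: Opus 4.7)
The plan is to absorb $\nu$ step-by-step into the prescribed factors $\Ad(g)$, $\phi$, and $\alpha_{j,k,z}$, whittling it down until the residual automorphism must equal $\alpha_{j,k,z}$. First, Lemma \ref{Nchar} applies (thanks to the assumption $(r,p,q,n) \neq (1,1,1,6)$) to provide $\sigma \in S_n$ with $\bar\nu = \Ad(\sigma)$; setting $g_0 \eqdef (\sigma, 0) \in G(r,1,q,n)$, the composition $\nu_1 \eqdef \Ad(g_0)^{-1} \circ \nu$ preserves both $N$ and the projection to $S_n$. Lemma \ref{prepro} then produces $\phi$ (either the identity or the involution of Proposition \ref{speaut4}), a central element $z \in C(r,p,q,n)$ with $z^2=1$, and integers $a_1,\ldots,a_{n-1}$ such that $\nu_2 \eqdef \phi \circ \nu_1$ satisfies $\nu_2(s_i) = z\cdot(s_i, a_i(e_i - e_{i+1}))$ for every $i \in [n-1]$.

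Next, I would study $\nu_2$ on $N$.  Since $\bar\nu_2 = \mathrm{id}$, the map $\nu_2|_N$ commutes with the permutation action of $S_n$, and a standard representation-theoretic computation (valid for $n \geq 3$) shows every such $S_n$-equivariant endomorphism of $(\ZZ_r)^n$ is of the form $x \mapsto jx + k'\Delta(x)v$, where $v = e_1 + \cdots + e_n$ and $j, k' \in \ZZ$.  Evaluating at $s = (1, e_1 - e_2)$ identifies $j$ as the same scalar appearing in $\nu_2(s)$; setting $k \eqdef d_0 k'$, one verifies directly from the formula of Proposition \ref{alpha-thm}(1) that $\nu_2|_N$ coincides with $\alpha_{j,k,z}|_N$.

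To align the action on simple reflections, solve the system $y_{i+1} - y_i = a_i - j$ recursively from $y_1 = 0$ to produce $y \in (\ZZ_r)^n$, and set $g_1 \eqdef (1, y) \in G(r,1,q,n)$.  A direct computation using $\Ad((1,y))((s_i, u)) = (s_i, u + (y_{i+1} - y_i)(e_i - e_{i+1}))$ shows that $\Ad(g_1) \circ \alpha_{j,k,z}$ agrees with $\nu_2$ on every $s_i$, and these automorphisms also agree on $N$ because $\Ad(g_1)$ fixes $N$ pointwise.  Since $N$ together with $\{s_1,\ldots,s_{n-1}\}$ generates $G$, we conclude $\nu_2 = \Ad(g_1) \circ \alpha_{j,k,z}$.

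Finally, because $\phi|_N = \mathrm{id}$ and $g_1 \in N$, $\phi$ commutes with $\Ad(g_1)$; together with $\phi^{-1} = \phi$, this yields
\[
\nu = \Ad(g_0) \circ \phi \circ \nu_2 = \Ad(g_0)\circ\phi\circ\Ad(g_1)\circ\alpha_{j,k,z} = \Ad(g_0 g_1)\circ\phi\circ\alpha_{j,k,z},
\]
so $g \eqdef g_0 g_1 \in G(r,1,q,n)$ completes the factorization.  The congruences \eqref{jrd0} hold automatically by Proposition \ref{alpha-thm}(2), since $\alpha_{j,k,z}$ is an automorphism as a composition of automorphisms.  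The most delicate step is the third one: a priori the $a_i$ from the simple-reflection analysis could all differ, whereas the $N$-analysis pins down a single scalar $j$, and one must argue that conjugation by an arbitrary $(1,y)\in G(r,1,q,n)$ (which need not lie in $G$ itself) supplies exactly the freedom required to reconcile them.
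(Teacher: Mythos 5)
Your overall skeleton (reduce via Lemma \ref{Nchar}, then Lemma \ref{prepro}, then conjugate by a diagonal element of $G(r,1,q,n)$ to normalize the images of the $s_i$, then identify the residue with some $\alpha_{j,k,z}$) is exactly the paper's, and your first and last steps are fine (modulo a small slip: since $\alpha_{j,k,z}(s_i)=zs_i$ has no dependence on $j$, the recursion should be $y_{i+1}-y_i\equiv a_i$, not $a_i-j$). The genuine gap is in your middle step. You classify $S_n$-equivariant endomorphisms of the permutation module $(\ZZ_r)^n$, but the module you must analyze is $N=N(r,p,q,n)=\{x\in(\ZZ_r)^n: p\mid\Delta(x)\}/\langle \tfrac{r}{q}(e_1+\cdots+e_n)\rangle$, which equals $(\ZZ_r)^n$ only when $p=q=1$. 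For the actual subquotient $N$ your asserted normal form $x\mapsto jx+k'\Delta(x)v$ with $k'\in\ZZ$ is simply false: for example $\alpha_{1,1,1}$ is an automorphism of $G(4,2,2,4)$ (condition \eqref{jrd0} holds), and its restriction to $N$ is $x\mapsto x+\tfrac{\Delta(x)}{2}v$, which cannot be written as $jx+k'\Delta(x)v$ with $k'$ an integer (test it on $x=2e_1$). So the step ``$\nu_2|_N$ coincides with $\alpha_{j,k,z}|_N$ with $k=d_0k'$'' does not follow from the computation you cite, and in fact, run literally, your argument would only produce automorphisms $\alpha_{j,k,z}$ with $d_0\mid k$, missing genuine cases.

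Moreover, even the corrected classification for $N$ is not a one-line equivariance computation: because $N$ is a quotient by a central cyclic group, the stabilizer computations only determine images up to order-two elements of $C(r,p,q,n)$ (this is exactly the source of the possible factors $z'$, $z''$ in the paper's analysis of $\mu(s)$ and $\mu(t^p)$), and one must separately rule these out and prove that the twist exponent on $t^p$ is a multiple of $p/d_0$ using the constraints ``$nk\equiv 0\ (\mathrm{mod}\ p)$'' and ``$rk/p\equiv 0\ (\mathrm{mod}\ r/q)$''. These points can be handled purely inside $N$ (e.g.\ the relation $(e_1-e_2)+(e_2-e_3)=e_1-e_3$ together with equivariance kills the order-two twist on $s$, and the well-definedness of the image of $\overline{pe_1}$ forces the $p/d_0$ divisibility), which would give a legitimate variant of the paper's argument; but as written, invoking the ``standard'' computation for $(\ZZ_r)^n$ skips precisely the part of the proof where the real content lies, namely the congruences created by the subgroup condition $p\mid\Delta(x)$ and the central quotient by $\langle c^{r/q}\rangle$.
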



\begin{remark} In fact, every automorphism of $G(r,p,q,n)$ has the form given in this theorem, provided $(r,p,q,n)$ is not $(1,1,1,6)$ or one of the twelve exceptions in Proposition \ref{characteristic-prop}. \end{remark}

In proving this theorem, it will be useful to note that if $j,k,z$ are as in Proposition \ref{alpha-thm} then the images of the generators $s_1,\dots,s_{n-1},s,t^p \in G(r,p,q,n)$ under the automorphism $\alpha_{j,k,z}$ are as follows:
  \begin{equation}\label{images}
   \alpha_{j,k,z}(s) = s^j,
\qquad
\alpha_{j,k,z}(t^p) = c^{kp/d_0}\cdot  t^{jp},
\qquad\text{and}
\qquad
\alpha_{j,k,z}(s_i) = z s_i.
\end{equation}

\begin{proof}
We  mimic the proof of \cite[Lemma 4.4]{Ma}.  
Since $\nu(N) = N$, Lemma \ref{Nchar} implies the automorphism $\bar \nu:S_n\rightarrow S_n$ is inner. Hence there exists $w\in S_n$ such that $\Ad(w^{-1})\circ \nu$ preserves the projection to $S_n$, so by   Lemma \ref{prepro} we have 
\[ \nu = \Ad(w) \circ \phi \circ \nu'\] where $\phi$ is either the identity or the automorphism in Proposition \ref{speaut4} and where $\nu' \in \Aut(G)$ satisfies
\[ \nu'(s_i)=z\( s_i,a_i e_i - a_i e_{i+1}\),\qquad\text{for all }i \in [n-1],\]
for some $z\in C(r,p,q,n)$ with $z^2=1$ and some integers $a_1,a_2,\dots,a_{n-1}$.

Now, as in the proof of \cite[Lemma 4.4]{Ma}, one can check that for the element 
\[
\begin{array}{c} 
y = \( \sum_{i=1}^n \sum_{j=i}^n a_j e_i, 1\) \in G(r,1,q,n)
\end{array}
\]
one has $y^{-1} \cdot \nu'(s_i) \cdot y = z s_i$ for each $i \in \{1,2,\dots,n-1\}$.  Therefore, if we let $\mu = \Ad(y^{-1}) \circ \nu' \in \Aut(G)$ then 
\[\nu = \Ad(w) \circ \phi \circ \Ad(y) \circ \mu  = \Ad\( w\cdot \phi(y) \) \circ \phi \circ \mu\] and $\mu(s_i) = zs_i$ for all $i$.
To complete our proof it suffices to show that $\mu = \alpha_{j,k,z}$ for some integers $j,k$.  Since $\mu$ already agrees with $\alpha_{j,k,z}$ on $s_1,\dots,s_{n-1}$, we need only  show that the images of the remaining generators $s$ and $t^p$ under $\mu$ have the same form as \eqref{images}.

To this end, first note that since $N$ is normal in $G$ and $\nu(N) = \phi(N) =  N$, we have $\mu(N) = N$. For some integers $x_i$ we may therefore write $\mu(s) = (1,x_1e _1 + \dots + x_n e_n)$. It is straightforward to work out that since $s_1ss_1 = s^{-1}$ 
and 
 $s_js s_j = s$ for each $j \in \{3,4,\dots,n-1\}$, 
we have 
 \[  x_1 +x_2 \equiv 2x_3 \equiv k \tfrac{r}{q} \modu r) \qquad\text{and}\qquad x_3\equiv x_4 \equiv \dots \equiv x_n \modu r)\]
 for some integer $k$.
It follows that 
%
%
\[ \mu(s) = z's^j,\quad\text{for some $z' \in C(r,p,q,n)$ with $(z')^2=1$ and some integer $j$}.\]
In particular, one can take $j = x_1-x_3$ and $z' = c^{x_3}$.
Applying $\mu$ to both sides of the identity $s_1s =  (s_2ss_2)^{-1}\cdot  s_1\cdot (s_2ss_2) $ shows  that in fact we must have $z' = 1$.

In a similar way, since $s_j t^p s_j =t^p$ for each $j\in \{2,3,\dots,n-1\}$, 
we must have
\[ \mu(t^p) = z''(t^{p})^{j'},\quad\text{for some $z'' \in C(r,p,q,n)$ and some integer $j'$}.\]
Applying $\mu$ to both sides of  $s^p = t^p \cdot s_1\cdot  t^{-p} \cdot s_1$
shows
that
$s^{jp} =   s^{j'p}$. Since $n\geq 3$ it follows that $j-j'$ is a multiple of $r/p$, so $t^{jp} = t^{j'p}$ and we may  assume $j=j'$. 
On the other hand, since $t^p$ has order $r/p$ in $G$, we must have  $z'' = c^k$ for some integer $k$ such that 
\begin{itemize}
\item $nk$ is a multiple of $p$ (this is equivalent to  $z'' \cdot t^{jp}\in G$);  
\item $rk/p$ is a multiple of $r/q$ (this is equivalent to $(z'')^{r/p} = 1$). 
\end{itemize}
These conditions imply that $k$ is a multiple of both ${p}/{\gcd(p,n)}$ and  
${p}/{\gcd(p,q)}$, which   means precisely that $k$ is a multiple of $p/d_0$ where $d_0 = \gcd(p,q,n)$.

We have thus shown that there are integers $j,k$ with $k$ a multiple of $p/d_0$ such that $\mu(s) = s^j$ and $\mu(t^p) = c^k \cdot t^{jp}$. 
Hence the images of $s$ and $t^p$ under $\mu$   
have  the form  (\ref{images}) which is what we needed to prove. 
\end{proof}

\section{Applications}

In this penultimate section we employ the preceding results to prove the remaining parts of Theorem \ref{thm3} in the introduction.
As usual we let $r,p,q,n$ be positive integers with $p$ and $q$ dividing $r$ and $pq$ dividing $rn$.
We also continue to  write $\tau$ for our standard automorphism $(\pi,x) \mapsto (\pi,-x)$ of $G(r,p,q,n)$. Observe that in terms of the automorphisms $\alpha_{j,k,z}$ in Proposition \ref{alpha-thm}, we have
$ \tau = \alpha_{-1,0,1}$ and $1 = \alpha_{1,0,1}$. 

\begin{lem}\label{vconginv}
  Let $n \geq 3$ and write $G=\qc$. Fix $\epsilon \in \{ -1,1\}$ and suppose $\nu \in \Aut(G)$ such that  
  the elements $\nu(h)$ and $h^{\epsilon}$ are conjugate in $G$ for all $h\in G$.
  \begin{enumerate}
  \item[(1)] For some $g \in G(r,1,q,n)$ and some $\alpha \in \{1,\tau\}$, we have $\nu = \Ad(g)\circ \alpha$.
  \item[(2)] If $\epsilon = 1$ and $(r,p,q,n)$ is  not one of the two exceptions \[\text{$(4,2,4,4)$ or $(4,4,4,4)$,}\] then $\alpha=1$.
  
  \item[(3)] If $\epsilon = -1$ and  $(r,p,q,n)$ is not one of the six exceptions \[\ba&\text{$(3,3,3,3)$ or $(6,3,3,3)$ or $(6,3,6,3)$ or $(6,6,3,3)$ or}\\&\text{$(4,2,4,4)$ or $(4,4,4,4)$,}\ea\] then $\alpha=\tau$.

  \end{enumerate}

\end{lem}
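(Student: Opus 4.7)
The plan is to apply Theorem \ref{vN=N} to put $\nu$ in a standard form, and then use the hypothesis on specific test elements to pin down the resulting parameters.

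First I would verify that $\nu(N) = N$: any $h \in N = N(r,p,q,n)$ has $|h| = 1$, so both $h^\epsilon$ and every $G$-conjugate of $h^\epsilon$ lie in the normal subgroup $N$; the hypothesis therefore forces $\nu(h) \in N$, and applying the same reasoning to $\nu^{-1}$ gives $\nu(N) = N$. The degenerate case $(r,p,q,n) = (1,1,1,6)$ is handled by hand: in $S_6$ the nontrivial outer automorphism sends a transposition to a product of three disjoint transpositions, which violates the hypothesis, so $\nu$ must be inner and the conclusion holds with $\alpha = 1 = \tau$. Otherwise Theorem \ref{vN=N} writes $\nu = \Ad(g_0) \circ \phi \circ \alpha_{j,k,z}$, where $\phi$ is either the identity or the exceptional $n=4$ automorphism of Proposition \ref{speaut4}.

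Next I would test the hypothesis on a handful of generators, using the explicit formulas \eqref{images} and the conjugacy criterion of Proposition \ref{grpnconjugacy}. Applying $\nu$ to $s = (1, e_1 - e_2)$, the image has colored-cycle data consisting of two length-one cycles of colors $\pm j$, which must match the colored-cycle data of $s^\epsilon$, forcing $j \equiv \pm 1 \modu r)$. Applying $\nu$ to $t^p = (1, pe_1)$, the scalar factor $c^{kp/d_0}$ would contribute $n$ nontrivial length-one cycles unless it is trivial in $G$, so $c^{kp/d_0} = 1$, and matching the surviving color gives $j \equiv \epsilon \modu r/p)$. Applying $\nu$ to $s_1$, the scalar $z$ must likewise be trivial in $G$, since otherwise it would add cycles at positions $3,\dots,n$ absent from $s_1^\epsilon = s_1$. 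Absorbing any residual central elements into $\Ad(g_0)$ then leaves $\alpha_{j,k,z} \in \{\alpha_{1,0,1},\, \alpha_{-1,0,1}\} = \{1, \tau\}$ in the case $\phi = 1$.

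To finish the proof, one disposes of the exceptional $\phi$ and matches the sign of $j$ with $\epsilon$. When $\phi$ is the exceptional automorphism of Proposition \ref{speaut4} (so $n=4$ and the divisibility hypotheses there hold), I would apply $\nu$ to the commuting involution product $s_1 s_3$: the extra factor $t_1 t_3 = \phi(s_1 s_3)\cdot (s_1 s_3)^{-1}$ produced by $\phi$ must be realizable as inner conjugation up to central adjustment, and a splitting-index analysis in the spirit of Lemma \ref{new4.3} shows that this happens only for the tuples $(4,2,4,4)$ and $(4,4,4,4)$. For the sign of $j$, the joint constraints $j \equiv \pm 1 \modu r)$ and $j \equiv \epsilon \modu r/p)$ force $j = \epsilon$ whenever $r/p > 2$; in the remaining range $r/p \le 2$ one must rule out $j = -\epsilon$ by a finer test on the long cycle $(1,2,\dots,n)^{-1}$, and this rules out $\alpha \neq \tau$ outside the four further tuples $(3,3,3,3), (6,3,3,3), (6,3,6,3), (6,6,3,3)$ appearing only in (3), where $\tau$ coincides with an inner automorphism and the alternatives $\alpha = 1$ and $\alpha = \tau$ become indistinguishable. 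The main obstacle is this final case analysis: verifying that each listed tuple genuinely admits a non-standard decomposition and that no others do, which amounts to careful divisibility bookkeeping combined with the splitting-index criterion of Proposition \ref{grpnconjugacy}.
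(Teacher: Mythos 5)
Your reduction for part (1) is essentially the paper's: you check $\nu(N)=N$, dispose of $(1,1,1,6)$, invoke Theorem \ref{vN=N}, and then force $j\equiv\pm1\pmod r$, $z=1$ and the triviality of $c^{kp/d_0}$ by testing $s$, a simple transposition, and $t^p$. Two small caveats there: you should first normalize $g$ to $t^i$ modulo an inner automorphism (otherwise $\nu$ does not literally agree with $\alpha_{j,k,z}$ on your test elements, and $\mathrm{Ad}(g)$ for $g\notin G$ need not preserve $G$-classes), and the step ``colored-cycle data forces $j\equiv\pm1$'' needs the extra check the paper makes for $n\in\{3,4\}$, where the coset ambiguity by powers of $c^{r/q}$ is not automatically killed. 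Your observation that the $t^p$ test also yields $j\equiv\epsilon\pmod{r/p}$, settling the sign at once when $r/p>2$, is a genuine (and correct) shortcut the paper does not use.

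The final paragraph, however, contains two real errors. First, the exceptions $(4,2,4,4)$ and $(4,4,4,4)$ have nothing to do with the exceptional automorphism $\phi$ of Proposition \ref{speaut4}: under the hypothesis $\phi$ can never occur, for any $(r,p,q,4)$. After normalizing $g=t^i$ one gets $\nu(s_3)=c^{j}t^{r/2}s_3$, which is never conjugate to $s_3=s_3^{\pm1}$ (compare the first two coordinates); moreover Proposition \ref{speaut4} does not even apply to $(4,4,4,4)$, where $r/p$ and $r/q$ are both odd. Note also that if $\phi$ really could survive for $(4,2,4,4)$, your argument would leave part (1) --- which admits no exceptions --- unproved there. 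Second, your ``finer test on the long cycle $(1,2,\dots,n)^{-1}$'' cannot detect the sign of $j$: since $z=1$, the automorphism $\alpha_{j,0,1}$ fixes $S_n$ pointwise, so on a pure permutation $\nu$ only probes $\mathrm{Ad}(t^i)$ (the issue of Lemma \ref{new4.3}), not whether $\alpha$ is $1$ or $\tau$. The range $r/p\le 2$ still contains infinitely many tuples, and what actually settles it is that a wrong-sign $\alpha$ forces every element of $N$ to be conjugate to its inverse in $G$; testing the diagonal element $(1,e_1-2e_2+e_3)$ (using $r\ge3$) cuts this to a short finite list ($n=3$, $r\in\{3,6\}$; $n=4$, $r=4$; $n=5$, $r=q=5$; $n=6$, $r=q=3$), which is then resolved case by case (in the paper, by computer). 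That same analysis is what produces $(4,2,4,4)$ and $(4,4,4,4)$ as exceptions to \emph{both} (2) and (3) --- there $\mathrm{Ad}(t^i)\circ\tau$ can be class-preserving --- contrary to your bookkeeping; and your parenthetical claim that $\tau$ is inner for the four $n=3$ tuples is false (an automorphism fixing $S_3$ pointwise and inverting $N$ is not conjugation by any element of $G(r,1,q,3)$), and if it were true those tuples would not be exceptions to (3) at all.
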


\begin{proof} 
We may assume $(r,p,q,n) $ is not $ (1,1,1,6)$, since the lemma holds in this case as every conjugacy class-preserving automorphism of $S_6$ is inner. Since $N$ is a normal subgroup, we must have $\nu(N) = N$, so
by Theorem \ref{vN=N} we know that $\nu = \Ad(g) \circ \phi \circ \alpha_{j,k,z}$ for some $g \in G(r,1,q,n)$ and some $j,k,z$ as in Proposition \ref{alpha-thm}, where $\phi$ is either trivial or the automorphism in Proposition \ref{speaut4}.
After  composing $\nu$ with an inner automorphism,
we  may further assume $g=t^i$ for an integer $i$.

Our next reduction is to note that even if $n=4$ and the conditions of Proposition \ref{speaut4} hold, then we still must have $\phi = 1$. For if $\phi$ is the automorphism in Proposition \ref{speaut4} 
then our hypothesis that $\nu(h)$ and $h^{\epsilon}$ are conjugate for all $h \in G$ fails for the element $h=s_3 \in S_4 \subset G$.  In detail, if $\phi \neq 1$ then, since we assume $g=t^i$, we have $\nu(s_3) = c^j \cdot  t^{r/2} \cdot s_3$ for some integer $j$, which is never conjugate to $(s_3)^{-1} = s_3$.
 
Letting $\alpha = \alpha_{j,k,z}$, we thus have $\nu = \Ad(t^i)\circ \alpha$.
Our next step is to show that $\alpha$ is either the identity automorphism or $\tau$. To this end we note that 
 $\nu$ coincides with $\alpha$ on $N$ since $\Ad(t^i)$ fixes $N$ pointwise,
so we have $\nu(s) = s^j$.
The elements $s$ and $s^{-1}$ are conjugate, and their conjugacy class in $G$ consists of all elements equal to $(e_{i_1} - e_{i_2},1)$ for two distinct integers $i_1,i_2\in \{1,2,\dots,n\}$. Therefore, for some  $i_1\neq i_2$ we must have 
\begin{equation}
\label{j=1}
 je_1 -je_2 -e_{i_1} + e_{i_2} \in \ZZ_r\spanning\left\{ \tfrac{r}{q} (e_1+e_2+\dots + e_n ) \right\}.
 \end{equation}
When $n\geq 5$ this is clearly only possible if the left expression is zero, which can occur only if $j \equiv \pm 1 \modu r)$.
On the other hand, one can check that if $n \in \{3,4\}$ then 
the given containment still holds only if $j \equiv \pm 1\modu r)$,
so $j \equiv \pm 1 \modu r)$
in all cases.

Similarly, we observe that $\nu (s_2) = \alpha(s_2) = z s_2$; recall that $z$ is an element of $C(r,p,q,n)$ with $z^2=1$.  The conjugacy class of $s_2^{-1} = s_2$ in $G$ consists of all elements of the form $\( (i_1, i_2), x e_{i_1} - x e_{i_2} \)$ where $x \in \ZZ_r$ and $i_1,i_2 \in \{1,2,\dots,n\}$ are distinct.
We can have $zs_2 = \((i_1, i_2),x e_{i_1} - x e_{i_2} \)$ for some such $x,i_1,i_2$   only if $x e_{i_1} - x e_{i_2}$ belongs to the right hand side of \eqref{j=1}, which is possible since $n\geq 3$ only if $x =0 $ and $z=1$.

Finally, we claim that we may assume $k =0$. If $p = r$ then we always have $\alpha_{j,k,z} = \alpha_{j,0,z}$ since $G$ is generated by $s$ and $s_1,\dots, s_{n-1}$ and the images of these generators under $\alpha$ have no dependence on $k$. On the other hand, if $p<r$ then 
\[\nu(t^p) = \(1, \tfrac{kp}{d_0}( 1,e_1+e_2 + \dots +e_n) + jp e_1\)\quad\text{where $d_0 = \gcd(p,q,n)$,}\] while the conjugacy class of $(t^p)^{\pm 1}$ in $G$ consists of all elements equal to $(1,\pm  pe_i)$ for some $i \in \{1,2,\dots,n\}$.
It is evident that an element of the form $(1,\pm pe_i)$ can only be equal to  $\nu(t^p)$ in $G$
if
$\frac{kp}{d_0}$ is a multiple of $\frac{r}{q}$, and in this case we again have $\alpha_{j,k,z} = \alpha_{j,0,z}$, so our claim follows.

We have thus  shown that either $\alpha = \alpha_{1,0,1} = 1$ or $\alpha = \alpha_{-1,0,1} = \tau$, which proves part (a).  
To prove part (2), we should assume $\epsilon = 1$ but $\alpha =\tau \neq 1$, 
and argue that $(r,p,q,n)$ is one of the two listed exceptions.
Likewise, to prove part (3), we should assume $\epsilon = -1$ but $\alpha = 1\neq \tau$, and argue that $(r,p,q,n)$ is  one of the six listed exceptions.
In the case that either of these pairs of assumptions holds, 
then necessarily $r \geq 3$, as otherwise $\tau = 1$, and we  have  $\nu(h) = h^{-\epsilon}$ for all $h \in N$, so every element of the diagonal subgroup $N = N(r,p,q,n)$ must be conjugate to its inverse in $G$.  We now show that these conditions hold only for a small number of quadruples $(r,p,q,n)$. Given this short of list of possibilities, it is a straightforward calculation (which we have carried out in {\sc{Magma}}) to check that only in the exceptional cases listed in parts (2) and (3) are $\Ad(t^i) \circ \tau(h)$ and $h$  always conjugate or $\Ad(t^i)(h)$ and $h^{-1}$ always conjugate. 

To this end, assume $r \geq 3$ and that each $h \in N$ is conjugate to $h^{-1}$ in $G$.  
Consider  the element $h = (1,e_1-2e_2 +e_3) \in N$. 
The conjugacy class of $h^{-1}$ in $G$ consists of all elements of the form $(1,-e_{i_1}+2e_{i_2} -e_{i_3})$ where $i_1,i_2,i_3$ are distinct elements of $\{1,2,\dots,n\}$. As $h$ is conjugate to $h^{-1}$ in $G$, for some choice of distinct indices $i_1,i_2,i_3$ we must have 
\begin{equation}\label{e} e_1 -2e_2 +e_3 +e_{i_1} -2 e_{i_2} + e_{i_3} \in \ZZ_r\spanning\left\{ \tfrac{r}{q} (e_1+e_2+\dots + e_n ) \right\}.\end{equation}
Since $r \geq 3$,   this containment is  impossible for $n>6$.
When $n \in \{3,4,5,6\}$, it is a routine but tedious exercise to determine, from  the finite list of  expressions which can occur as the right hand side of \eqref{e}, which values of $r,p,q$ allow \eqref{e} to hold. In particular, one finds that \eqref{e} holds only in the following cases:
\begin{itemize}
\item $n=3$ and $r \in \{3,6\}$ and $r/q\in \{1,2\}$. 
\item $n=4$ and $r=4$ and $q \in \{2,4\}$.
\item $n=5$ and $r=q=5$.
\item $n=6$ and $r=q=3$.
\end{itemize}
The exceptions given in parts (2) and (3) be one of these cases. As mentioned above, determining precisely which exceptions apply in each case is then a finite calculation, which is straightforward to carry out in a computer algebra system.
\end{proof}

Marin and Michel  \cite[Proposition 3.1]{MM} prove that the complex reflection groups $G(r,p,n)$ have no class-preserving outer automorphisms. (This is equivalent to the statement the the outer automorphism group of $G(r,p,n)$ acts faithfully on $\Irr(G(r,p,n))$, which is what the cited proposition in fact asserts.)
As is clear from Lemma \ref{5.1sub}, 
this property  significantly   simplifies the problem of classifying which finite complete reflection groups have GIMs. 

By contrast, the groups $G(r,p,q,n)$ can have class-preserving outer automorphisms:
the automorphism $\Ad(t^2)$ of $G(4,4,4,4)$ provides one example.
We can show, however, that   such automorphisms  only exist in a limited number of cases, and are almost always nearly inner.

\begin{prop} \label{no-class-prop}
The group $G(r,p,q,n)$ possesses a class-preserving outer automorphism
 only if  $(r,p,q,n) = (4,2,4,4)$ or if $n>2$ and 
$r\equiv p\equiv q\equiv n\equiv 2^i \modu 2^{i+1})$ for an integer $i>0$.
In these cases, provided $(r,p,q,n)$ is not $(4,2,4,4)$ or $(4,4,4,4)$, all class-preserving outer automorphisms are induced by inner automorphisms of $G(r,1,q,n)$.
 \end{prop}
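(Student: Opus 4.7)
The plan is to derive this proposition as a straightforward assembly of the earlier lemmas. If $n=2$, then Lemma \ref{no-class-lem} already asserts that $G(r,p,q,2)$ admits no class-preserving outer automorphisms, so nothing remains to prove in that case. I would therefore assume $n \geq 3$ and let $\nu \in \Aut(G)$ be any class-preserving automorphism, meaning $\nu(h)$ is conjugate to $h$ for every $h \in G$.

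Applying Lemma \ref{vconginv}(1) with $\epsilon = 1$, I would obtain $g \in G(r,1,q,n)$ and $\alpha \in \{1,\tau\}$ with $\nu = \Ad(g) \circ \alpha$. By part (2) of the same lemma, outside the two quadruples $(r,p,q,n) \in \{(4,2,4,4),(4,4,4,4)\}$ one is forced to have $\alpha = 1$, whence $\nu = \Ad(g)$ for some $g \in G(r,1,q,n)$. This already establishes the ``provided'' half of the proposition: any class-preserving outer automorphism, except possibly when $(r,p,q,n)$ is one of the two listed exceptions, is induced by an inner automorphism of $G(r,1,q,n)$ via restriction to the normal subgroup $G$.

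To finish the main ``only if'' statement, I would still assume $\nu$ is outer and $(r,p,q,n) \notin \{(4,2,4,4),(4,4,4,4)\}$, so that $\nu = \Ad(g)$ with $g \in G(r,1,q,n)$. Since $\nu$ is class-preserving, in particular $\Ad(g)(\pi)$ and $\pi$ are conjugate in $G$ for every $\pi \in S_n \subset G$, so Lemma \ref{new4.3} applies. Its first alternative, namely $\Ad(g) = \Ad(h)$ for some $h \in G(r,p,q,n)$, would force $\nu$ to be inner and must be discarded; the second alternative therefore holds and yields the congruence $r \equiv p \equiv q \equiv n \equiv 2^i \modu 2^{i+1})$ for some integer $i > 0$. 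Together with $n \geq 3 > 2$, this completes the verification. The remaining exceptional case $(r,p,q,n) = (4,4,4,4)$ already satisfies this congruence with $i = 2$ and $n = 4 > 2$, so it too falls under the second disjunct of the proposition, and the case $(4,2,4,4)$ is subsumed by the first disjunct.

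I do not anticipate a serious obstacle at this stage: all the genuine technical work has already been carried out in Lemmas \ref{vconginv} and \ref{new4.3}, and what remains is essentially bookkeeping --- composing the alternatives they provide and eliminating the inner possibility. The subtlety, such as it is, lies in carefully tracking which of the two exceptional quadruples must be set aside at which step, since Lemma \ref{vconginv}(2) and the $(4,4,4,4)$ clause of the ``provided'' statement concern precisely the same source of ambiguity, namely whether $\alpha$ can equal $\tau$.
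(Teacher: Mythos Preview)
Your proposal is correct and follows essentially the same route as the paper, assembling Lemmas \ref{no-class-lem}, \ref{vconginv}, and \ref{new4.3} in exactly the way the paper's terse proof indicates. The one omission is the case $n=1$, which the paper dispatches separately: $G(r,p,q,1)$ is abelian, so every conjugacy class is a singleton and the only class-preserving automorphism is the identity.
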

 
 \begin{remark} Our familiar automorphism $\tau : (\pi,x)\mapsto(\pi,-x)$ provides a class-preserving outer automorphism of the groups $G(4,2,4,4)$ and $G(4,4,4,4)$ which is not induced by an inner automorphism of $G(4,1,4,4)$. 
 \end{remark}
 
\begin{proof} The proposition holds if $n=1$ since only the identity automorphism 
of an abelian group is class-preserving. The result follows from  Lemma \ref{no-class-lem} when $n=2$ and from
 Lemmas \ref{new4.3} and  \ref{vconginv} when $n\geq 3$.
\end{proof}

This proposition enables us to adapt several arguments in \cite{Ma} to prove the following result, which establishes part (2) of Theorem \ref{thm3}.

\begin{prop}\label{part2-prop} Assume $n>2$ and $\gcd(p,n) = 2$. If $q$ is odd, then $G(r,p,q,n)$ does not have a generalized involution model. 
\end{prop}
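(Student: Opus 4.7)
The plan is to reduce via Lemma~\ref{5.1sub} to studying GIMs with respect to $\tau$, and then lift a hypothetical GIM of $G=G(r,p,q,n)$ to one of $G(r,p,n)$, contradicting Theorem~\ref{thm1}.

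Under the hypotheses $q$ odd and $n>2$, both exceptional families in Proposition~\ref{no-class-prop} (the isolated case $(r,p,q,n)=(4,2,4,4)$ and the congruence $r\equiv p\equiv q\equiv n\equiv 2^i\pmod{2^{i+1}}$ for $i>0$) force $q$ even, so $G$ admits no class-preserving outer automorphisms. Since $\gcd(p,n)=2$ places us in the equality case of Theorem~\ref{f}, Lemma~\ref{5.1sub} reduces the problem for $G$ to showing the nonexistence of a GIM with respect to $\tau$. The same three ingredients, together with \cite[Proposition 3.1]{MM}, perform the corresponding reduction for $G(r,p,n)$; and by Theorem~\ref{thm1}, the latter group admits no GIM since $\gcd(p,n)=2\neq 1$.

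Assume then that $\{\lambda_i:H_i\to\CC\}_{i\in I}$ is a GIM for $G$ with respect to $\tau$, with $H_i=C_{G,\tau}(\omega_i)$. For each $i$, choose a lift $\tilde\omega_i\in G(r,p,n)$, which remains a generalized involution by \cite[Lemma 4.2]{Ca2}, and set $\tilde H_i=C_{G(r,p,n),\tau}(\tilde\omega_i)$. Arguing as in the proof of Corollary~\ref{part1-cor}, one finds $\tilde H_i\cap C_q=\{1\}$ because $q$ is odd; using the centrality of $C_q$ one checks further that $\pi|_{\tilde H_i}:\tilde H_i\xrightarrow{\sim} H_i$ and that the entire preimage of the class $[\omega_i]$ in $G(r,p,n)$ forms a single $\tau$-twisted conjugacy class $[\tilde\omega_i]$ of size $q|[\omega_i]|$. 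Define $\tilde\lambda_i:=\lambda_i\circ \pi|_{\tilde H_i}$; I would then claim that $\{\tilde\lambda_i:\tilde H_i\to\CC\}_{i\in I}$ is a GIM for $G(r,p,n)$ with respect to $\tau$, delivering the desired contradiction.

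The dimensional identity $\sum_i[G(r,p,n):\tilde H_i]=q\,|\cI_{G,\tau}|=|\cI_{G(r,p,n),\tau}|=\sum_{\tilde\psi\in\Irr(G(r,p,n))}\tilde\psi(1)$ is immediate from the equality case of Theorem~\ref{f} applied to both groups, but the main obstacle is verifying that $V:=\sum_i\Ind_{\tilde H_i}^{G(r,p,n)}\tilde\lambda_i$ is multiplicity-free. My approach would be induction in stages through $\tilde H_i\cdot C_q\cong \tilde H_i\times C_q$, decomposing each summand according to the central character $\chi_k$ of $C_q$. The $\chi_0$-component evaluates directly to $\pi^*\bigl(\sum_\psi \psi\bigr)=\sum_{\tilde\psi\in\Irr_0}\tilde\psi$ by construction of $\tilde\lambda_i$, and each nontrivial $\chi_k$-component should match $\sum_{\tilde\psi\in\Irr_k}\tilde\psi$ after twisting by a linear character of $G(r,p,n)$ whose restriction to $C_q$ equals $\chi_k$. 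Such a character exists in abundance when $\gcd(n,q)=1$; the residual case $\gcd(n,q)>1$ is the hardest step, requiring the explicit parametrization of linear characters of $G(r,p,n)$ and a projective-representation-theoretic analysis of $G$ with cocycle induced from the central extension $C_q\hookrightarrow G(r,p,n)\twoheadrightarrow G$ to complete the verification.
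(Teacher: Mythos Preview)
Your strategy is genuinely different from the paper's, and the reduction to $\tau$ via Proposition~\ref{no-class-prop} and Lemma~\ref{5.1sub} is correct. The lifting of twisted conjugacy classes is also fine: because $q$ is odd, the map $c^{ir/q}\mapsto c^{2ir/q}$ is a bijection on $C_q$, so the preimage in $G(r,p,n)$ of each $\tau$-twisted class in $G$ is a single $\tau$-twisted class, $\tilde H_i\cap C_q=\{1\}$, and the dimension count goes through.

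The gap is in the multiplicity-freeness of $V=\sum_i\Ind_{\tilde H_i}^{G(r,p,n)}\tilde\lambda_i$, and it is not confined to the case $\gcd(n,q)>1$. Your twisting argument does not do what you claim even when a linear character $\mu_k$ of $G(r,p,n)$ with $\mu_k|_{C_q}=\chi_k$ exists. Tensoring by $\mu_k$ gives
\[
\mu_k\otimes\Ind_{\tilde H_i C_q}^{G(r,p,n)}(\tilde\lambda_i\boxtimes\chi_0)
=\Ind_{\tilde H_i C_q}^{G(r,p,n)}\bigl((\mu_k|_{\tilde H_i}\cdot\tilde\lambda_i)\boxtimes\chi_k\bigr),
\]
not $\Ind(\tilde\lambda_i\boxtimes\chi_k)$. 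Since $\mu_k$ does \emph{not} descend to $G$ when $k\neq 0$, the characters $\mu_k|_{\tilde H_i}$ (transported to $H_i$) are not the restrictions of a single linear character of $G$, so you cannot conclude that $\{\mu_k|_{\tilde H_i}\cdot\lambda_i\}$ is again a GIM for $G$. Thus the $\chi_k$-isotypic piece of $V$ is not identified, and nothing prevents it from having multiplicities. Without this, there is no contradiction with Theorem~\ref{thm1}, and the argument collapses. (One could try to show directly that $\sum_i\langle\rho|_{\tilde H_i},\tilde\lambda_i\rangle=1$ for every $\rho\in\Irr_k(G(r,p,n))$ via Mackey theory, but this is a substantial computation you have not carried out, and it is not clear it succeeds.)

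For comparison, the paper does not attempt to lift at all. It adapts \cite[Lemmas 5.4 and 5.6]{Ma} directly to the quotient, splitting on the parity of $r/p$. When $r/p$ is even, the nontrivial central element $c^{r/2}$ lies in the commutator subgroup of every $C_{G,\tau}(\omega)$, forcing it into the kernel of any model character; when $r/p$ is odd, two distinct $\tau$-twisted classes have centralizers that are conjugate copies of $G(2,2,n)$, and Baddeley's results then preclude a GIM. Both arguments are short and avoid the delicate representation-theoretic bookkeeping your lifting requires.
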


\begin{proof}
When $q=1$ the proposition coincides with
\cite[Lemmas 5.4 and 5.6]{Ma}. The arguments used to prove those results may be applied essentially without changes (once we incorporate a few facts proved in the present work) to the more general situation of this proposition.  We summarize the details as follows.

Assume $n>2$ and $\gcd(p,n) = 2$ and $q$ is odd, and write $G=G(r,p,q,n)$. Note that $r$, $p$, and $n$ are then all even.
 By Proposition \ref{no-class-prop}, $G$ has no class-preserving outer automorphisms, so by Lemma \ref{5.1sub}, to show that $G$ has no GIMs at all it suffices to show that $G$ has no GIMs with respect to our usual automorphism $\tau : (\pi,x) \mapsto (\pi,-x)$.

Assume $r/p$ is even.  
 Since $q$ is odd and $n$ is even,  $c^{r/2}$ is a nontrivial element of $G$. Furthermore,
  one can show as in \cite[Lemma 5.4]{Ma} that $c^{r/2}$ belongs to the commutator subgroup of the $\tau$-twisted centralizer 
of every generalized involution  $\omega \in \cI_{G,\tau}$. (It is not hard to see that if this holds for $q=1$ then it holds for any odd $q$.)
It follows  as in the proof of \cite[Lemma 5.4]{Ma} that $G$ can have no GIM with respect to $\tau$, since  the induced character of $G$ corresponding to any such model would contain $c^{r/2}$ in its kernel, contradicting the fact that the kernel of $\sum_{\psi \in \Irr(G)} \psi$ is $\{ 1\}$.

Alternatively assume $r/p$ is odd. The following facts then hold as in the proof of \cite[Lemma 5.6]{Ma}:
\begin{itemize}
\item The generalized involutions $1$ and  $\omega \eqdef (1,(1,-1,1,-1,\dots,1,-1))$  belong to disjoint $\tau$-twisted conjugacy classes in $G$ (in particular, $\omega \neq 1$).

\item The twisted centralizer $C_{G,\tau}(1)$ is generated by $s_1,s_2,\dots,s_{n-1} \in S_n$ together with  $s^{r/2} =(1,\tfrac{r}{2}(e_1-e_2)) \in N(r,p,q,n)$, and is isomorphic 
to the complex reflection group $G(2,2,n)$. 

\item Since we have  $z^{-1} \cdot \omega \cdot \tau(z) = c^{-1} \in C(r,p,q,n)$ for the element
$z \eqdef (1,(1,0,1,0\dots,1,0)) \in N(r,1,q,n),$  the automorphism $\Ad(z)$ of $G$ induces an isomorphism   $C_{G,\tau}(1) \xrightarrow{\sim} C_{G,\tau}(\omega)$.

\end{itemize}
Checking each of these claims is straightforward using our assumptions on $r,p,q,n$.
From these properties$-$in particular from the fact that any representative list of $\tau$-twisted centralizers in $G$ 
includes two conjugate subgroups isomorphic to $G(2,2,n)$$-$it follows by results of Baddeley \cite{B91}, exactly as in the proof of \cite[Lemma 5.6]{Ma}, that $G$ has no generalized involution models. 
\end{proof}

Finally, by applying part (c) of Lemma \ref{vconginv}, we may prove the following theorem establishing the remaining parts (3)-(5) of Theorem \ref{thm3} in the introduction.

\begin{thm}\label{main-thm}
Let $r,p,q,n$ be positive integers with $p$ and $q$ dividing $r$ and $pq$ dividing $rn$, and let $G =G(r,p,q,n)$. 
    
    \begin{enumerate}
    
    \item[(1)] If $\gcd(p,n) = 3$ then $G$ has a GIM if and only if $(r,p,q,n)$ is 
    \[ (3,3,3,3) \text{ or } (6,3,3,3) \text{ or } (6,6,3,3) \text{ or }(6,3,6,3).\]

\item[(2)] If $\gcd(p,n) = 4$ then $G$ has a GIM only if 
$r \equiv p \equiv q \equiv n \equiv 4 \modu 8).$

\item[(3)] If $\gcd(p,n) \geq 5$ then $G$ does not have a GIM.

    \end{enumerate}
\end{thm}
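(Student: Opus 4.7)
The plan is to assume $G=G(r,p,q,n)$ has a GIM with respect to some involutive automorphism $\nu$ and derive a contradiction in each case where we claim no GIM exists, while verifying GIM existence for the four exceptional quadruples of part (1) by direct computation.

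First, the existence of a GIM gives the identity $\sum_i \Ind_{H_i}^G(\lambda_i) = \sum_{\psi \in \Irr(G)} \psi$, so basic character theory forces $\psi \circ \nu = \bar\psi$ for every $\psi \in \Irr(G)$, equivalently that $\nu(h)$ is $G$-conjugate to $h^{-1}$ for every $h \in G$. I would check case-by-case that whenever $(r,p,q,n)$ lies in one of the cases covered by parts (1)--(3) (outside the four exceptions listed in (1)), the quadruple is not among the six exceptional tuples of Lemma \ref{vconginv}(3): the four exceptions with $\gcd(p,n)=3$ are precisely those excluded in part (1), the exception $(4,2,4,4)$ has $\gcd(p,n)=2$, and $(4,4,4,4)$ satisfies the congruence in part (2). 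Lemma \ref{vconginv}(3) thus yields $\nu = \Ad(g) \circ \tau$ for some $g \in G(r,1,q,n)$, and since inner automorphisms preserve conjugacy classes this forces $\tau(h)$ to be $G$-conjugate to $h^{-1}$ for every $h \in G$.

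Second, I would exhibit an explicit element $h \in G$ for which $\tau(h)$ is \emph{not} $G$-conjugate to $h^{-1}$, contradicting the previous step. The computation relies on Proposition \ref{grpnconjugacy}: one checks that $s_p(\tau(h)) = -s_p(h)$ while $s_p(h^{-1}) = s_p(h)$, so $\tau(h)$ and $h^{-1}$ fail to be conjugate in $G(r,p,n)$ precisely when $2s_p(h) \not\equiv 0 \pmod{d_p(h)}$. For $\gcd(p,n) \geq 3$ outside the listed exceptions, it is straightforward to produce $h$ with $d_p(h) = \gcd(p,n)$ and $s_p(h) = 1$; the remaining subtlety is to verify that no shift of the form $h \mapsto h \cdot c^{jr/q}$ enabled by the quotient by $C_q$ can move $s_p(\tau(h))$ into the class of $s_p(h^{-1})$ modulo $d_p(h)$ in $\qc$. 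This amounts to tracking the effect of multiplication by $c^{jr/q}$ on the splitting index, which depends in a controlled way on the cycle structure of $h$.

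For the four exceptional quadruples $(3,3,3,3)$, $(6,3,3,3)$, $(6,6,3,3)$, $(6,3,6,3)$ in part (1) the above obstruction disappears, so a separate argument is required to establish GIM existence. These groups are of modest order, and I would verify GIM existence in each case by producing explicit subgroups $H_i \subset G$ and linear characters $\lambda_i$ with $\sum_i \Ind_{H_i}^G(\lambda_i) = \sum_{\psi \in \Irr(G)} \psi$; this is a routine check carried out in the computer algebra system {\sc Magma}. The hard part of the proof will be the second step: controlling the interplay between the splitting index $s_p$ and the central subgroup $C_q$, and confirming that in every non-exceptional case the natural candidate element $h$ genuinely witnesses the failure of $\tau(h) \sim h^{-1}$ in $\qc$.
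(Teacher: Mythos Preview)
There is a genuine gap in your second paragraph. The element $g$ produced by Lemma \ref{vconginv}(3) lies in $G(r,1,q,n)$, not in $G=G(r,p,q,n)$, so $\Ad(g)$ is \emph{not} in general an inner automorphism of $G$. Consequently the implication ``$\Ad(g)\circ\tau(h)\sim_G h^{-1}$ for all $h$'' $\Rightarrow$ ``$\tau(h)\sim_G h^{-1}$ for all $h$'' is false: conjugation by $g$ can shift the splitting index $s_p$ by the fixed quantity $\Delta(g)$, which need not be a multiple of $p$. Thus your proposed witness element $h$ with $2s_p(h)\not\equiv 0\pmod{d_p(h)}$ does not by itself yield a contradiction; you would also have to rule out every possible value of $\Delta(g)\bmod \gcd(p,n)$, on top of the $C_q$-shifts you already flagged. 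That is a two-parameter family of obstructions to eliminate, and it is not clear your single element $h$ suffices.

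The paper sidesteps this entirely with a counting argument. After obtaining $\nu=\Ad(g)\circ\tau$, it does not try to compare $\tau(h)$ with $h^{-1}$ at all. Instead it builds an injection $\cI_{G,\nu}\hookrightarrow\cI_{G,\tau}$ by observing that if $h,h'\in G(r,1,q,n)$ lie in the same coset of $G$, have the same image in $S_n$, and both satisfy $h\cdot\tau(h)=g\cdot\tau(g)$, then their ``difference'' lies in $\cI_{G,\tau}$. This gives $\sum_{\psi}\psi(1)=|\cI_{G,\nu}|\le|\cI_{G,\tau}|$, and Theorem \ref{f} then forces $\gcd(p,n)\le 2$ or the mod-$8$ condition of part (2). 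This approach is much cleaner because it never has to control the specific element $g$, and it leverages the global inequality of Theorem \ref{f} rather than a local splitting-index computation. Your handling of the exceptional quadruples (via {\sc Magma}) matches the paper's.
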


\begin{remark} Observe that case (2) asserts a necessary but  {not} sufficient condition. It remains an open problem to determine whether $G$ has a GIM when $\gcd(p,n) =4$ and $r \equiv p \equiv q \equiv n \equiv 4 \modu 8)$. Calculations show that $G(4,4,4,4)$  has a GIM but $G(12,4,4,4)$ and $G(12,4,12,4)\cong G(12,12,4,4)$ do not.
\end{remark}

\begin{proof}

Computer calculations show that  our assertions (1)-(3) hold if  $(r,p,q,n)$ is   one of the exceptions  listed in Lemma \ref{vconginv}.
We may therefore assume that $(r,p,q,n)$ is not one of these cases. Since (1)-(3) do not apply unless $\gcd(p,n) \geq 3$, 
we may also assume that 
$n\geq 3$.

Let $G = \qc$, and 
   suppose
   that $G$ has a GIM with respect to $\nu\in \Aut(G)$, so that 
   \[
   \sum_{\psi \in \Irr(G)} \psi(1) = |\cI_{G,\nu}|.
   \]
    By \cite[Proposition 2]{BG}, the elements   $\nu(h)$ and $h^{-1}$ are then conjugate in $G$ for all $h\in G$,
   so by Lemma \ref{vconginv} 
we 
must 
have
$\nu=\Ad(g)\circ \tau$ for some $g\in G(r,1,q,n)$. The idea of the rest of the proof is now simple: we just show that $|\cI_{G,\nu}| \leq |\cI_{G,\tau}|$, and then apply Theorem \ref{f}.

Heading in this direction, 
let $H = \{ \omega g : \omega \in G\}$ denote 
 the right coset of $G$ in $G(r,1,q,n)$ containing $ g$.
Since
 $\omega\in G$ satisfies $\omega\cdot \nu(\omega)=1$ if and only if $(\omega g)\cdot \tau(\omega g)=g\cdot \tau (g)$,
 we 
 then
 have
\[
   |\cI_{G,\nu}|=|\{h \in H :h\cdot \tau (h)=g\cdot \tau(g)\}|.
\]
The element $g\cdot \tau(g)$ belongs to the normal subgroup $N=N(r,p,q,n)$ as a consequence of the following argument.
Since  $\tau$ is an  involution, 
$\tau \circ \Ad(g) \circ \tau = \Ad(\tau(g))$. Since $\nu = \Ad(g)\circ \tau$ is also an involution, $ \Ad\( g\cdot \tau(g) \) =\nu^2= 1$, so 
 $g\cdot \tau(g)$ belongs to the center of $G$.  As $n \geq 3$, this implies $g \cdot \tau(g) \in N$ as claimed.

For each $\pi \in S_n$, let $\mathcal X_{\pi}$ denote the set of $h \in H$ with $|h| = \pi$ and $h \cdot \tau(h) = g\cdot \tau(g)$, so that $|\cI_{G,\nu}| = \sum_{\pi \in S_n} |\cX_\pi|$.
If we write $h \in \cX_\pi$ in the form $h = (\pi,x)$ then by definition
\[
 (\pi^2,\pi^{-1}(x)-x ) = g\cdot \tau(g).
 \]
Since the right hand side of this equation lies in $N$,
we must have $\pi^2=1$. 
Therefore if $(\pi,x),(\pi,y) \in \cX_\pi$ then $(\pi,x-y) \in \cI_{G,\tau}$, since automatically $(\pi,x-y) \in G$ as we have $\sum_i x_i \equiv \sum_i y_i \equiv \Delta(g) \modu p)$.
In light of this observation, there is evidently  an injective map $\bigcup_{\pi \in S_n} \cX_\pi \to \cI_{G,\tau}$, and so 
\[
   \sum_{\psi \in \Irr(G)} \psi(1) =
   |\cI_{G,\nu}| = \sum_{\pi \in S_n} |\cX_\pi| \leq |\cI_{G,\tau}|.\]
By Theorem \ref{f}, this inequality  holds only if it is an equality and either  $\gcd(p,n) \leq 2$ or  $\gcd(p,n) =4$ and $r\equiv p \equiv q \equiv n \equiv 4 \modu 8)$. Since we have assumed that $(r,p,q,n)$ is not one of the exceptions in (1), this completes our proof.
\end{proof}

\section{Conjectures} \label{last-sect}

The preceding results leave us with only a partial solution to the problem of determining which of the groups $G(r,p,q,n)$ haves GIMs. 
We close with some conjectures as to what a complete classification might look like. To explain our intuition behind these, we require briefly some additional terminology; as usual, continue to let $r,p,q,n$ be positive integers with $p$ and $q$ dividing $r$ and $pq$ dividing $rn$.

The irreducible representations of $G=G(r,p,q,n)$ are obtained from those of $G'=G(r,1,q,n)$ in the following way. Choose an irreducible representation $\rho$ of $G'$. The restriction $\Res_G^{G'}(\rho)$ is then the multiplicity-free sum of $k$ non-isomorphic irreducible representations $\rho_1,\rho_2,\dots,\rho_k$ of $G$. If $k>1$ then we say that each of the irreducible representations $\rho_i$ is \emph{split}. This notion is well-defined and depends only on the isomorphism class of $\rho_i$, because
  if $\rho$ and $\rho'$ are two irreducible representations of $G'$ then $\Res_G^{G'}(\rho)$ and $\Res_G^{G'}(\rho')$ are either isomorphic or have no isomorphic irreducible subrepresentations.
(This follows by Clifford theory since $G \vartriangleleft G'$ and $G'/G$ is cyclic; see \cite[\S6A]{Stembridge}.)
  
  We can say precisely when split representations (i.e., irreducible representations which are split) of $G$ exist.
  
  \begin{prop}
The group  $G(r,p,q,n)$ has no split representations if and only if  (i) $\gcd(p,n)=1$ or (ii) $\gcd(p,n)=2$ and $r\equiv p \equiv q \equiv n \equiv 2 \modu 4)$.
\end{prop}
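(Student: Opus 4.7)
The plan is to parametrize the irreducible representations of $G=G(r,p,q,n)$ via Clifford theory applied to the normal inclusion $G \vartriangleleft G'=G(r,1,q,n)$, translate the existence of a split representation into an arithmetic condition on $r$-tuples of partitions, and then analyze that condition case by case.

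First I would recall that $\Irr(G(r,n))$ is indexed by $r$-tuples of partitions $\boldsymbol{\lambda}=(\lambda^{(0)},\ldots,\lambda^{(r-1)})$ with $\sum_i|\lambda^{(i)}|=n$, and that the scalar $c=\zeta_r I_n$ acts on the corresponding representation $\rho_{\boldsymbol{\lambda}}$ by $\zeta_r^{e(\boldsymbol{\lambda})}$ with $e(\boldsymbol{\lambda})\eqdef\sum_{i=0}^{r-1}i\,|\lambda^{(i)}|$. Hence $\rho_{\boldsymbol{\lambda}}$ descends to $G'=G(r,n)/C_q$ if and only if $q\mid e(\boldsymbol{\lambda})$. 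Since $G'/G\cong\ZZ_p$ acts on the index set by the cyclic shift $\lambda^{(i)}\mapsto\lambda^{(i-r/p)}$, Clifford theory (applied to the cyclic quotient) shows that $\Res_G^{G'}\rho_{\boldsymbol{\lambda}}$ decomposes into exactly $d$ distinct irreducibles, where $d$ is the order of the shift-stabilizer of $\boldsymbol{\lambda}$. Therefore a split representation of $G$ exists if and only if there is some $\boldsymbol{\lambda}$ whose shift-stabilizer has order $d>1$ and which satisfies $q\mid e(\boldsymbol{\lambda})$.

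A shift-stabilizer of order $d$ forces $\boldsymbol{\lambda}$ to be periodic with period $r/d$, and in particular $d\mid\gcd(p,n)$. Writing $m_a\eqdef|\lambda^{(a)}|$ for $0\leq a<r/d$ and summing $e(\boldsymbol{\lambda})$ by residues modulo $r/d$ yields the identity
\[
 e(\boldsymbol{\lambda}) \;=\; d\,e' \;+\; \frac{r(d-1)n}{2d},\qquad e'\eqdef\sum_{a=0}^{r/d-1}a\,m_a.
\]
Since each $m_a$ may be any non-negative integer with $\sum_a m_a=n/d$, the parameter $e'$ takes every integer value in $[0,(r/d-1)(n/d)]$. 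Using $p\mid r$ and $q\mid r$ one has $\lcm(d,q)\mid r$, hence $r/d\geq q/\gcd(d,q)$, so the range of $e'$ surjects onto $\ZZ_{q/\gcd(d,q)}$. Consequently, split representations exist if and only if some divisor $d>1$ of $\gcd(p,n)$ satisfies $\gcd(d,q)\mid \tfrac{r(d-1)n}{2d}$.

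The proof then reduces to a short case analysis on $\gcd(p,n)$. When $\gcd(p,n)=1$ no admissible $d>1$ exists, giving direction (i). When $\gcd(p,n)\geq 3$, I take $d$ to be a prime divisor of $\gcd(p,n)$: if $d$ is odd then $d\mid r$ immediately yields the divisibility, whereas if $\gcd(p,n)$ is a power of two (so $\geq 4$) I take $d=2$ and note that $v_2(rn)\geq v_2(pn)\geq 4$ makes $rn/4$ even, again forcing solvability. When $\gcd(p,n)=2$, only $d=2$ is available, and the condition becomes: either $q$ is odd or $rn/4$ is even. Finally, $2$-adic bookkeeping using $pq\mid rn$, $p\mid r$, $q\mid r$ and $\min\{v_2(p),v_2(n)\}=1$ shows that the failure of this condition forces $v_2(r)=v_2(p)=v_2(q)=v_2(n)=1$, i.e. case (ii). The step I expect to require the most care is the justification that the range of $e'$ actually surjects onto the needed residue classes in the degenerate situations $r/d=1$ or $n/d=1$ where the combinatorial freedom is smallest; the inequality $r/d\geq q/\gcd(d,q)$ handles most of them, but the genuinely boundary instances (such as $r=p=q=n=d$) I would verify by direct computation of $e(\boldsymbol{\lambda})$.
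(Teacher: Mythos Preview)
Your proposal is correct, and it shares the same Clifford-theoretic setup as the paper (identifying split representations with shift-periodic $r$-tuples $\boldsymbol{\lambda}$ satisfying the $q$-divisibility constraint on $e(\boldsymbol{\lambda})$), but your treatment of the existence question is genuinely different from the paper's.

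The paper handles existence by writing down explicit $r$-tuples case by case: in most situations the ``constant'' tuple $(\tfrac{n}{d},\emptyset,\dots,\emptyset,\tfrac{n}{d},\emptyset,\dots)$ already satisfies $q\mid e(\boldsymbol{\lambda})$, while in one residual situation ($\gcd(p,n)$ a power of $2$, $n/2$ odd, $q$ even, $r/q$ odd, $q/2$ even) a perturbed tuple $(\tfrac{n}{2}-\tfrac{q}{4},\tfrac{q}{4},\emptyset,\dots)$ is produced by hand. Your approach instead derives the closed formula $e(\boldsymbol{\lambda})=de'+\tfrac{r(d-1)n}{2d}$ for $d$-periodic tuples, observes that $e'$ sweeps out a full interval of length at least $q/\gcd(d,q)$ (via $\lcm(d,q)\mid r$, hence $r/d\geq q/\gcd(d,q)$), and thereby reduces everything to the single arithmetic test $\gcd(d,q)\mid\tfrac{r(d-1)n}{2d}$. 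This is more uniform: it eliminates the paper's ad hoc second construction and replaces the explicit examples by a surjectivity argument. The paper's version has the compensating advantage of exhibiting concrete witnesses.

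Your flagged concern about the boundary cases is not actually a gap. When $r/d=1$ you have $d=r$, so $q\mid r=d$ gives $q/\gcd(d,q)=1$, and the single value $e'=0$ already surjects onto $\ZZ_1$. When $n/d=1$ the interval $[0,r/d-1]$ still has $r/d\geq q/\gcd(d,q)$ consecutive integers. So the inequality you state handles all instances, and no separate direct computation is needed.
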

\begin{proof} The following facts can be found in \cite[\S 6]{Ca1}.
The irreducible representations of $G(r,1,q,n)$  are indexed by $r$-tuples of integer partitions $(\lambda_0,\lambda_1,\dots,\lambda_{r-1})$ such that $\sum_i |\lambda_i|=n$ and 
such that $\sum_i i |\lambda_i|$ is divisible by $q$. The representation corresponding to such an $r$-tuple splits into more than one irreducible representation when restricted to $G(r,p,q,n)$ if and 
only if $\lambda_i=\lambda_{i+r/d}$ for all $i$ (where the indices are considered modulo $r$) for some  divisor $d> 1$ of $p$. 
Assume this condition holds for some $d$; 
it then follows that $d$ divides $\gcd(p,n)$ since 
\[
n=
\sum_{0\leq i < r} |\lambda_i| = d \sum_{0 \leq i < r/d} |\lambda_i|.
\]

It follows immediately that $G$ has no split representations if $\gcd(p,n) = 1$. Assume $\gcd(p,n) = 2$ and that the representation of $G(r,1,q,n)$ indexed by $(\lambda_0,\lambda_1,\dots,\lambda_{r-1})$ splits in $G$.
We must then have $\lambda_i = \lambda_{i+r/2}$ for all $i$, so 
\[ \sum_{0 \leq i < r} i |\lambda_i| = \sum_{0\leq i < r/2} (2i+\tfrac{r}{2}) |\lambda_i| \equiv \tfrac{r}{2} \sum_{0\leq i < r/2}|\lambda_i|   \equiv \tfrac{rn}{4}\modu 2).
\]
Since $q$ divides the left-most expression,  we cannot have $r\equiv p \equiv q \equiv n \equiv 2 \modu 4)$ as this would imply that the even number $q$ divides an odd number. Thus also in case (ii) $G$ has no split representations.

Assume now that  $\gcd(p,n)\geq 2$. 
 We wish to construct a split representation of $G$, so let $d>1$ be a prime divisor of 
$\gcd(p,n)$. The $r$-tuple of trivial partitions
\[
(\lambda_0,\lambda_1,\dots,\lambda_{r-1}) =
(\underbrace{\tfrac{n}{d},\emptyset,\ldots,\emptyset}_{r/d}, \underbrace{\tfrac{n}{d},\emptyset,\ldots,\emptyset}_{r/d},\ldots, \underbrace{\tfrac{n}{d},\emptyset,\ldots,\emptyset}_{r/d}).
\]
 indexes a split representation if   $d$ is odd, or if $d=2$ and either $n/2$ is even or $q$ is odd or $r/q$ is even,  since then $
\sum_{0\leq i < r} i |\lambda_i|=\frac{rn(d-1)}{2 d}$ is divisible by $q$.

Suppose we are in the remaining case that $\gcd(p,n)$ is a  nontrivial power of 2 and $d=2$, while $n/2$ is odd and $q$ is even and $r/q$ is odd. Then $\frac{rn}{4} \equiv \frac{q}{2} \modu q)$, and if $q/2$ is even   the $r$-tuple of trivial partitions 
\[ (\lambda_0,\lambda_1,\dots,\lambda_{r-1}) =
(\underbrace{\tfrac{n}{2}-\tfrac{q}{4},\tfrac{q}{4},\emptyset,\ldots,\emptyset}_{r/2},\underbrace{\tfrac{n}{2}-\tfrac{q}{4},\tfrac{q}{4},\emptyset,\ldots,\emptyset}_{r/2})
\]
indexes a split representation since $\sum_{0\leq i < r} i |\lambda_i|=\frac{nr}{4}+\frac{q}{2}$ is divisible by $q$.
This completes our proof because  if $q/2$ is odd then, since $n/2$ and $r/q$ are odd while $p$ is an even divisor of ${rn}/{q}$, 
we must have $r\equiv p \equiv q \equiv n \equiv 2 \modu 4)$ and in turn $\gcd(p,n) = 2$, and we have already considered this case.
\end{proof}

Following \cite{Ca2}, we say that a generalized involution of $G(r,p,q,n)$ with respect to the inverse transpose automorphism $\tau : (\pi,x)\mapsto(\pi,-x)$ is an \emph{absolute involution}. One checks that an element $\omega \in G(r,p,q,n)$ is an absolute involution if and only if (i) its preimages in $G(r,p,1,n)$ are  all symmetric matrices or (ii) its preimages in $G(r,p,1,n)$ are all antisymmetric matrices and $q$ is even. We say that $\omega$ is \emph{symmetric} or \emph{antisymmetric} according to these cases.
%
In analogy with the preceding proposition, we have this statement.

\begin{prop}
The group $G(r,p,q,n)$ has no antisymmetric absolute involutions if and only if (i) $q$ is odd or (ii) $n$ is 
odd or (iii) $r \equiv p \equiv q \equiv n \equiv 2 \modu 4)$. 
\end{prop}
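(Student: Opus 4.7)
The plan is to parametrize the antisymmetric matrices in $G(r,p,n)$ directly, determine arithmetic conditions for one to exist, and then match these conditions against the three cases in the proposition.

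First I would handle the easy reduction: if $q$ is odd, then by the preceding characterization no absolute involution has antisymmetric preimages, and we are in case (i). Assume from now on that $q$ is even, which forces $r$ even since $q \mid r$. I would then characterize when an element $A=(\pi,x)\in G(r,p,n)$ satisfies $A^T=-A$. Comparing nonzero positions forces $\pi^2=1$; if $\pi$ fixed some $i$ then $A_{ii}=\zeta_r^{x_i}\neq 0$ would have to equal its own negation, which is impossible. So $\pi$ is a fixed-point-free involution and $n$ is even --- otherwise no antisymmetric $A$ exists, placing us in case (ii). For each 2-cycle $(a,b)$ of $\pi$, comparing the $(a,b)$ entries of $A^T$ and $-A$ yields $\zeta_r^{x_a}=-\zeta_r^{x_b}$, i.e.\ $x_a\equiv x_b+r/2 \modu r)$ (this re-derives $r$ even).

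Next, assume $q$ and $n$ are both even. Fix a fixed-point-free involution $\pi=(a_1,b_1)(a_2,b_2)\cdots(a_{n/2},b_{n/2})$ and treat $y_k\eqdef x_{b_k}$ as a free parameter in $\ZZ_r$, with $x_{a_k}=y_k+r/2$. Then
\[
\sum_{i=1}^n x_i \;=\; \sum_{k=1}^{n/2}(x_{a_k}+x_{b_k}) \;=\; 2\sum_{k=1}^{n/2} y_k + \tfrac{rn}{4},
\]
where $rn/4 \in \ZZ$ since both $r$ and $n$ are even. The matrix $A$ lies in $G(r,p,n)$ iff this sum is divisible by $p$, and (since $c^{ir/q}A$ is antisymmetric whenever $A$ is) any antisymmetric $A$ yields an antisymmetric absolute involution of $G(r,p,q,n)$. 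Hence an antisymmetric absolute involution exists iff $2s+rn/4\equiv 0 \modu p)$ has an integer solution $s$, i.e.\ iff $rn/4\in 2\ZZ + p\ZZ$. This holds automatically when $p$ is odd; when $p$ is even, it holds iff $rn/4$ is even, i.e.\ $8\mid rn$.

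Combining the above, no antisymmetric absolute involution exists iff $q$ is odd, $n$ is odd, or ($q,n,p$ are all even and $8\nmid rn$). It remains to verify that the last clause is equivalent to $r\equiv p\equiv q\equiv n\equiv 2\modu 4)$. For the backward direction, if all four are $\equiv 2 \modu 4)$ then $rn=(4j+2)(4k+2)=8(2jk+j+k)+4\equiv 4\modu 8)$. For the forward direction, assume $q,n,p$ even and $8\nmid rn$: if $4\mid r$ then together with $n$ even we would get $8\mid rn$, so $r\equiv 2\modu 4)$; since $p$ and $q$ are even divisors of an integer of $2$-adic valuation $1$, both have $2$-adic valuation exactly $1$, giving $p\equiv q\equiv 2\modu 4)$; and writing $r=2(2j+1)$ and $n=2m$, the condition $8\nmid rn=4(2j+1)m$ forces $m$ odd, hence $n\equiv 2\modu 4)$.

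The argument is entirely elementary; the only substantive step is the translation of antisymmetry into a congruence on the $x_i$'s. The main thing to be careful about is the case analysis in the last paragraph, where one must use $p,q\mid r$ to force the $2$-adic valuations of $p,q$ to match that of $r$.
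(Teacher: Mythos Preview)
Your proof is correct and follows essentially the same approach as the paper: both reduce the existence of an antisymmetric absolute involution (once $q$ and $n$ are even) to the solvability of $2a+\tfrac{rn}{4}\equiv 0 \modu p)$, and then analyze when this congruence has a solution. Your write-up is more detailed in deriving the antisymmetry condition $x_a\equiv x_b+r/2$ and in verifying the arithmetic equivalence with $r\equiv p\equiv q\equiv n\equiv 2\modu 4)$, whereas the paper simply exhibits the explicit element $(\pi,x)$ with $x=(a,a+\tfrac{r}{2},0,\tfrac{r}{2},\dots,0,\tfrac{r}{2})$ and leaves the converse as a remark.
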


\begin{proof}
Clearly $G=G(r,p,q,n)$ has no antisymmetric absolute involutions if $n$ or $q$ is odd, so assume both are  even. 
If $r,p,q,n$ are 
not all $\equiv 2 \modu 4)$ then either $p$ is odd or 4 divides $r$ or 4 divides $n$. In each of these cases we can find an integer $a$ such that $2a+\frac{rn}{4}$ is divisible by $p$, and  
the element $(\pi,x) \in G$ with
\[
\pi = (1,2)(3,4)\ldots(n-1,n) \in S_n \quad\text{and}\quad x = (a,a+\tfrac{r}{2},0,\tfrac{r}{2},\ldots, 0,\tfrac{r}{2}) \in (\ZZ_r)^n
\]
is then an antisymmetric absolute involution. On the other hand, if $r \equiv p \equiv q \equiv n \equiv 2 \modu 4)$ then $2a + \frac{rn}{4}$ is never a multiple of $p$, and it is straightforward to check that this implies that no absolute involution of $G(r,p,q,n)$ is antisymmetric.
\end{proof}
  
Combining the preceding two results gives us this corollary.

\begin{cor}\label{equiv-cor} 
The group $G(r,p,q,n)$ has  no split representations and no antisymmetric absolute involutions if and only if one of the following conditions holds: \begin{itemize} \item $\GCD(p,n) = 1$ and $q$ or $n$ is odd; \item $\GCD(p,n) = 2$ and $r\equiv p \equiv q \equiv n \equiv 2 \modu 4)$.\end{itemize}
\end{cor}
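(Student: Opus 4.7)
The plan is to deduce this corollary directly from the two propositions immediately preceding it, by intersecting their conditions. First I would observe that the two bulleted cases in the corollary are precisely the pairs of conditions that arise when combining the characterization of ``no split representations'' (requiring $\GCD(p,n)=1$ or $\GCD(p,n)=2$ with $r\equiv p\equiv q\equiv n\equiv 2\modu 4)$) with the characterization of ``no antisymmetric absolute involutions'' (requiring $q$ odd, or $n$ odd, or $r\equiv p\equiv q\equiv n\equiv 2\modu 4)$).

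The argument then splits into two cases according to which of the two conditions from the split-representation proposition holds. If $\GCD(p,n)=1$, then the third alternative in the absolute-involution proposition cannot occur, since it would force both $p$ and $n$ to be even, contradicting $\GCD(p,n)=1$; hence the only surviving possibilities reduce to ``$q$ or $n$ is odd,'' which yields the first bullet. If instead $\GCD(p,n)=2$ with $r\equiv p\equiv q\equiv n\equiv 2\modu 4)$, then $q$ and $n$ are both even, so the first two alternatives in the absolute-involution proposition fail, and only the third alternative can apply; fortunately this is already part of our hypothesis, so the condition is automatically consistent, giving the second bullet.

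Conversely, one needs to check that each of the listed cases really does satisfy both propositions. The case $\GCD(p,n)=1$ with $q$ or $n$ odd meets the first alternative of the split-representation proposition and the first or second alternative of the absolute-involution proposition. The case $\GCD(p,n)=2$ with $r\equiv p\equiv q\equiv n\equiv 2\modu 4)$ meets the second alternative of the first proposition and the third alternative of the second.

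There is essentially no obstacle here: the whole argument is a finite Boolean case analysis on the two stated equivalences, and the only minor subtlety is remembering that the third alternative of the absolute-involution proposition forces $\GCD(p,n)\geq 2$, which is what eliminates its interaction with the first alternative of the split-representation proposition. Thus the proof is simply a two- or three-line verification once the two propositions are combined.
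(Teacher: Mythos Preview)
Your proposal is correct and matches the paper's approach exactly: the paper simply states that the corollary follows by combining the two preceding propositions, and your Boolean case analysis is precisely the intended verification. The only subtlety you identify---that the $r\equiv p\equiv q\equiv n\equiv 2\modu 4)$ alternative in the absolute-involution proposition is incompatible with $\GCD(p,n)=1$---is indeed the one nontrivial observation needed.
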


We have already shown that $G(r,p,q,n)$ has a GIM if $\gcd(p,n) = 1$ and $q$ or $n$ is odd.
The preceding corollary provides something of an explanation for this phenomenon, and so it seems reasonable to conjecture the following statement.

\begin{conj} $G(r,p,q,n)$ has a GIM if it has no split representations and no antisymmetric absolute involutions (i.e., if one of the conditions in Corollary \ref{equiv-cor} holds.)
\end{conj}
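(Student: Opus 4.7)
The first condition in Corollary~\ref{equiv-cor}, namely $\gcd(p,n)=1$ with $q$ or $n$ odd, is already established as Corollary~\ref{part1-cor}, so the essential content of the conjecture is to show that $G(r,p,q,n)$ has a GIM whenever $\gcd(p,n)=2$ and $r \equiv p \equiv q \equiv n \equiv 2 \modu 4)$. My plan is to reduce this case, via Theorem~\ref{thm2}, to the single isomorphism class $G := G(r,2,2,n)$: since under these hypotheses $\gcd(2,n) = 2 = \gcd(p,n) = \gcd(q,n)$, one has $G(r,p,q,n) \cong G$, so it suffices to exhibit a GIM for $G$ when $r \equiv n \equiv 2 \modu 4)$.

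Several of the tools developed in the paper apply readily. By Proposition~\ref{no-class-prop}, outside the exceptional case $(r,p,q,n)=(4,2,4,4)$ (which can be checked by direct computation), every class-preserving automorphism of $G$ is induced by an inner automorphism of $G(r,1,q,n)$; combined with Theorem~\ref{f}, which gives $|\cI_{G,\tau}|=\sum_{\psi\in\Irr(G)}\psi(1)$ whenever $\gcd(p,n)\leq 2$, and with Lemma~\ref{5.1sub}, this reduces the task to constructing a GIM with respect to the inverse transpose automorphism $\tau$. The absence of antisymmetric absolute involutions further implies that every $\omega\in\cI_{G,\tau}$ lifts to a symmetric matrix in $G(r,2,n)$, so $\cI_{G,\tau}$ is parametrized in an especially clean way by symmetric colored involutions on $[n]$.

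With these reductions in hand, the plan is to imitate the analysis carried out in Section~\ref{n=2-sect} for $n=2$. One enumerates the $\tau$-twisted conjugacy classes of $G$, fixes explicit representatives $\omega_1,\ldots,\omega_k$, computes each twisted centralizer $H_i := C_{G,\tau}(\omega_i)$, and chooses a linear character $\lambda_i$ of $H_i$. Using the Clifford-theoretic description of $\Irr(G)$ inherited from the wreath product $G(r,n)=\ZZ_r\wr S_n$, as summarized in \cite[\S 6]{Ca1}, together with Frobenius reciprocity, one would verify that every $\psi\in\Irr(G)$ appears with multiplicity one in $\sum_i\Ind_{H_i}^G(\lambda_i)$; the dimension identity $\sum_i [G:H_i]=\sum_\psi\psi(1)$ is then automatic from Theorem~\ref{f} and the fact that each $\lambda_i$ is one-dimensional.

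The main obstacle is the combinatorial bookkeeping in this last step. The twisted centralizers of symmetric involutions in $G$ are isomorphic to products of smaller projective reflection groups $G(r,p',q',m)$, and the characters $\lambda_i$ must be chosen in a compatible way so that every irreducible representation of $G$ is hit exactly once. In the $n=2$ case this required splitting into three subcases depending on the parities of $r/2$, $r/q$, and $r/(2q)$, together with somewhat elaborate character computations; for general $n$ with $r\equiv n\equiv 2\modu 4)$ the analogous analysis should split into an even larger collection of subcases tracking the $2$-adic valuations of $r$, $n$, and $r/n$, and the hard part is identifying a uniform rule for choosing the $\lambda_i$—perhaps in terms of a generalization of the ``determinant'' character of Example~\ref{exa2}—that makes the Frobenius reciprocity calculation succeed simultaneously in every subcase.
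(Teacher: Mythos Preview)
The statement you are attempting to prove is labeled a \emph{Conjecture} in the paper and is explicitly left open; the paper provides no proof. The authors remark only that computer calculations confirm it in the case $(r,p,q,n)=(2,2,2,6)$, and it is one of three conjectures in the final section motivating further work. So there is no proof in the paper against which to compare your attempt, and your proposal is a plan for attacking an open problem rather than an alternative to an existing argument.

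As a plan, your reductions are sound up to a point. The isomorphism $G(r,p,q,n)\cong G(r,2,2,n)$ via Theorem~\ref{thm2} is correct under the hypothesis $r\equiv p\equiv q\equiv n\equiv 2\modu 4)$, and Theorem~\ref{f} does give $|\cI_{G,\tau}|=\sum_\psi\psi(1)$. However, your appeal to Lemma~\ref{5.1sub} to reduce to the automorphism $\tau$ is not justified: that lemma requires $G$ to have \emph{no} class-preserving outer automorphisms, whereas Proposition~\ref{no-class-prop} says precisely that $G(r,2,2,n)$ with $r\equiv n\equiv 2\modu 4)$ is one of the cases where such automorphisms \emph{can} exist. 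All the proposition gives you is that they are induced by inner automorphisms of $G(r,1,2,n)$, which is strictly weaker than the hypothesis of Lemma~\ref{5.1sub}; bridging this gap would require an extension of \cite[Lemma~5.1]{Ma}, and that is not provided anywhere in the paper.

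Beyond that, you yourself correctly identify the real obstacle: carrying out the explicit construction and verification of the linear characters $\lambda_i$ on the twisted centralizers for general $n$. The paper's authors evidently did not find a way to do this uniformly, which is why the statement remains a conjecture.
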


Computer calculations show that this is at least true in the interesting case $(r,p,q,n) = (2,2,2,6)$.
There is however another plausible conjecture which may explain the fact that $G(2,2,2,6)$ has a GIM.
Recall  that $G(r,p,q,n)$ is \emph{self-dual} if $G(r,p,q,n) \cong G(r,q,p,n)$. 
A necessary condition for the group $G(r,p,q,n)$ to have a generalized involution model (with respect to $\tau$) is that  the number of its absolute involutions  equal  the sum of the degrees of its irreducible characters. By Theorem \ref{f}, this occurs if and only if 
\[
 \gcd(p,n) \leq 2\qquad\text{or}\qquad \text{$\gcd(p,n) =4$ and $r \equiv p \equiv q \equiv n \equiv 4 \modu 8)$.}
\]
Based on Theorems \ref{thm1} and \ref{n=2-thm},
it might seem natural to conjecture that the group $G(r,p,q,n)$ has a GIM if it is self-dual and either of the two preceding conditions  hold. However, computations show that 
while $G(4,4,4,4)$ has a GIM, the groups $G(12,4,4,4)$ and $G(12,4,12,4) \cong G(12,12,4,4)$ do not. We are thus lead to the following modified conjecture; to this statement we do not yet have any counterexamples.

\begin{conj}  $G(r,p,q,n)$ has a GIM if it is  self-dual and $\gcd(p,n)\leq 2$.
\end{conj}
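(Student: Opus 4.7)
By Corollary \ref{cor2}, a self-dual group $G=G(r,p,q,n)$ with $\gcd(p,n)\le 2$ satisfies either (a) $\gcd(p,n)=\gcd(q,n)\le 2$, or (b) $n=2$ and $r/(pq)$ is odd. Case (b) is already covered by Theorem \ref{n=2-thm}, while the subcase of (a) with $\gcd(p,n)=\gcd(q,n)=1$ follows from existing results: Theorem \ref{isomo} gives $G\cong G(r,pq,1,n)$ with $\gcd(pq,n)=1$, which is again self-dual by Corollary \ref{cor2} and so has a GIM by Theorem \ref{thm1}. The remaining, essentially new, range of the conjecture is therefore $\gcd(p,n)=\gcd(q,n)=2$ with $n\ge 4$ even. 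Theorem \ref{isomo} further allows us to normalise $p=2$, so the task reduces to constructing a GIM for $G=G(r,2,q,n)$ when $n\ge 4$ is even and $\gcd(q,n)=2$.

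The plan is to build a GIM directly with respect to the inverse transpose automorphism $\tau:(\pi,x)\mapsto(\pi,-x)$. By part (i) of Theorem \ref{f}, the identity $|\cI_{G,\tau}|=\sum_{\psi\in\Irr(G)}\psi(1)$ already holds under our hypothesis, so the multiplicity-free condition is automatic once one exhibits subgroups $H_i=C_{G,\tau}(\omega_i)$ indexed by representatives $\omega_i$ of the $\tau$-twisted conjugacy classes in $\cI_{G,\tau}$, together with linear characters $\lambda_i:H_i\to\CC^\times$ such that every irreducible character of $G$ appears as a constituent of some $\Ind_{H_i}^G(\lambda_i)$.

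For the construction itself I would mimic the strategy used in Section \ref{n=2-sect}. First, enumerate representatives of the $\tau$-twisted conjugacy classes by combining the $G(r,n)$-conjugacy criterion of Proposition \ref{grpnconjugacy} with the colored cycle decomposition. Second, show by direct computation that each twisted centralizer $C_{G,\tau}(\omega)$ splits as an internal direct product of factors of the form $G(r',2,q',n')$, a smaller member of the same family, together with a collection of $\ZZ_2$-factors arising from the ``fixed-point'' coordinates of $\omega$. Third, on each such centralizer, select a linear character that is a tensor product of (i) a sign character on each $\ZZ_2$-factor and (ii) the restriction to the $G(r',2,q',n')$-piece of one of the parabolic linear characters $\lambda^{z,\epsilon}$ or $\nu^{w,\epsilon}$ used in the proof of Theorem \ref{n=2-cl}. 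Finally, verify through Frobenius reciprocity, using the parametrisation of $\Irr(G)$ by multipartitions in \cite[\S 6]{Ca1}, that every irreducible character of $G$ occurs as a constituent of exactly one of the induced characters $\Ind_{H_i}^G(\lambda_i)$.

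The main obstacle is the global bookkeeping needed to make the induced characters partition $\Irr(G)$, owing to the interplay between the central character through which each irreducible factors and the splitting of representations of $G(r,1,q,n)$ upon restriction to $G$. Already for $n=2$, the proof of Theorem \ref{n=2-cl} required splitting into three subcases according to the parities of $r/2$ and $r/q$; the general even-$n$ case should demand a more elaborate case analysis on the parities of $r/p$, $r/q$, $n/2$ and $q/2$, most likely combined with an induction on $n$. A natural testing ground is the group $G(2,2,2,6)$, already verified by computer to have a GIM; extracting from that example a uniform description of the data $(H_i,\lambda_i)$ is, in my view, the crux of settling the conjecture in full generality.
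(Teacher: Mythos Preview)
The statement you are addressing is a \emph{conjecture} in the paper, not a theorem: the paper offers no proof of it, only the remark that ``to this statement we do not yet have any counterexamples.'' So there is no proof in the paper to compare your proposal against.

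Your reductions are sound. The case analysis via Corollary~\ref{cor2} is correct; the $n=2$ case is indeed handled by Theorem~\ref{n=2-thm}; and when $\gcd(p,n)=\gcd(q,n)=1$ the isomorphism $G\cong G(r,pq,1,n)$ is valid (note that $\gcd(pq,n)=1$ together with $pq\mid rn$ forces $pq\mid r$, so the target group is well-defined) and Theorem~\ref{thm1} applies. The normalisation to $p=2$ via Theorem~\ref{isomo} also goes through, since one checks that $\gcd(pq/2,n)=2$ whenever $\gcd(p,n)=\gcd(q,n)=2$. So you have correctly isolated the open core of the conjecture: the groups $G(r,2,q,n)$ with $n\ge 4$ even and $\gcd(q,n)=2$.

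What remains is, as you yourself say, a plan rather than a proof. The outline---enumerate $\tau$-twisted classes, compute centralizers, choose linear characters by analogy with Section~\ref{n=2-sect}, and verify coverage of $\Irr(G)$ by Frobenius reciprocity---is the natural approach and is consistent with how the paper handles the $n=2$ case. But the paper does not carry it out for $n\ge 4$, and neither does your proposal: the ``global bookkeeping'' you flag as the main obstacle is precisely the missing content. One additional caution: your appeal to Theorem~\ref{f} guarantees the degree count $|\cI_{G,\tau}|=\sum_\psi \psi(1)$, but to conclude that a GIM with respect to $\tau$ exists (rather than with respect to some other automorphism) you are implicitly committing to the choice $\nu=\tau$; in the subcase $r\equiv p\equiv q\equiv n\equiv 2\pmod 4$ the group can have class-preserving outer automorphisms (Proposition~\ref{no-class-prop}), so Lemma~\ref{5.1sub} does not automatically reduce an arbitrary GIM to one with respect to $\tau$. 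This does not break your strategy---you are constructing, not reducing---but it means that if the $\tau$-construction were to fail in some case, you could not immediately conclude that no GIM exists.

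In short: your proposal is a reasonable research programme for attacking an open conjecture, with correct preliminary reductions, but it is not a proof, and the paper contains no proof to compare it to.
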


The preceding two conjectures seem to cover all the cases in which we know that $G(r,p,q,n)$ has a GIM, and so one is tempted to put forth the following much stronger conjecture. It seems intuitively desirable that a statement of this type hold, but admittedly we do not  have a lot of evidence to support it.

\begin{conj} If $(r,p,q,n)$ is not one of a finite number of exceptions, then $G(r,p,q,n)$ has a GIM  if and only if  (i) the group has no split representations and no antisymmetric absolute involutions
or (ii) the group is self-dual and $\gcd(p,n) \leq 2$.
\end{conj}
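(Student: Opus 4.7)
The conjecture combines the two earlier conjectures of Section \ref{last-sect}, and I would attack it by separating the \emph{if} and \emph{only if} directions, treating $\gcd(p,n)\geq 3$ and $\gcd(p,n)\leq 2$ with different tools. The necessity direction when $\gcd(p,n)\geq 3$ is already almost complete: Theorem \ref{thm3} rules out GIMs for $\gcd(p,n)\geq 5$, isolates four exceptional groups at $\gcd(p,n)=3$, and reduces $\gcd(p,n)=4$ to the mod-$8$ congruence. To finish this direction I would push the counting argument of Theorem \ref{main-thm} further in the $\gcd(p,n)=4$ setting, using Lemma \ref{vconginv} to force the GIM automorphism into the form $\Ad(g)\circ\tau$ and then sharpening the inequality $|\cI_{G,\nu}|\leq |\cI_{G,\tau}|$ into an effective obstruction that fails for all but finitely many groups of the form $G(8r'+4,4,8q'+4,8n'+4)$, presumably by exploiting the split representations and the kernel of the putative induced character in the spirit of Proposition \ref{part2-prop}.

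The necessity direction for $\gcd(p,n)\leq 2$ is more subtle. Proposition \ref{part2-prop} handles the subcase $\gcd(p,n)=2$ with $q$ odd; what remains is to show that if $\gcd(p,n)\in\{1,2\}$, $q$ is even, $G$ is not self-dual, and antisymmetric absolute involutions exist, then no GIM can be supported on $\cI_{G,\tau}$. I would try to mimic Baddeley's obstruction from \cite{B91} by locating a distinguished pair of $\tau$-twisted centralizers, conjugate in $G$ but distinct as twisted-class representatives, and show that such a pair forces a linear-character selection forbidden by Frobenius reciprocity. The self-duality hypothesis in (ii) should be exactly what allows one to pair up or identify such centralizers, dissolving the obstruction; identifying a clean group-theoretic invariant that detects self-duality in this way is the heart of the difficulty.

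The sufficiency direction splits similarly. For condition (ii), with $G$ self-dual and $\gcd(p,n)\leq 2$, I would use the explicit isomorphism $\varphi$ of Section \ref{iso3-sect} to transport a GIM from $G(r,q,1,n)$ (which has a GIM by Theorem \ref{thm1}) to $G^*=G(r,q,p,n)\cong G$. The technical step is to verify that the hypotheses of Proposition \ref{quotient-prop} are preserved under $\varphi$, so the GIM on $G(r,q,1,n)$ descends through the quotient by $C_p$. Under condition (i), Theorem \ref{f} gives $|\cI_{G,\tau}|=\sum_{\psi\in\Irr(G)}\psi(1)$, and the absence of antisymmetric absolute involutions means we only need to realize twisted centralizers of symmetric involutions. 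Following the recipe used for $G(r,2,q,2)$ in Section \ref{n=2-sect}, I would enumerate representatives of the $\tau$-twisted conjugacy classes via Proposition \ref{grpnconjugacy}, describe each twisted centralizer explicitly, and use the character parametrization in \cite[\S 6]{Ca1} together with Frobenius reciprocity to pick a linear character on each centralizer whose induced characters together give a multiplicity-free sum equal to $\sum_{\psi\in\Irr(G)}\psi$.

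The main obstacle will be the sufficiency direction for condition (i) at general $n$. In rank two the centralizers are essentially products of copies of $S_2$ and the bookkeeping of Section \ref{n=2-sect} is tractable; for larger $n$ the $\tau$-twisted centralizers decompose as products of smaller-rank projective reflection groups indexed by colored-cycle type, and selecting linear characters that induce coherently to the full sum of irreducibles requires a nontrivial combinatorial matching analogous to (but more intricate than) the Gelfand-model constructions of \cite{APR}. A secondary but equally serious obstacle is effectively bounding the finite exceptional set in the $\gcd(p,n)=4$ regime: the fact that $G(4,4,4,4)$ has a GIM while $G(12,4,4,4)$ and $G(12,12,4,4)$ do not shows that the exceptional set is nontrivial, and controlling it rigorously is likely to require a structural insight beyond the character-theoretic inequalities developed here.
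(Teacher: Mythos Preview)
The statement you are addressing is a \emph{conjecture}, not a theorem: the paper does not prove it and offers no proof to compare against. What the paper provides is computational evidence (verification for all groups of order less than forty thousand, up to an explicit list of eight exceptions) and an informal discussion of why the two conditions (i) and (ii) are plausible. Your proposal is therefore not a proof attempt in the usual sense but a sketch of a research program for an open problem; it should be read and assessed as such.

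That said, your outline contains at least one concrete error worth flagging. In the sufficiency direction for condition (ii) you propose to transport a GIM from $G(r,q,1,n)$, asserting that this group has a GIM by Theorem~\ref{thm1}. But Theorem~\ref{thm1} (together with its remark) says $G(r,q,1,n)$ has a GIM if and only if $\gcd(q,n)=1$ when $n\neq 2$. Self-duality of $G(r,p,q,n)$ forces $\gcd(q,n)=\gcd(p,n)$, so in the case $\gcd(p,n)=2$ your source group $G(r,q,1,n)$ does \emph{not} have a GIM, and there is nothing to transport. Even when $\gcd(p,n)=1$, the descent via Proposition~\ref{quotient-prop} as in Corollary~\ref{part1-cor} is only established for odd quotient parameter, so you would still need new arguments to handle even $p$. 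More broadly, the paper's own commentary makes clear that the sufficiency direction for self-dual groups with $\gcd(p,n)=2$ (and for the $\gcd(p,n)=4$ regime) is precisely where the existing tools run out; your proposal does not supply the missing structural idea, and the authors themselves stop short of claiming one.
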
 

This conjecture is appealing because it treats the cases $n=2$ and $n\neq 2$ simultaneously. (By Corollary \ref{equiv-cor} the conjecture coincides with Theorem \ref{n=2-thm} when  $n=2$.)
Our calculations show that among the groups $G(r,p,q,n)$ with order less than forty thousand, the conjecture holds provided $(r,p,q,n)$ is not one of the eight exceptions 
\[\ba&\text{$(3,3,3,3)$ or $(6,3,3,3)$ or $(6,3,6,3)$ or $(6,6,3,3)$ or} \\ &\text{$(4,1,2,2)$ or $(2,1,2,4)$ or $(4,4,4,4)$ or $(8,2,4,4)$.}\ea\] The groups corresponding to these cases all have GIMs but do not satisfy the conditions in the conjecture, and so seem to represent the ``right'' kind of exceptions.

Of course, beyond simply proving these conjectures, one desires an explanation for why self-duality or the lack of split representations and antisymmetric absolute involutions accounts for the existence of  generalized involution models

\end{document}